\newtheorem{theo}{Theorem}
\newtheorem{prop}{Proposition}[section]
\newtheorem{coro}[prop]{Corollary}
\newtheorem{lemma}[prop]{Lemma}
\newtheorem{conj}[prop]{Conjecture}
\theoremstyle{definition}
\newtheorem{example}[prop]{Example}
\newcommand{\bft}{{\mathbf t}}
\newcommand{\bt}{{\mathbf t}}
\newcommand{\bT}{{\mathbf T}}
\newcommand{\bX}{{\mathbf X}}
\newcommand{\bfp}{{\mathbf p}}
\newcommand{\emptyplug}{{\bfp_\circ}}
\newcommand{\fullplug}{{\bfp_\bullet}}
\newcommand{\plug}{\operatorname{plug}}
\newcommand{\Prob}{\operatorname{Prob}}
\newcommand{\nvert}{\operatorname{vert}}
\newcommand{\interior}{\operatorname{int}}
\newcommand{\NN}{{\mathbb{N}}}
\newcommand{\ZZ}{{\mathbb{Z}}}
\newcommand{\QQ}{{\mathbb{Q}}}
\newcommand{\RR}{{\mathbb{R}}}
\newcommand{\CC}{{\mathbb{C}}}
\newcommand{\Ss}{{\mathbb{S}}}
\newcommand{\EE}{{\mathbb{E}}}
\newcommand{\cT}{{\cal T}}
\newcommand{\cF}{{\cal F}}
\newcommand{\cC}{\mathcal{C}}
\newcommand{\cD}{{\cal D}}
\newcommand{\cP}{{\cal P}}
\newcommand{\cR}{{\cal R}}
\newcommand{\cG}{{\cal G}}
\newcommand{\Hom}{\operatorname{Hom}}
\newcommand{\Tw}{\operatorname{Tw}}
\begin{document}
\title{Domino tilings of cylinders: \\
connected components under flips \\
and normal distribution of the twist}
\author{Nicolau C. Saldanha}

\maketitle

\begin{abstract}
We consider domino tilings of $3$-dimensional cubiculated regions.
A three-dimensional domino is a $2\times 1\times  1$
rectangular cuboid.
We are particularly interested in regions of the form
$\cR_N = \cD \times [0,N]$ where
$\cD \subset \RR^2$ is a fixed quadriculated disk.
In dimension $3$,
the twist associates to each tiling $\bt$ an integer $\Tw(\bt)$.
We prove that, when $N$ goes to infinity,
the twist follows a normal distribution.

A flip is a local move: two neighboring parallel dominoes
are removed and placed back in a different position.
The twist is invariant under flips.
A quadriculated disk $\cD$ is regular if,
whenever two tilings $\bt_0$ and $\bt_1$ of $\cR_N$
satisfy $\Tw(\bt_0) = \Tw(\bt_1)$,
$\bt_0$ and $\bt_1$ can be joined by a sequence of flips
provided some extra vertical space is allowed.
Many large disks are regular, including rectangles 
$\cD = [0,L] \times [0,M]$ with $LM$ even and $\min\{L,M\} \ge 3$.
For regular disks, we describe the larger connected components
under flips of the set of tilings of the region
$\cR_N = \cD \times [0,N]$.
As a corollary,
let $p_N$ be the probability that two random tilings
$\bT_0$ and $\bT_1$ of $\cD \times [0,N]$
can be joined by a sequence of flips
conditional to their twists being equal.
Then $p_N$ tends to $1$ if and only if $\cD$ is regular.


Under a suitable equivalence relation,
the set of tilings has a group structure,
the {\em domino group} $G_{\cD}$.
These results illustrate the fact that
the domino group dictates many properties of the space of tilings
of the cylinder $\cR_N = \cD \times [0,N]$, particularly for large $N$.
\end{abstract}

\footnotetext{2010 {\em Mathematics Subject Classification}.
Primary 05B45; Secondary 52C20, 52C22, 05C70.
{\em Keywords and phrases} Three-dimensional tilings,
dominoes, dimers}


\section{Introduction}

A quadriculated region $\cD \subset \RR^2$
is a {\em planar quadriculated disk}
if $\cD$ is the union of finitely many closed unit squares
with vertices in $\ZZ^2$ and $\cD$ is homeomorphic to the closed unit disk.
We always assume that our quadriculated regions $\cD$ are {\em balanced}
(equal number of white and black unit squares;
squares come with a checkerboard coloring).
A quadriculated disk $\cD$ is {\em nontrivial}
if it has at least $6$ unit squares and
at least one square has at least three neighbours.
A {\em domino} is the union of two adjacent unit squares.
A {\em (domino) tiling} of $\cD$ is a set of dominoes
with disjoint interiors whose union is $\cD$;
the set of domino tilings of $\cD$ is denoted by $\cT(\cD)$.
From a graph-theoretical perpective,
we identify $\cD$ with a simple graph whose vertices are unit squares
(or, alternatively, their centers)
and whose edges are dominoes
(or, alternatively, unit segments joining the centers of two adjacent squares).
In this language, a tiling is a perfect matching.

Similarly, a cubiculated region is a set $\cR \subset \RR^3$ 
which is a union of finitely many unit cubes with vertices in $\ZZ^3$.
A  {\em (3D) domino} is the union of two adjacent unit cubes
and a {\em tiling} (of $\cR$) is a set of dominoes with disjoint interiors
whose union is $\cR$.
A {\em cylinder} is a cubiculated region of the form
$\cR_N = \cD \times [0,N]$ where $\cD$ is a quadriculated disk.
Let $\cT(\cR)$ be the set of domino tilings of $\cR$.
Again, $\cR$ can be identified with a bipartite graph,
dominoes with edges and tilings with perfect matchings.
We follow \cite{primeiroartigo,segundoartigo,regulardisk}
in drawing tilings of cubiculated regions by floors,
as in Figure \ref{fig:flip}.
Vertical dominoes (i.e., dominoes in the $z$ direction)
appear as two squares; we leave the right square unfilled.

A flip is a local move in either $\cT(\cD)$ or $\cT(\cR)$
from one tiling to another:
remove two adjacent and parallel dominoes and
place them back in a different position.
An example of a flip is shown in Figure \ref{fig:flip}.
If $\cD \subset \RR^2$ is a quadriculated disk
then $\cT(\cD)$ is connected under flips
(\cite{thurston1990}, \cite{saldanhatomei1995}).
For $\bt_0, \bt_1 \in \cT(\cR)$, we write $\bt_0 \approx \bt_1$
if $\bt_0$ and $\bt_1$ are in the same connected component under flips,
i.e., if they can be joined by a finite sequence of flips.
One of our aims is to study the connected components
of $\cT(\cR)$ under flips.

\begin{figure}[ht]
\centering
\def\svgwidth{120mm}
\begingroup%
  \makeatletter%
  \providecommand\color[2][]{%
    \errmessage{(Inkscape) Color is used for the text in Inkscape, but the package 'color.sty' is not loaded}%
    \renewcommand\color[2][]{}%
  }%
  \providecommand\transparent[1]{%
    \errmessage{(Inkscape) Transparency is used (non-zero) for the text in Inkscape, but the package 'transparent.sty' is not loaded}%
    \renewcommand\transparent[1]{}%
  }%
  \providecommand\rotatebox[2]{#2}%
  \newcommand*\fsize{\dimexpr\f@size pt\relax}%
  \newcommand*\lineheight[1]{\fontsize{\fsize}{#1\fsize}\selectfont}%
  \ifx\svgwidth\undefined%
    \setlength{\unitlength}{1276.21492748bp}%
    \ifx\svgscale\undefined%
      \relax%
    \else%
      \setlength{\unitlength}{\unitlength * \real{\svgscale}}%
    \fi%
  \else%
    \setlength{\unitlength}{\svgwidth}%
  \fi%
  \global\let\svgwidth\undefined%
  \global\let\svgscale\undefined%
  \makeatother%
  \begin{picture}(1,0.15695206)%
    \lineheight{1}%
    \setlength\tabcolsep{0pt}%
    \put(0,0){\includegraphics[width=\unitlength,page=1]{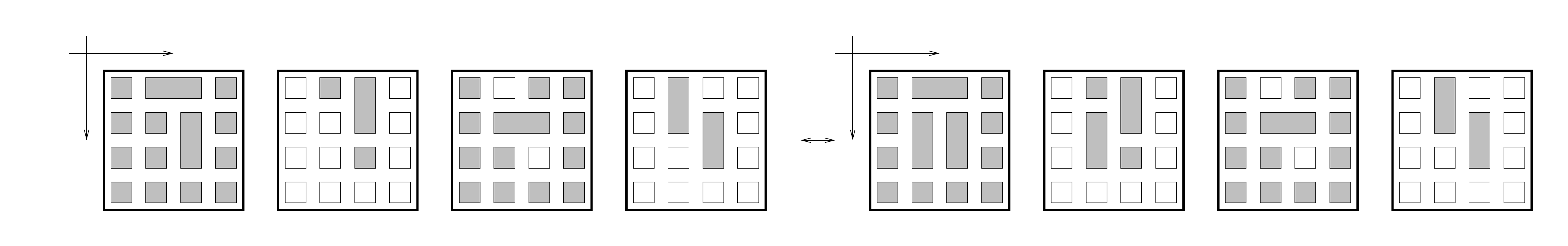}}%
    \put(0.02189859,0.06737734){\color[rgb]{0,0,0}\makebox(0,0)[lt]{\lineheight{1.25}\smash{\begin{tabular}[t]{l}$x$\end{tabular}}}}%
    \put(0.11741356,0.12957313){\color[rgb]{0,0,0}\makebox(0,0)[lt]{\lineheight{1.25}\smash{\begin{tabular}[t]{l}$y$\end{tabular}}}}%
  \end{picture}%
\endgroup%

\caption{Two tilings of the box $[0,4]\times [0,4]\times [0,4]$,
joined by a flip.}
\label{fig:flip}
\end{figure}

For a fixed balanced quadriculated disk $\cD$,
let $\cR_N$ be the cylinder $\cD \times [0,N]$.
Two tilings $\bt_0 \in \cT(\cR_{N_0})$ and $\bt_1 \in \cT(\cR_{N_1})$
can be concatenated to define a tiling
$\bt_0 \ast \bt_1 \in \cT(\cR_{N_0+N_1})$.
For $N$ even, there exists a tiling $\bt_{\nvert,N} \in \cT(\cR_N)$
with all dominoes vertical.
Assume $N_0 \equiv N_1 \pmod 2$ and $\bt_i \in \cT(\cR_{N_i})$;
we write $\bt_0 \sim \bt_1$ if and only if there exists $N \equiv N_0 \pmod 2$,
$N \ge \max\{N_0,N_1\}$ such that
we have $\bt_0 \ast \bt_{\nvert,N-N_0} \approx \bt_1 \ast \bt_{\nvert,N-N_1}$.
Given a balanced quadriculated disk $\cD$,
we define the {\em (full) domino group $G_{\cD}$}:
its elements are $\sim$-equivalence classes of tilings
and its operation is $\ast$, the concatenation.
In \cite{regulardisk,4domino} we construct a finite $2$-complex $\cC_{\cD}$
for which $G_{\cD} = \pi_1(\cC_{\cD})$ is the fundamental group,
and therefore finitely presented.
We review the construction of the domino group
in Section \ref{section:review}.

Given a tiling $\bt$ of $\cR_N$,
we define in \cite{FKMS, primeiroartigo,segundoartigo,regulardisk}
the {\em twist} $\Tw(\bt) \in \ZZ$.
A general definition is too complicated to be given here.
Example \ref{example:twist} gives a definition of the twist for cylinders.
One of the most fundamental properties of the twist
is that it is preserved by flips,
so that $\bt_0 \approx \bt_1$ implies $\Tw(\bt_0) = \Tw(\bt_1)$.
There are other local moves
(such as the {\em trit})
which change the twist in a predictable way.
We have $\Tw(\bt_{\nvert,N}) = 0$ (for all $N$) and
$\Tw(\bt_0 \ast \bt_1) = \Tw(\bt_0) + \Tw(\bt_1)$.
The {twist}
is therefore a group homomorphism $\Tw: G_{\cD} \to \ZZ$.
This homomorphism is surjective for nontrivial $\cD$.
For most of the paper we only need these properties.

A quadriculated disk $\cD$ is {\em regular} if and only if
for any tilings $\bt_0, \bt_1$ of $\cR_N$
$\Tw(\bt_0) = \Tw(\bt_1)$ implies $\bt_0 \sim \bt_1$
(but not necessarily $\bt_0 \approx \bt_1$).
It is not hard to check that $\cD$ is regular
if and only if $G_{\cD} \approx \ZZ \oplus (\ZZ/(2))$
(the first coordinate is the twist,
the second one is the parity of $N$).
A vague conjecture is that ``large'' quadriculated disks are regular.
In Theorems \ref{theo:rectangle} and \ref{theo:M}
we restate the main results of \cite{regulardisk}
with some extra information (which is also proved in that paper).
Here, $F_2$ is the free group in $2$ generators $a$ and $b$
and $F_2 \ltimes \ZZ/(2)$ is the semidirect product
defined by the involution $a \mapsto b^{-1} \mapsto a$.

\begin{theo}
\label{theo:rectangle}\cite{regulardisk}
Let $\cD = [0,L] \times [0,M]$ be a rectangle with $LM$ even.
Then $\cD$ is regular if and only if $\min\{L,M\} \ge 3$.
If $L = 2$ and $M > 2$ then there exists a surjective homomorphism
$G_{\cD} \to F_2 \ltimes \ZZ/(2)$;
in particular, $G_{\cD}$ has exponential growth.
\end{theo}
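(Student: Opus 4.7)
My plan is to split the stated equivalence into its two implications and to derive the failure of regularity for $L=2$ from the exponential-growth statement in the last sentence.

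For the direction ``$\min\{L,M\} \ge 3 \Rightarrow \cD$ is regular'', recall from the paragraph preceding the theorem that the twist together with the parity of $N$ always defines a surjective homomorphism $G_{\cD} \to \ZZ \oplus (\ZZ/(2))$, so regularity is equivalent to injectivity of this map. I would work with the $2$-complex $\cC_{\cD}$ of \cite{regulardisk,4domino}, for which $G_{\cD} = \pi_1(\cC_{\cD})$, and produce a finite presentation whose generators correspond to ``slab'' tilings of $\cD \times [0,k]$ for a few small values of $k$. The relations would come from flips (which preserve $\sim$) and from trit-type moves (which change the twist by a controlled integer). When $\min\{L,M\} \ge 3$ the rectangle is wide enough to perform flips and trits at arbitrary positions, and the expectation is that one can reduce any word in the generators to a normal form $(\Tw(\bt),\epsilon)$, yielding the required injectivity.

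For the converse direction, $L=2$ and $M > 2$, the plan is to construct the surjection $\varphi: G_{\cD} \to F_2 \ltimes (\ZZ/(2))$ claimed in the last sentence of the theorem; any group admitting such a surjection has exponential growth (since $F_2$ does), and so cannot be isomorphic to $\ZZ \oplus (\ZZ/(2))$, proving that $\cD$ is not regular. Tilings of $[0,2]\times[0,M]\times[0,N]$ are highly rigid, and I would try to read off from each tiling a word in two letters $a,b$ by scanning the cylinder in the vertical direction and recording, for each ``slab transition'', which of a small number of local interface patterns appears. The $\ZZ/(2)$ factor should encode the parity of $N$ together with a reflectional symmetry of $\cD$ that swaps the two interface types, which accounts for the involution $a \mapsto b^{-1}$, $b \mapsto a^{-1}$. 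One then checks that $\varphi$ is well defined on $\sim$-classes (i.e., invariant under flips and under concatenation with $\bt_{\nvert,N}$), that $\varphi$ is a homomorphism with respect to $\ast$, and that $\varphi$ is surjective by exhibiting tilings that realize the generators $a,b$.

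The main obstacle is the construction of $\varphi$ and the verification of its flip-invariance: the invariant must be local enough to be preserved by every flip, yet global enough to separate infinitely many $\sim$-classes with equal twist. Subtle behavior is expected near the caps $z=0$ and $z=N$, so I would work modulo concatenation with $\bt_{\nvert,N}$ (harmless since it represents the identity in $G_{\cD}$) and verify flip invariance case by case on the short list of local slab configurations available when $L=2$. Conversely, for the ``if'' direction the main technical challenge is to show that the flip and trit relations suffice to kill every nontrivial commutator in the presentation of $G_{\cD}$; this is precisely where the width hypothesis $\min\{L,M\} \ge 3$ is needed, since it provides the room to perform the local moves on which the relations depend.
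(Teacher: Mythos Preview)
The present paper does not contain a proof of this theorem: it is quoted from \cite{regulardisk} (see the sentence immediately preceding the statement), so there is no ``paper's own proof'' to compare against here. What you have written is not a proof but a strategy sketch, and the sketch leaves the actual content of both directions unaddressed.

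For the direction $\min\{L,M\}\ge 3 \Rightarrow$ regular, your entire argument is the sentence ``the expectation is that one can reduce any word in the generators to a normal form $(\Tw(\bt),\epsilon)$''. That reduction \emph{is} the theorem; asserting that one expects it to work is not a proof. The actual argument in \cite{regulardisk} is substantially more involved than ``there is room to perform local moves'': it requires a careful analysis of how tilings of $\cD\times[0,N]$ decompose into pieces and how those pieces can be rearranged using flips alone (not trits, which are not $\sim$-equivalences). Your plan to use ``trit-type moves'' as relations is in fact wrong: trits change the twist by $\pm 1$ and do not preserve $\sim$, so they cannot appear as relations in a presentation of $G_\cD$.

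For the $L=2$ direction, you correctly identify that building the surjection $\varphi$ is the point, and your outline (scan vertically, record interface types, check flip-invariance) is in the right spirit. But ``I would try to read off\ldots a word in two letters'' and ``one then checks'' are again placeholders for the real work: identifying precisely which local data to record so that the resulting word is flip-invariant, and verifying surjectivity onto the nonabelian free factor. Until those constructions are actually carried out, this remains a plan rather than a proof.
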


\begin{theo}
\label{theo:M}\cite{regulardisk}
Let $\cD$ be a regular quadriculated disk containing a $2\times 3$ rectangle.
Then there exists $M$ (depending on $\cD$ only)
such that for all $N \in \NN$ and for all $\bt_0, \bt_1 \in \cT(\cR_N)$
if $\Tw(\bt_0) = \Tw(\bt_1)$
then $\bt_0 \ast \bt_{\nvert,M} \approx \bt_1 \ast \bt_{\nvert,M}$.
\end{theo}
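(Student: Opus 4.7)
Since $\cD$ is regular, the characterization $G_\cD \cong \ZZ \oplus \ZZ/(2)$ recalled in the introduction already ensures that $\bt_0$ and $\bt_1$ represent the same element of $G_\cD$: the twist provides the first coordinate, and the common value $N \bmod 2$ provides the second. Thus $\bt_0 \sim \bt_1$, which by definition yields \emph{some} $M \ge 0$ with $\bt_0 \ast \bt_{\nvert,M} \approx \bt_1 \ast \bt_{\nvert,M}$ (with $M$ even so that $\bt_{\nvert,M}$ exists). The content of Theorem \ref{theo:M} is that $M$ can be chosen depending on $\cD$ alone.

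The plan is to introduce, for each $N \in \NN$ and each admissible twist $t \in \ZZ$, a \emph{canonical tiling} $\bfk_{N,t} \in \cT(\cR_N)$ with $\Tw(\bfk_{N,t}) = t$, and to establish the following uniform reduction lemma: there exists $M_0 = M_0(\cD)$ such that every $\bt \in \cT(\cR_N)$ satisfies
\[
\bt \ast \bt_{\nvert,M_0} \approx \bfk_{N+M_0,\Tw(\bt)}.
\]
The theorem will then follow from the lemma by transitivity with $M = M_0$. To build the canonical forms, I would first fix a \emph{twist generator} $\bff \in \cT(\cR_{N_\circ})$ with $\Tw(\bff) = 1$; such an $\bff$ exists because the $2\times 3$ rectangle inside $\cD$ can be used to place a trit-like configuration of fixed height, with vertical dominoes filling the rest of the floor. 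A canonical tiling of twist $t$ is then obtained by concatenating $|t|$ copies of $\bff$ (or of a tiling $\sim$-inverse to $\bff$, for $t<0$) together with enough layers of $\bt_{\nvert,\cdot}$ to reach height $N$.

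For the reduction lemma, I would use the finite $2$-complex $\cC_\cD$ from \cite{regulardisk,4domino} with $\pi_1(\cC_\cD) = G_\cD$, reviewed in Section \ref{section:review}. Tilings of cylinders correspond to combinatorial paths in $\cC_\cD$; flips correspond to traversals of $2$-cells; and concatenation with $\bt_{\nvert,\cdot}$ corresponds to inserting trivial edges. Because $\cC_\cD$ has only finitely many $2$-cells and $G_\cD \cong \ZZ \oplus \ZZ/(2)$ is as simple as possible, any two paths representing the same group element can be related by a homotopy that uses each $2$-cell at most a bounded number of times per unit of path length, and each $2$-cell traversal costs only a bounded amount of extra vertical space, drawn from the available $\bt_{\nvert,M_0}$.

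The main obstacle will be to show that these $2$-cell traversals can be arranged \emph{locally} so that the extra space $M_0$ does not need to grow with $N$. A naive implementation borrows extra layers for every local modification and accumulates $\Theta(N)$ extra height. The crucial point, for which the $2\times 3$ rectangle hypothesis pays off, is that after each local move the previously borrowed layers can be restored to the standard vertical form by flips supported in a bounded-height window, thereby \emph{recycling} the extra space; the $2\times 3$ rectangle provides the room needed to undo the move without interfering with the rest of the cylinder. Making this recycling precise, and showing that a single value of $M_0$ suffices across all stages of the reduction, is the crux of the argument; once done, $M_0$ can be taken to depend only on the finitely many $2$-cells of $\cC_\cD$ and on the height $N_\circ$ of the twist generator $\bff$.
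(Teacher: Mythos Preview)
This theorem is not proved in the present paper: as the introduction states explicitly, Theorems \ref{theo:rectangle} and \ref{theo:M} are restatements of the main results of \cite{regulardisk}, and the proof lives there. So there is no ``paper's own proof'' here to compare against; your proposal has to be assessed on its own merits.

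On those merits, the proposal is an outline that correctly locates the difficulty but does not resolve it. You rightly observe that regularity gives $\bt_0 \sim \bt_1$ and that the entire content of the theorem is the \emph{uniformity} of $M$; you also correctly diagnose the obstacle, namely that a naive homotopy in $\cC_\cD$ would consume $\Theta(N)$ extra vertical layers. But your proposed cure---``recycling'' the borrowed layers via moves supported in a bounded window using the $2\times 3$ rectangle---is asserted, not proved. You write that ``making this recycling precise \ldots\ is the crux of the argument; once done, $M_0$ can be taken to depend only on \ldots''. That sentence is exactly the theorem: the whole point is to exhibit a concrete mechanism that returns the borrowed floors to vertical form after each local step, and to verify that this mechanism never needs more than a fixed number of spare floors regardless of $N$ or of the twist. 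Without that mechanism spelled out (which moves, in what order, and why the window stays bounded), the argument has not begun.

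There is also a secondary issue with your canonical tilings $\bfk_{N,t}$. You build them by concatenating $|t|$ copies of a height-$N_\circ$ generator $\bff$ and padding with vertical floors to reach height $N$. But $|t|$ can be as large as $c_\cD N$ (Lemma \ref{lemma:cD}), so $|t|\,N_\circ$ may exceed $N$, and the construction fails near the extremes of the twist range. Any canonical-form approach has to handle large $|t|$ differently, for instance by using tilings that realize a fixed positive twist \emph{per floor} rather than per block of height $N_\circ$.
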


Given $\cR$, a {\em random tiling} of $\cR$ is
a uniformly distributed random variable
$\bT: \Omega \to \cT(\cR)$.
The following conjecture is formulated for boxes
but should hold in greater generality.
There is some empirical evidence towards it
(but it is hard to tell how significant it is;
see Figure \ref{fig:tw60} for a sample).

\begin{conj}
\label{conj:main}
Consider the cubiculated box
$\cR = [0,L] \times [0,M] \times [0,N]$,
$L, M, N \in \NN^\ast$, $LMN$ even.
Let $\bT, \bT_0, \bT_1$ be independent random tilings of $\cR$.
\begin{enumerate}
\item{ As $\min(L,M,N) \to \infty$,
the real random variable
\[ \frac{1}{\sqrt{LMN}} \Tw(\bT) \]
converges in distribution to a normal distribution centered at $0$.}
\item{ Two tilings with the same twist can almost always
be joined by a finite sequence of flips:
\[ \lim_{\min(L,M,N) \to \infty}
\Prob[\bT_0 \approx \bT_1 | \Tw(\bT_0) = \Tw(\bT_1)]  = 1. \]
}
\end{enumerate}
\end{conj}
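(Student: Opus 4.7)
Since the final statement is explicitly a conjecture, my proposal is necessarily speculative; I sketch the approach I would pursue. The natural framework is the transfer-matrix and Markov-chain formalism for random tilings: a uniform random tiling of $\cR_N = \cD \times [0,N]$ is encoded by a Markov chain $(S_0, S_1, \ldots, S_N)$ in which $S_k$ is the configuration of half-dominoes crossing the horizontal cut at height $k$. For nontrivial $\cD$ this chain is irreducible and aperiodic on its finite state space, and it underpins both parts.

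For Part (1), I would begin with the cylinder regime: fix $\cD = [0,L] \times [0,M]$ and let $N \to \infty$ (this is in fact the case the paper's abstract announces). Because $\Tw(\bt_0 \ast \bt_1) = \Tw(\bt_0) + \Tw(\bt_1)$, one has $\Tw(\bT) = \sum_{k=1}^{N} \xi_k$, where each $\xi_k$ depends on $(S_{k-1}, S_k)$ together with the conditionally random tiling of the slab between heights $k-1$ and $k$. A standard Markov chain CLT (via martingale approximation or a spectral-gap argument) then yields $N^{-1/2}\Tw(\bT) \Rightarrow \mathcal{N}(0, \sigma_{\cD}^2)$ for some $\sigma_{\cD}^2 \ge 0$. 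To extend to the triple limit $\min(L,M,N) \to \infty$ one still needs $\sigma_{\cD}^2 \sim c\, LM$; I would attempt this by a sub/superadditivity argument as $\cD$ is subdivided into columns, together with correlation-decay estimates showing approximate independence of twist contributions from distant columns. A careful accounting is needed to ensure $c > 0$, i.e., that no systematic cancellation collapses the variance.

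For Part (2) I would exploit Theorems \ref{theo:rectangle} and \ref{theo:M}: for $\min(L,M) \ge 3$ the disk $\cD = [0,L] \times [0,M]$ is regular, so there exists $M^\ast = M^\ast(L,M)$ with
\[ \Tw(\bt_0) = \Tw(\bt_1) \;\Longrightarrow\; \bt_0 \ast \bt_{\nvert,M^\ast} \approx \bt_1 \ast \bt_{\nvert,M^\ast}. \]
The remaining task is to eliminate the external padding $\bt_{\nvert,M^\ast}$. The plan is to show that with probability tending to $1$ a random tiling $\bT$ of $\cR_N$ contains, somewhere in its interior, a slab of height $M^\ast$ tiled entirely by vertical dominoes: the transfer-matrix chain is irreducible and the all-vertical interface has positive stationary probability, so a union bound gives exponential decay (in $N$) of the probability that no such internal slab exists. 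Assuming this event for both $\bT_0$ and $\bT_1$, one would argue that the pre-existing internal $\bt_{\nvert,M^\ast}$ can play the role of the external pad in Theorem \ref{theo:M}, concluding $\bT_0 \approx \bT_1$ with probability $\to 1$ conditional on $\Tw(\bT_0) = \Tw(\bT_1)$.

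The hard part, I expect, is this last substitution. The flip sequence guaranteed by Theorem \ref{theo:M} acts on $\bt_i \ast \bt_{\nvert,M^\ast}$ in a way that need not respect the split into an internal piece and an external pad; converting it into a flip sequence manipulating only the interior of $\bT_i$ requires genuinely new input, presumably a \emph{portability lemma} saying that a thick vertical slab may be shifted through adjacent tilings by flips, or equivalently a strengthening of Theorem \ref{theo:M} in which the padding's location is flexible. For Part (1), by contrast, the cylinder case should be essentially routine given the Markov-chain framework; the rectangular asymptotics of $\sigma_{\cD}^2$ are where the real difficulty lies.
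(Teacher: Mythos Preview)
The statement is labeled a conjecture and the paper does not prove it; there is no proof in the paper to compare against. What the paper does establish is the cylinder special case (fix $\cD = [0,L]\times[0,M]$ and let $N\to\infty$), and for that regime your outline is essentially the paper's strategy, with two points worth noting.

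For Part (1) in the cylinder regime, your Markov-chain CLT is close in spirit, but the paper's actual route is more concrete: rather than invoking a general functional CLT, it works directly with the one-parameter family of Hermitian transfer matrices $\hat\alpha(t)$ obtained by weighting each floor $f$ by $e^{it\,d\xi(f)}$, shows via Perron--Frobenius and a contraction argument (Lemma \ref{lemma:contraction}) that the top eigenvalue satisfies $\eta_1(t) = 1 - a_2 t^2 + O(t^4)$ with $a_2>0$, and reads off uniform convergence of the characteristic functions $\varphi_{\Psi_N/\sqrt{N}}$ to a Gaussian. The scaling $\sigma_{\cD}^2 \sim c\,LM$ that you correctly identify as the real obstacle to the triple limit is not addressed; the full conjecture is listed among the open questions in Section \ref{section:final}.

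For Part (2) in the cylinder regime, your plan is exactly the paper's: Lemma \ref{lemma:manyverticalfloors} supplies the exponential bound on $\{\nvert(\bT) < CN\}$, and the ``portability lemma'' you flag as the hard part is already available as Proposition \ref{prop:giant} (a restatement of Corollary 12.1 of \cite{regulardisk}): if $\Tw(\bt_0)=\Tw(\bt_1)$ and $\nvert(\bt_i)\ge M$ for both $i$, then $\bt_0\approx\bt_1$ outright, with no external padding. The paper also remarks that vertical floors can be slid up and down by flips (Lemma 5.2 of \cite{regulardisk}), which is the mechanism behind this. So for fixed $\cD$ the step you worry about is already done; what remains genuinely open for the full conjecture is that $M = M(\cD)$ depends on $L,M$ in an uncontrolled way.
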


In this paper we prove related results for tall cylinders.
Thus, $L$ and $M$ are kept fixed while $N$ goes to infinity.
Also, the theorem is restricted to cylinders (not just boxes).
In this introduction we state simpler results,
with an emphasis on the main examples:
regular disks and the twist.
In Section \ref{section:review},
after reviewing the contents of \cite{regulardisk},
we state more general results.
The proofs are not significantly different.
Our first main result establishes normal distribution of the twist
for tilings of nontrivial cylinders.

\begin{theo}
\label{theo:normaltwist}
Let $\cD \subset \RR^2$ be a nontrivial quadriculated disk.
Let $\bT_N$ be a random tiling of $\cR_N = \cD \times [0,N]$.
There exist constants $C_0, C_1 \in (0,+\infty)$
with the property below.

If $(t_N)$ is a sequence of integers with
$\lim_{N \to \infty} t_N/\sqrt{N} = \tau \in \RR$ then
\[ \lim_{N \to \infty} \sqrt{N}\,\Prob[\Tw(\bT_N) = t_N]
= C_0 \exp(-C_1 \tau^2). \]
\end{theo}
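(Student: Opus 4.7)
The plan is to prove this local central limit theorem by setting up a transfer matrix along the $z$-axis, twisting it by the twist, and then applying Fourier inversion to the resulting characteristic function. First I would record the standard slab decomposition. A tiling $\bt \in \cT(\cR_N)$ determines, at each integer height $z \in \{0, 1, \ldots, N\}$, a \emph{plug} $p_z \subseteq \cD$ (the set of unit squares covered by a vertical domino crossing the plane at height $z$), subject to the boundary condition $p_0 = p_N = \emptyset$; the squares between heights $z$ and $z+1$ not occupied by plug squares are then matched by a \emph{floor tiling} $f_z$ of $\cD \smallsetminus (p_z \cup p_{z+1})$ by horizontal dominoes. Let $M$ be the nonnegative matrix whose rows and columns are indexed by plugs and whose $(p, p')$ entry counts floor tilings of $\cD \smallsetminus (p \cup p')$; then $|\cT(\cR_N)| = (M^N)_{\emptyset, \emptyset}$.

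Next I would form the twisted transfer matrix. Using the explicit formula from Example \ref{example:twist}, $\Tw(\bt)$ decomposes as
\[ \Tw(\bt) = \sum_{z=0}^{N-1} w(p_z, p_{z+1}, f_z) \]
for some bounded integer-valued $w$ (the identities $\Tw(\bt_{\nvert,N}) = 0$ and $\Tw(\bt_0 \ast \bt_1) = \Tw(\bt_0) + \Tw(\bt_1)$ ensure the boundary terms vanish under $p_0 = p_N = \emptyset$). Define the complex matrix
\[ (M(\theta))_{p, p'} = \sum_{f} e^{i\theta\, w(p, p', f)}, \]
so that the characteristic function is
\[ \EE\bigl[e^{i\theta\, \Tw(\bT_N)}\bigr] = \frac{(M(\theta)^N)_{\emptyset, \emptyset}}{(M^N)_{\emptyset, \emptyset}}. \]
Because $\cD$ is nontrivial, $M = M(0)$ is irreducible and aperiodic, so Perron--Frobenius furnishes a unique simple positive dominant eigenvalue $\lambda_0$ with a spectral gap. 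Analytic perturbation theory then produces a dominant eigenvalue $\lambda(\theta)$ of $M(\theta)$, analytic near $\theta = 0$, with $\lambda(0) = \lambda_0$; the reflection $z \mapsto N - z$ of the cylinder gives an involution of $\cT(\cR_N)$ sending $\Tw$ to $-\Tw$, whence $\Tw(\bT_N)$ is symmetrically distributed and $\lambda'(0) = 0$. Writing $\lambda(\theta) = \lambda_0 \bigl(1 - \half \sigma^2 \theta^2 + O(\theta^3)\bigr)$ and setting $\theta = s/\sqrt{N}$ gives pointwise $\EE\bigl[e^{is\, \Tw(\bT_N)/\sqrt{N}}\bigr] \to e^{-\sigma^2 s^2/2}$.

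The final step is Fourier inversion:
\[ \sqrt{N}\,\Prob[\Tw(\bT_N) = t_N] = \frac{1}{2\pi} \int_{-\pi\sqrt{N}}^{\pi\sqrt{N}} e^{-is\, t_N/\sqrt{N}}\, \EE\bigl[e^{is\, \Tw(\bT_N)/\sqrt{N}}\bigr]\, ds. \]
Splitting the range at $|s| = N^{1/6}$, the inner piece tends by dominated convergence to $\int_{\RR} e^{-is\tau}\, e^{-\sigma^2 s^2/2}\, ds = \sqrt{2\pi/\sigma^2}\; e^{-\tau^2/(2\sigma^2)}$, producing the desired limit with $C_0 = (2\pi\sigma^2)^{-1/2}$ and $C_1 = 1/(2\sigma^2)$; the outer piece is shown negligible using the aperiodicity bound discussed below.

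The whole argument rests on two non-degeneracy statements, and these are where the real work lies. The first is $\sigma^2 > 0$, which I would deduce from the existence of trits (local moves supported in a small cubic window that change the twist by $\pm 1$): they can be performed inside $\Theta(N)$ disjoint windows, forcing the variance of $\Tw(\bT_N)$ to grow linearly in $N$. The second, and clearly the main obstacle, is the aperiodicity estimate: for every fixed $\theta \in (-\pi, \pi) \smallsetminus \{0\}$ the spectral radius of $M(\theta)$ must be strictly less than $\lambda_0$, uniformly on compact subsets. Since $\Tw\colon G_{\cD} \to \ZZ$ is surjective for nontrivial $\cD$, closed transfer loops based at $\emptyset$ realise twist values whose greatest common divisor is $1$; this rules out the equality case in the entrywise Perron--Frobenius comparison $|M(\theta)| \le M(0)$ and yields the strict inequality $|\lambda(\theta)| < \lambda_0$. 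Promoting this qualitative statement to a quantitative tail bound strong enough to control the outer part of the inversion integral is the technically most delicate step of the proof.
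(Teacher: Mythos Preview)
Your approach is sound and would yield a complete proof, but it diverges from the paper's at the key step of upgrading the distributional CLT to the local one. Both you and the paper begin identically: set up the plug transfer matrix, twist it by the cocycle representing $\Tw$, and analyze the dominant eigenvalue $\lambda(\theta)$ via Perron--Frobenius and analytic perturbation; this gives the distributional CLT (the paper's Theorem~\ref{theo:normal}), and the aperiodicity bound you flag as delicate is exactly the paper's Lemma~\ref{lemma:supercork} together with item~1 of Lemma~\ref{lemma:contraction}. From there, however, you invoke Fourier inversion (the Nagaev--Guivarc'h spectral method), controlling the tail of the integral via the uniform spectral-radius bound. The paper instead avoids Fourier inversion entirely: it introduces an equivalence relation on $\cT(\cR_N)$ by swapping two fixed blocks $\bt_{\bullet,0},\bt_{\bullet,1}$ whose twists differ by $1$, observes that within each class $\Tw$ is binomially distributed (Equation~\eqref{equation:binomial}), and then sandwiches $\Prob[\Tw=t_N]$ between averages over short intervals $[t_N-\delta\sqrt{N},\,t_N+\delta\sqrt{N}]$ using elementary binomial estimates, obtaining matching $\limsup$ and $\liminf$ bounds (Lemmas~\ref{lemma:normallimsup} and~\ref{lemma:normalliminf}). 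Your route is the standard one in the Markov-chain local-CLT literature and is arguably shorter once the spectral estimates are in hand; the paper's route trades the quantitative tail analysis of the inversion integral for a somewhat intricate but purely combinatorial and probabilistic comparison argument, which also makes the role of surjectivity of $\Tw$ (the existence of the two blocks) very transparent. A small caveat: your variance argument via trits in $\Theta(N)$ disjoint windows is morally the same block-swapping idea, but as phrased it needs the extra step of conditioning on the complement of the windows to actually force linear variance growth; the paper gets $\eta_1''(0)<0$ directly from an invariant-set contraction estimate in Lemma~\ref{lemma:contraction}.
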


This roughly corresponds to a special case of
the first item of Conjecture \ref{conj:main}.
The normal distribution is illustrated in Figure \ref{fig:tw60};
there is a similar figure in \cite{regulardisk}.
Convergence in distribution follows from the statement,
but is easier and proved first in Theorem \ref{theo:normal},
stated in Section \ref{section:review} and
proved in Section \ref{section:normal}.
Theorem \ref{theo:normaltwist} is a special case of
Theorem \ref{theo:truenormal},
also to be stated in Section \ref{section:review} and proved
in Section \ref{section:truenormal}.

\begin{figure}[ht]
\begin{center}
\includegraphics[scale=0.75]{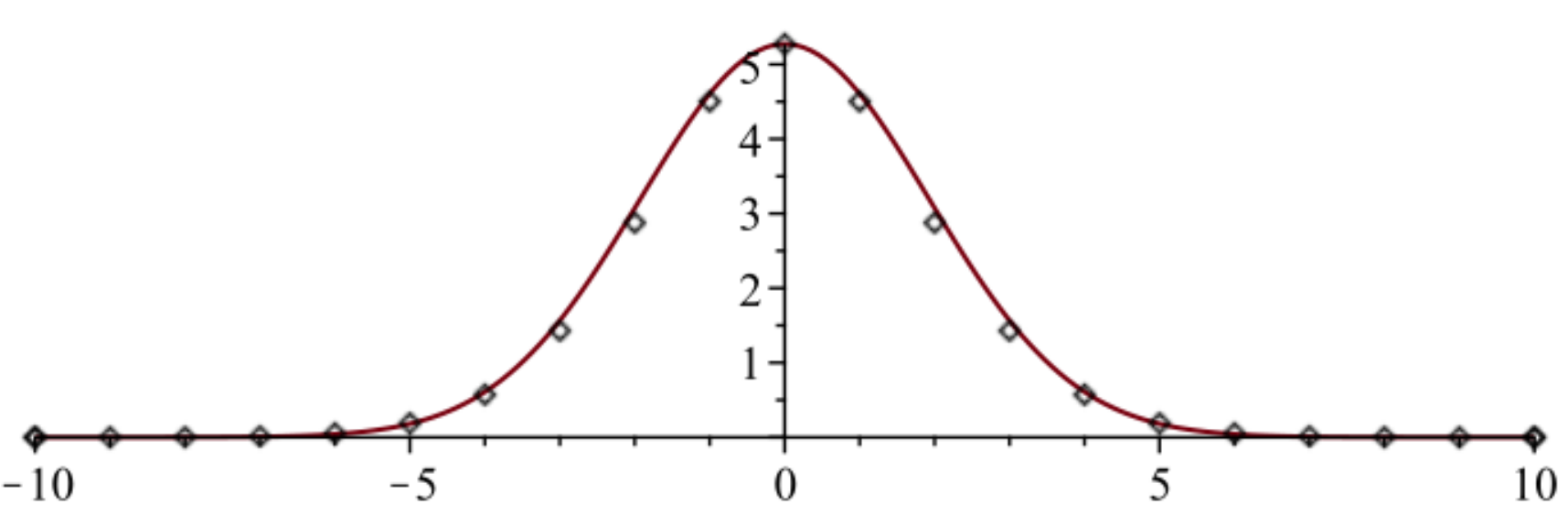}
\end{center}
\caption{The number of tilings per value of the twist
of the box $\cR_{60} = \cD \times [0,60]$, $\cD = [0,4]\times[0,4]$.
The solid curve is a true gaussian, shown for comparison.
Numbers on the vertical axis should be multiplied by $10^{156}$.}
\label{fig:tw60}
\end{figure}

The next result assumes $\cD \subset \RR^2$ to be regular.
We then give a description of $\approx$-connected components of $\cT(\cR_N)$.
We are particularly interested in the larger components.
Let $M$ be as in Theorem \ref{theo:M}.
A component $\cC \subseteq \cT(\cR_N)$ is {\em fat}
if it includes at least one tiling with at least $M$ vertical floors;
a component is {\em thin} otherwise.
Notice that vertical floors can be moved up and down via flips
(this is not hard, and follows from Lemma 5.2 in \cite{regulardisk}):
thus, in a fat component, there exists a tiling such that
the last $M$ floors are vertical.
For a component $\cC$ with $\bt \in \cC$, we write $\Tw(\cC) = \Tw(\bt)$.

\begin{theo}
\label{theo:components}
Let $\cD \subset \RR^2$ be a regular quadriculated disk.
There exists constants $c_{\cD} \in \QQ$, $b \in \RR$ and $\tilde c \in (0,1)$
such that, for all $N \in \NN$, the properties below hold.
\begin{enumerate}
\item{If $\bt$ is a tiling of $\cR_N$ then
$\Tw(\bt) \in \ZZ \cap [-c_{\cD}N, c_{\cD}N]$.}
\item{For $t \in \ZZ \cap [-c_{\cD}N+b,c_{\cD}N-b]$,
there exists exactly one fat component
$\cF_{N,t} \subseteq \cT(\cR_N)$ with $\Tw(\cF_{N,t}) = t$.}
\item{For $t \in \ZZ$, there exists at most
one fat component $\cF_{N,t}$  with $\Tw(\cF_{N,t}) = t$.}
\item{Let $\theta_N$ be the number of tilings in all thin components
of $\cT(\cR_N)$. Then, as $N$ goes to infinity,
$\theta_N = |\cT(\cR_N)| \, o(\tilde c^N)$.}
\end{enumerate}
\end{theo}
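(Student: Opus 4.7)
The plan is to handle items (1)--(3) as structural consequences of Theorem \ref{theo:M} together with the additivity of twist under concatenation, and to treat item (4) as the main analytic content via a transfer-matrix / subshift argument. Throughout, the basic tools are the identities $\Tw(\bt_0 \ast \bt_1) = \Tw(\bt_0) + \Tw(\bt_1)$ and $\Tw(\bt_{\nvert,N}) = 0$, together with the fact that vertical floors can be slid inside a tiling by flips (Lemma 5.2 of \cite{regulardisk}).

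For item (1), I would define $T_+(N) := \max\{\Tw(\bt) : \bt \in \cT(\cR_N)\}$. Additivity of twist under $\ast$ yields superadditivity $T_+(N_0+N_1) \ge T_+(N_0)+T_+(N_1)$, so by Fekete's lemma $c_\cD^+ := \lim T_+(N)/N = \sup_N T_+(N)/N$ exists; a direct count (bounding the contribution of each non-vertical domino pair) shows $T_+(N) = O(N)$, so $c_\cD^+ < \infty$. Rationality is obtained by showing the supremum is attained: starting from a tiling whose twist density is close to $c_\cD^+$ and padding it using Theorem \ref{theo:M}, one produces a genuinely periodic maximizer of some finite height $N^\ast$, whose density $T_+(N^\ast)/N^\ast$ is rational. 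A symmetric argument for $T_-$ gives $c_\cD = \max(c_\cD^+, -c_\cD^-) \in \QQ$.

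For item (3), given two fat components $\cC_0, \cC_1$ in $\cT(\cR_N)$ with the same twist $t$, choose representatives with at least $M$ vertical floors, slide those floors to the top by flips, and write $\bt_i \approx \bt'_i \ast \bt_{\nvert,M}$ with $\bt'_i \in \cT(\cR_{N-M})$. Since $\Tw(\bt'_0) = \Tw(\bt'_1) = t$, Theorem \ref{theo:M} yields $\bt'_0 \ast \bt_{\nvert,M} \approx \bt'_1 \ast \bt_{\nvert,M}$, so $\cC_0 = \cC_1$. For item (2), using the periodic maximizers from item (1) one concatenates suitable numbers of positive-rate, negative-rate, and vertical-floor blocks to realize every integer $t \in [-c_\cD N + b, c_\cD N - b]$ as the twist of some tiling of $\cR_{N-M}$; appending $\bt_{\nvert,M}$ yields a tiling in a (unique, by item (3)) fat component. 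The constant $b$ absorbs block sizes, parity adjustments, and the gap between $c_\cD^\pm$ and their attained rational approximations.

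The main obstacle is item (4), which requires a quantitative spectral estimate. I would identify tilings of $\cR_N$ with directed paths of length $N$ in a finite transfer graph $\cG$ whose vertices encode floor-to-floor interfaces; $|\cT(\cR_N)|$ grows like $\lambda^N$, where $\lambda$ is the Perron--Frobenius eigenvalue of the associated transfer matrix. The key claim is that there exists a finite family $\cP$ of local floor-profiles, each spanning a bounded number of floors, such that any tiling whose profile contains a copy of some $P \in \cP$ is flip-equivalent to one with at least $M$ vertical floors inside the region of $P$, hence lies in a fat component. Tilings in thin components therefore correspond to transfer-graph paths avoiding every pattern in $\cP$, and so form a proper subshift of finite type with spectral radius $\tilde\lambda$. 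The crux is the strict inequality $\tilde\lambda < \lambda$, which I would establish by exhibiting at least one $P \in \cP$ occurring along a recurrent trajectory inside the Perron-supporting strongly connected component of $\cG$, so that forbidding $P$ genuinely reduces exponential growth. Choosing $\tilde c$ with $\tilde\lambda/\lambda < \tilde c < 1$ then gives $\theta_N = O(\tilde\lambda^N) = |\cT(\cR_N)|\, o(\tilde c^N)$; explicitly constructing a fat-forcing pattern and certifying its subdominance is the hard part of the argument.
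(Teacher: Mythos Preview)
Your treatment of items (1)--(3) is essentially the paper's argument. For item (1) the paper simply quotes Lemma~11.1 of \cite{regulardisk} (restated here as Lemma~\ref{lemma:cD}), so your Fekete/periodic-maximizer sketch is a re-derivation of that cited result rather than new work; your use of Theorem~\ref{theo:M} to produce a periodic maximizer is a bit loose (that theorem concerns flip-equivalence, not extremal constructions), but since the paper treats this as a black box the point is moot. For items (2) and (3) your argument---slide $M$ vertical floors to the top, peel them off, and invoke regularity plus Theorem~\ref{theo:M}---is exactly what the paper does, with $b = d + c_\cD M$.

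Where you diverge is item (4), and here you are making life much harder than necessary. The paper's proof of (4) is one line: by the \emph{definition} of a thin component, every tiling $\bt$ in a thin component satisfies $\nvert(\bt) < M$, so $\theta_N \le |\{\bt \in \cT(\cR_N) : \nvert(\bt) < M\}|$, and Lemma~\ref{lemma:manyverticalfloors} bounds the right-hand side by $|\cT(\cR_N)|\,o(c^N)$. In the language of your subshift framework, there is an obvious singleton family $\cP$ of fat-forcing patterns---namely ``$M$ consecutive vertical floors'' (equivalently, the full plug $\fullplug$ occurring at $M$ consecutive heights)---and the strict spectral gap $\tilde\lambda < \lambda$ you worry about is precisely the content of Lemma~\ref{lemma:manyverticalfloors}, whose proof is a short Perron--Frobenius argument on the plug graph (the full plug has positive stationary mass, so missing it $\Theta(N)$ times in a row is exponentially unlikely). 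So your ``hard part'' is already isolated and proved in the paper as a standalone lemma; the general forbidden-pattern machinery is correct but unnecessary.
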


The constant $c_{\cD}$ is the same one introduced
in Section 11 of \cite{regulardisk};
see particularly Lemma 11.1
(restated below as Lemma \ref{lemma:cD}).
There are many examples of
quadriculated disks $\cD \subset \RR^2$
for which $\cT(\cR_N)$ can be shown
to have many thin connected components,
even many connected components with a single tiling.
The examples in Figure \ref{fig:single} below
and in Figure 7 in \cite{regulardisk} should make this clear.
It would be interesting to have more precise results,
perhaps including estimates for the sizes of thin connected components.

\begin{figure}[ht]
\begin{center}
\includegraphics[scale=0.27]{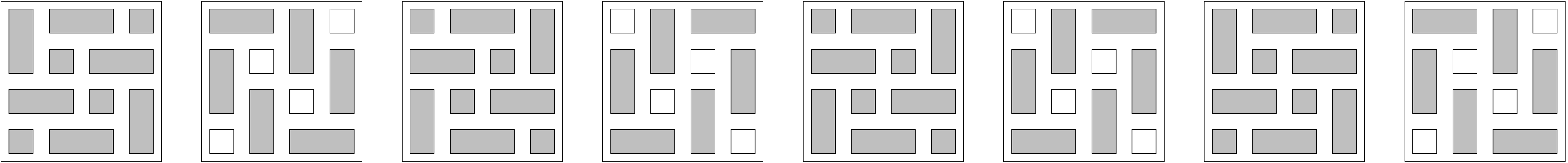}
\end{center}
\caption{A tiling of $[0,4]^2 \times [0,8]$ which admits no flips.}
\label{fig:single}
\end{figure}

Our next result has been announced (but not proved) in \cite{regulardisk}.
It follows easily from the other theorems in this paper.
Consider a nontrivial quadriculated disk $\cD \subset \RR^2$.
There is an obvious homomorphism $G_{\cD} \to \ZZ/(2)$
taking $\bt \in \cT(\cR_N)$ to $N \bmod 2$:
let $G^{+}_{\cD} < G_{\cD}$ be its kernel.
Consider the restriction $\Tw: G^{+}_{\cD} \to \ZZ$
and its kernel $\ker(\Tw) < G^{+}_{\cD}$.
We have $|\ker(\Tw)| = 1$ if and only if $\cD$ is regular.
If $\ker(\Tw)$ is infinite, $1/|\ker(\Tw)| = 0$.

\begin{theo}
\label{theo:limitprob}
Let $\cD$ be a non trivial quadriculated disk.
Let $G^{+}_{\cD}$ be the even domino group;
let $\Tw: G^{+}_{\cD} \to \ZZ$ be the twist map.
Let $\bT_0, \bT_1$ be independent random tilings
of $\cR_N = \cD \times [0,N]$;
we have
\[ \lim_{N \to \infty} \Prob[\bT_0 \approx \bT_1 | \Tw(\bT_0) = \Tw(\bT_1)]
= \frac{1}{|\ker(\Tw)|}.
\]
\end{theo}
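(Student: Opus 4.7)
The plan is to reduce the statement to a computation about $\sim$-equivalence and then to exploit a refined local limit theorem for the distribution of $\bT_N$ over the group $G^{+}_{\cD}$.

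First, I would invoke the general form of Theorem \ref{theo:components}, in which the fat $\approx$-components of $\cT(\cR_N)$ are indexed by elements $g \in G^{+}_{\cD}$ whose twist lies in the admissible range (the regular case $|\ker(\Tw)| = 1$ being the version stated in the introduction). This indexing makes $\approx$-equivalence of two tilings inside fat components coincide with $\sim$-equivalence, while the thin components carry only a fraction $o(\tilde c^N)$ of $|\cT(\cR_N)|$. Hence
\[ \Prob[\bT_0 \approx \bT_1 \mid \Tw(\bT_0) = \Tw(\bT_1)]
   = \Prob[\bT_0 \sim \bT_1 \mid \Tw(\bT_0) = \Tw(\bT_1)] + o(1), \]
and it suffices to evaluate the $\sim$-version of the limit.

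Next, I would upgrade Theorem \ref{theo:normaltwist} to a local limit theorem for the $\sim$-class of $\bT_N$, the natural group-valued analogue that should fall out of the machinery used in Theorem \ref{theo:truenormal}. When $k := |\ker(\Tw)|$ is finite, this should yield, for any sequence $g_N \in G^{+}_{\cD}$ with $\Tw(g_N)/\sqrt{N} \to \tau$,
\[ \sqrt{N}\,\Prob[\bT_N \sim g_N] \to \frac{C_0}{k}\exp(-C_1 \tau^2), \]
so that each $\sim$-class with twist $t_N$ carries asymptotic probability $P_{t_N}/k$, where $P_t = \Prob[\Tw(\bT_N) = t]$. When $k = \infty$, the corresponding statement is that $\max_{g} \Prob[\bT_N \sim g] = o(1/\sqrt{N})$ uniformly on any bounded twist range.

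Finally, conditioning on the common twist value $t$,
\[ \Prob[\bT_0 \sim \bT_1,\ \Tw(\bT_0) = \Tw(\bT_1) = t]
   = \sum_{\substack{g \in G^{+}_{\cD} \\ \Tw(g) = t}} \Prob[\bT_N \sim g]^2. \]
If $k < \infty$, each of the $k$ terms is asymptotic to $(P_t/k)^2$ and the sum is $P_t^2/k$; dividing by $\Prob[\Tw(\bT_0) = \Tw(\bT_1) = t] = P_t^2$ and integrating over $t$ against the Gaussian profile from Theorem \ref{theo:normaltwist} produces the conditional probability $1/k$. If $k = \infty$, the crude bound $\sum_g p_g^2 \le (\max_g p_g)\,P_t$ combined with $\max_g p_g = o(P_t)$ forces the ratio to be $o(1)$, consistent with $1/|\ker(\Tw)| = 0$. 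The main obstacle is the equidistribution claim across $\sim$-classes of the same twist, particularly in the infinite-kernel case: one needs enough uniformity in the local limit/transfer-matrix argument underlying Theorem \ref{theo:truenormal} to guarantee that no single $\sim$-class absorbs a positive share of the twist-$t$ probability.
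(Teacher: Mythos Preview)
Your plan diverges from the paper's argument precisely at the point you flag as the main obstacle, and the paper's route avoids that obstacle entirely.

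The paper does \emph{not} establish or use a group-valued local limit theorem. Instead it uses an elementary block-substitution device: fix $N_0$ and tilings $\bt_0,\ldots,\bt_{k-1}$ of $\cR_{N_0}$ representing $k$ distinct elements of $K=\ker(\Tw)$ (all of $K$ when $k=|K|<\infty$, or any $k$ of them when $|K|=\infty$). Declare two tilings of $\cR_N$ $\equiv$-equivalent if one is obtained from the other by replacing the \emph{first} occurring block $\bt_i$ by some $\bt_j$. By a variant of Lemma~\ref{lemma:manyverticalfloors}, almost every tiling contains at least one block; every nontrivial $\equiv$-class has exactly $k$ members, all with the same twist, lying in $k$ distinct $\sim$-classes. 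Conditioning on $\bT_0$, each $\equiv$-class of $\bT_1$ with the right twist contains exactly one (finite case) or at most one (infinite case) tiling with $\bT_0\sim\bT_1$, giving conditional probability $1/k$ or $\le 1/k$ directly. The infinite-kernel case then follows by letting $k\to\infty$.

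This bypasses your equidistribution problem: no asymptotic statement about the sizes of individual $\sim$-classes is needed, only the exact combinatorial symmetry within each $\equiv$-class. Your route could likely be completed in the finite-kernel case by combining Theorem~\ref{theo:cofinite} with Theorem~\ref{theo:truenormal}, but the uniform bound $\max_g \Prob[\bT_N\sim g]=o(1/\sqrt{N})$ you would need for $|K|=\infty$ is not available from the paper's machinery and would require genuinely new work.

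A smaller point: the ``general form of Theorem~\ref{theo:components}'' you invoke, with fat components indexed by $G^{+}_{\cD}$ for arbitrary nontrivial $\cD$, is not stated in the paper; Theorem~\ref{theo:components} is proved only for regular disks. The paper's proof needs only the weaker (and tersely justified) claim that $\Prob[\bT_0\sim\bT_1,\ \bT_0\not\approx\bT_1]$ is negligible relative to $\Prob[\Tw(\bT_0)=\Tw(\bT_1)]\gtrsim 1/N$, together with the trivial implication $\approx\Rightarrow\sim$.
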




The simplest local move after the flip is the {\em trit}.
We consider three dominoes in three different directions
whose union is a $2\times 2\times 2$ cube
minus two unit cubes in opposite corners.
Remove the three dominoes and place them back
in the only other possible way.
Two examples of trits are shown in Figure \ref{fig:trit2}.

\begin{figure}[ht]
\begin{center}
\includegraphics[scale=0.27]{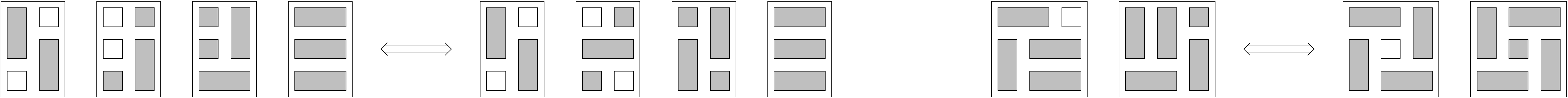}
\end{center}
\caption{Two examples of trits.
In both cases the tiling on the left has twist $0$,
the one on the right has twist $+1$.}
\label{fig:trit2}
\end{figure}

It is shown in \cite{FKMS, segundoartigo} that if two tilings
$\bt_0$ and $\bt_1$ are joined by a trit then
$\Tw(\bt_1) = \Tw(\bt_0) \pm 1$;
the sign depends on the orientation of the trit.
The following result shows that almost any two tilings 
can be connected by flips and trits.
In most cases, if $\bt_a, \bt_b \in \cT(\cR_N)$
there exists a path of flips and trits from $\bt_a$ to $\bt_b$
with precisely $|\Tw(\bt_a) - \Tw(\bt_b)|$ trits.

\begin{theo}
\label{theo:trit}
Let $\cD \subset \RR^2$ be a regular quadriculated disk.
Assume furthermore that $\cD$ contains a $2 \times 3$ rectangle.
Let $c \in \QQ$, $b \in \RR$
be as in Theorem \ref{theo:components}.
Let $\cF_{N,t}$ be the fat components under flips of $\cT(\cR_N)$.
Then there exists $\tilde b \in \RR$, $\tilde b \ge b$,
with the following property.

If $t_0 \in \ZZ \cap [-cN+\tilde b, cN-\tilde b-1]$
then there exist tilings $\bt_0 \in \cF_{N,t_0}$
and $\bt_1 \in \cF_{N,t_0+1}$ such that
$\bt_0$ and $\bt_1$ are joined by a trit.

The set $\cT(\cR_N)$ has a giant component under flips and trits
$\cG \subseteq \cT(\cR_N)$.
More precisely, there exists $\tilde c \in (0,1)$ such that
$|\cT(\cR_N) \smallsetminus \cG| = |\cT(\cR_N)|\,o(\tilde c^N)$.
\end{theo}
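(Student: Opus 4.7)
The plan is to build an explicit trit between $\cF_{N,t_0}$ and $\cF_{N,t_0+1}$ by stacking a small trit gadget on top of a flexible filler, and then to bound the complement of the resulting giant component using Theorem~\ref{theo:components}(4) and Theorem~\ref{theo:normaltwist}.

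For the trit, the $2\times 3$ assumption furnishes a fixed cylinder $\cR_{N_0}$ (with $N_0$ depending only on $\cD$) containing two tilings $\bfh_0, \bfh_1$ that differ by a single trit; after possibly exchanging them we may assume $\Tw(\bfh_1) = \Tw(\bfh_0) + 1$. Let $M$ be the integer from Theorem~\ref{theo:M} and set
\[ \tilde b \;:=\; b + c_{\cD}(N_0 + M) + |\Tw(\bfh_0)|. \]
For $t_0$ in the stated range, $s := t_0 - \Tw(\bfh_0)$ lies in $[-c_{\cD}(N - N_0 - M) + b,\, c_{\cD}(N - N_0 - M) - b]$, so Theorem~\ref{theo:components}(2) applied to $\cR_{N - N_0 - M}$ furnishes $\bfs \in \cT(\cR_{N - N_0 - M})$ with $\Tw(\bfs) = s$. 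Define $\bt_i := \bfs \ast \bt_{\nvert, M} \ast \bfh_i$ for $i \in \{0,1\}$. Additivity of the twist yields $\Tw(\bt_i) = t_0 + i$, while the block $\bt_{\nvert,M}$ supplies $M$ consecutive vertical floors, so $\bt_i$ is fat. By the uniqueness clause of Theorem~\ref{theo:components}(2), $\bt_i \in \cF_{N, t_0+i}$, and the trit at the top gadget is a trit from $\bt_0$ to $\bt_1$.

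For the giant component, let $\cG$ be the unique $\{\text{flip},\text{trit}\}$-connected component of $\cT(\cR_N)$ containing every $\cF_{N,t}$ with $t \in \ZZ \cap [-c_{\cD}N + \tilde b,\, c_{\cD}N - \tilde b]$; by the previous paragraph these fat components are trit-connected in sequence, so $\cG$ is well-defined for $N$ large enough. Then $\cT(\cR_N) \smallsetminus \cG$ is contained in (a) the thin tilings, of total size $|\cT(\cR_N)| \cdot o(\tilde c_1^N)$ by Theorem~\ref{theo:components}(4), and (b) at most $2\tilde b + 2$ extreme-twist fat components $\cF_{N,t}$ with $|t|$ within $\tilde b$ of $c_{\cD}N$. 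For each $t$ as in (b), a large-deviations companion to Theorem~\ref{theo:normaltwist} gives $|\Tw^{-1}(t)|/|\cT(\cR_N)| = o(\tilde c_2^N)$; taking $\tilde c := \max(\tilde c_1, \tilde c_2)$ completes the bound.

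The main obstacle is the extreme-twist estimate in (b): as stated, Theorem~\ref{theo:normaltwist} is a pointwise local CLT at the scale $|t| = O(\sqrt N)$ and does not literally control $|\Tw^{-1}(t)|$ for $|t|$ of order $N$. The required exponential upper bound is nevertheless the standard large-deviations companion to such a local CLT: twists of order $N$ pin the tiling near an extremal ``trit density,'' and a Chernoff-type estimate on the twist-weighted partition function underlying Theorem~\ref{theo:normaltwist} yields the exponential gap. I expect this to be the technical core of the argument, but essentially a by-product of the same generating-function analysis rather than a new ingredient.
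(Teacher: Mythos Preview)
Your construction of the trit between $\cF_{N,t_0}$ and $\cF_{N,t_0+1}$ is correct and essentially identical to the paper's (the paper uses $N_0=4$ with the gadget from Figure~\ref{fig:trit2}, but the idea is the same).

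For the giant component, however, you are working harder than necessary, and the gap you identify in step~(b) is avoidable. The paper does not invoke any large-deviations estimate for extreme twists. The key observation you are missing is this: if a tiling $\bt$ has $K$ vertical floors, then (after moving them to one end via flips) $\bt \approx \bt_{\nvert,K} \ast \bt'$ with $\bt' \in \cT(\cR_{N-K})$, and hence $|\Tw(\bt)| = |\Tw(\bt')| \le c_{\cD}(N-K)$ by Theorem~\ref{theo:components}(1) applied to $\cR_{N-K}$. Contrapositively, any tiling with $|\Tw(\bt)| > c_{\cD}N - \tilde b$ has fewer than $\tilde b/c_{\cD}$ vertical floors. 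Thus your set~(b), the extreme-twist fat components, is already contained in the set of tilings with at most a \emph{fixed constant} number of vertical floors, and Lemma~\ref{lemma:manyverticalfloors} bounds that set by $|\cT(\cR_N)|\,o(\tilde c^N)$ directly. No Chernoff bound or analysis of the twist-weighted partition function is needed.

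So your decomposition into (a)~thin tilings and (b)~extreme-twist fat tilings is fine, but both pieces are handled by the same tool: both consist of tilings with fewer than $\max(M,\tilde b/c_{\cD})$ vertical floors, and Lemma~\ref{lemma:manyverticalfloors} finishes the proof in one stroke.
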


In Section \ref{section:review} we review some more results,
particularly from \cite{regulardisk},
and state the other main results of this paper,
Theorems \ref{theo:cofinite}, \ref{theo:normal} and \ref{theo:truenormal}.
These results are more general and imply Theorem \ref{theo:normaltwist}.
Their statements make essential use of the domino group $G_{\cD}$.
Section \ref{section:randomtilings} contains
the first results concerning random tilings.
This is closely related to studying random paths
in a finite graph and uses the Perron-Frobenius Theorem.
In Section \ref{section:cocycle} we use the language of homology
to obtain algebraic expressions for, say,
the number of tilings $\bt$ of $\cR_N$ with $\Tw(\bt) = t$
(in terms of $N$ and $t$).
Sections \ref{section:normal} and \ref{section:truenormal}
are the most technical and contain the proofs of
Theorems \ref{theo:normal} and \ref{theo:truenormal}.
Section \ref{section:components} contains
the proofs of Theorems \ref{theo:components},
\ref{theo:limitprob} and \ref{theo:trit}:
by this point, the proofs are short and simple.
Finally, Section \ref{section:final}
lists a few remarks and related open questions.

\smallskip

The author thanks Juliana Freire, Simon Griffiths, 
Caroline Klivans, Pedro Milet and Breno Pereira
for helpful conversations, comments and suggestions.
He would also like to thank the referees for
carefully reading the manuscript and offering several helpful comments.
The author is also thankful for the generous support of
CNPq, CAPES and FAPERJ (Brazil).




\section{Review and results}
\label{section:review}

In this section we review notation and results,
particularly from \cite{4domino, regulardisk}.
We also state the other main results of this paper,
which are harder to state than
Theorems \ref{theo:normaltwist} and \ref{theo:components}.
Our focus in the introduction, as in \cite{regulardisk},
was in regions of dimension $3$.
As in \cite{4domino},
the general construction below is also valid
for regions of dimension $n \ge 4$.
We then assume $\cD \subset \RR^{n-1}$ to be
a balanced, contractible, bounded cubiculated region
and write $\cR_N = \cD \times [0,N]$.
Remember however that for tilings $\bt$ of $\cR \subset \RR^n$, $n \ge 4$,
the twist $\Tw(\bt)$ is an element of $\ZZ/(2)$ (not of $\ZZ$).

A {\em plug} is a subset $p \subseteq \cD$
which is the union of an equal number of black and white unit squares
(or cubes);
a plug is sometimes confused with a set of unit squares (or cubes).
If $\cD$ has $2k = |\cD|$ unit squares,
the set $\cP$ of all plugs has $\binom{2k}{k}$ elements.
The empty set is the empty plug $\emptyplug \in \cP$
and $\cD$ is the full plug $\fullplug \in \cP$.
We say two plugs $p_0$ and $p_1$ are {\em disjoint}
if their interiors are disjoint.
Given two disjoint plugs $p_0$ and $p_1$,
$\cD_{p_0,p_1} \subseteq \cD$ is the union of all unit squares
contained in $\cD$ with interior disjoint from both $p_0$ and $p_1$.
Thus, $\cD_{p_0,p_1}$ is a balanced quadriculated region,
possibly disconnected and not necessarily tileable.
A {\em (full) floor} is a triple $f = (p_0,f^\ast,p_1)$
where $p_0$ and $p_1$ are disjoint plugs and
$f^\ast$ is a tiling of $\cD_{p_0,p_1}$.
A floor is vertical if $f^\ast = \emptyset$ (see Figure \ref{fig:vert});
in other words, a floor is vertical
if all dominoes intersecting it are vertical.
Informally, a floor is what we draw for each floor
of a tiling of $\cR_N = \cD \times [0,N]$,
as in Figure \ref{fig:flip}.
A {\em cork} is a region of the form
\[ \cR_{0,N;p_0,p_N} = \cR_N \smallsetminus
((p_0 \times [0,1]) \cup (p_N \times [N-1,N])); \]
a tiling $\bt$ of $\cR_{0,N;p_0,p_N}$ is identified with a sequence of
plugs and floors:
$\bt = (p_0,f_1,p_1,\ldots,p_{N-1},f_N,p_N)$ where,
for each $k$, we have $f_k = (p_{k-1},f^\ast_k,p_k)$,
$p_{k-1}$ and $p_k$ are disjoint 
and $f^\ast_k \in \cT(\cD_{p_{k-1},p_k})$.

\begin{figure}[ht]
\begin{center}
\includegraphics[scale=0.275]{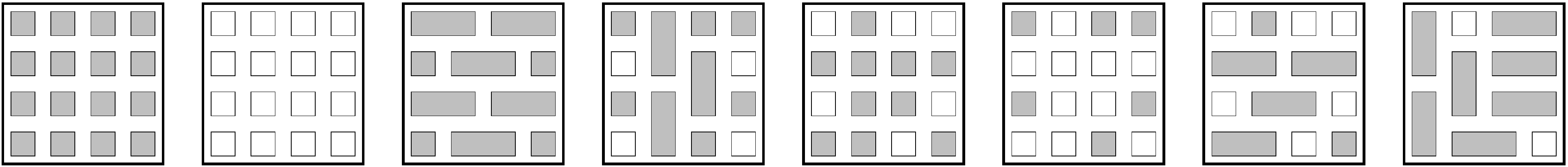}
\end{center}
\caption{A tiling $\bt$ of $\cR_8 = \cD \times [0,8]$ where $\cD = [0,4]^2$.
Floors numbers $1$, $2$, $5$ and $6$ are vertical so that $\nvert(\bt) = 4$.}
\label{fig:vert}
\end{figure}

For a tiling $\bt$ of $\cR_N$,
let $\nvert(\bt)$ be the number of vertical floors of $\bt$.
The following result is a corollary of Theorem \ref{theo:M}
and is essentially the same as Corollary 12.1 in \cite{regulardisk}.
A similar result holds for $\cD \subset \RR^{n-1}$, $n \ge 4$.

\begin{prop}
\label{prop:giant}
Let $\cD \subset \RR^2$ be a regular disk.
Let $M$ be as in Theorem \ref{theo:M}.
Let $\bt_0, \bt_1$ be tilings of $\cR_N$.
If $\Tw(\bt_0) = \Tw(\bt_1)$, $\nvert(\bt_0) \ge M$ and $\nvert(\bt_1) \ge M$
then $\bt_0 \approx \bt_1$.
\end{prop}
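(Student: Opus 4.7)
The strategy is to reduce directly to Theorem~\ref{theo:M} by first using flips to gather $M$ vertical floors at the top of each tiling, so that the hypothesis of Theorem~\ref{theo:M} becomes visible as a suffix of the form $\ast\,\bt_{\nvert,M}$.

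First, I would invoke the fact recalled in the paragraph preceding Theorem~\ref{theo:components}, namely that a vertical floor can be moved up or down past an adjacent floor through a finite sequence of flips (this is Lemma~5.2 of \cite{regulardisk}). Iterating this exchange, for each $i \in \{0,1\}$ the hypothesis $\nvert(\bt_i) \ge M$ allows one to transport $M$ vertical floors all the way to the top of the cylinder. Hence there exists $\bt_i' \in \cT(\cR_{N-M})$ with $\bt_i \approx \bt_i' \ast \bt_{\nvert,M}$.

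Second, since flips preserve the twist and $\Tw(\bt_{\nvert,M})=0$, the additivity $\Tw(\bt_0 \ast \bt_1) = \Tw(\bt_0) + \Tw(\bt_1)$ gives $\Tw(\bt_i') = \Tw(\bt_i' \ast \bt_{\nvert,M}) = \Tw(\bt_i)$, so $\Tw(\bt_0') = \Tw(\bt_1')$. Theorem~\ref{theo:M}, applied to $\bt_0',\bt_1' \in \cT(\cR_{N-M})$, then yields $\bt_0' \ast \bt_{\nvert,M} \approx \bt_1' \ast \bt_{\nvert,M}$, and stringing together
\[ \bt_0 \;\approx\; \bt_0' \ast \bt_{\nvert,M} \;\approx\; \bt_1' \ast \bt_{\nvert,M} \;\approx\; \bt_1 \]
closes the argument by transitivity of $\approx$.

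The only non-routine step is the first: justifying that $M$ vertical floors can really be transported to the top of the cylinder by flips alone, irrespective of the floors one must push them through. This is a floor-by-floor swap statement for one vertical floor and an arbitrary adjacent floor; a direct proof would analyze how the horizontal dominoes of the adjacent floor combine with the vertical dominoes to generate the needed flip sequence. Since this swap is exactly Lemma~5.2 of \cite{regulardisk}, I would cite it rather than reproduce the case analysis, making the overall proof essentially a two-line deduction.
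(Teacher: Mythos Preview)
Your proposal is correct and is exactly the intended argument. The paper does not spell out a proof here---it just declares the proposition a corollary of Theorem~\ref{theo:M} and points to Corollary~12.1 of \cite{regulardisk}---but the pieces you used (moving vertical floors to the top via Lemma~5.2 of \cite{regulardisk}, then applying Theorem~\ref{theo:M} and twist additivity) are precisely the ones the paper invokes elsewhere, e.g.\ in the definition of fat components and in the uniqueness part of the proof of Theorem~\ref{theo:components}.
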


The set of vertices of the complex $\cC_{\cD}$ is $\cP$.
Each floor $f = (p_0,f^\ast,p_1)$ is
an undirected edge joining $p_0$ and $p_1$.
Notice that tilings of $\cD$ are loops from $\emptyplug$ to itself.
A tiling $\bt \in \cT(\cR_{0,N;p_0,p_N})$ is identified with
a path of length $N$ in $\cC_{\cD}$ from $p_0$ to $p_N$.
Let $A \in \ZZ^{\cP\times\cP}$ be the adjacency matrix of $\cC_{\cD}$:
$A_{p_0,p_1} = |\cT(\cD_{p_0,p_1})|$ if $p_0$ and $p_1$ are disjoint,
$A_{p_0,p_1} = 0$ otherwise.
We then have $|\cT(\cR_{0,N;p_0,p_N})| = (A^N)_{p_0,p_N}$.
For sufficiently large $N$,
all entries of $A^N$ are strictly positive.

Flips define the $2$-cells of $\cC_{\cD}$:
horizontal flips are bigons and vertical flips are squares.
We also need one $2$-cell for each self-loop.
If $\bt_0 \in \cT(\cR_{0,N_0;p_0,p_1})$
and $\bt_1 \in \cT(\cR_{0,N_1;p_0,p_1})$ 
are interpreted as paths in $\cC_{\cD}$,
we have $\bt_0 \sim \bt_1$ if and only if
the two paths are homotopic with fixed endpoints.
The domino group $G_{\cD} = \pi_1(\cC_{\cD};\emptyplug)$
is the set of tilings of $\cR_N$ (for all $N$)
modulo the equivalence relation $\sim$.
Vertical tilings represent the identity element in $G_{\cD}$.
The group operation is $\ast$, the concatenation of tilings (or paths).
The structure of $G_{\cD}$ is very informative.

We are ready to state another of our main results.

\begin{theo}
\label{theo:cofinite}
Let $K < G_{\cD}$ be a subgroup of finite index $n$ with
$K \not\subseteq G^{+}_{\cD}$;
let $\psi: G_{\cD} \to G_{\cD}/K$ be the quotient map.
For each $N$, consider the random variable $\bX_N = \psi(\bT)$
with values in the finite set $G_{\cD}/K$;
here $\bT$ is a random tiling of $\cR_N$.
The random variables $\bX_N$ converge in distribution
to the uniform distribution in the set $G_{\cD}/K$.
\end{theo}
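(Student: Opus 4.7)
The plan is to realize $\bX_N$ as the endpoint of a walk in a finite cover of $\cC_{\cD}$ and then apply Perron--Frobenius asymptotics to the resulting adjacency matrix. Let $\pi: \tilde{\cC} \to \cC_{\cD}$ be the connected covering corresponding to $K < G_{\cD} = \pi_1(\cC_{\cD},\emptyplug)$, with basepoint $\tilde *$ over $\emptyplug$; the fiber above $\emptyplug$ has $n$ points, canonically indexed by $G_{\cD}/K$. Let $\tilde\Gamma$ be the $1$-skeleton of $\tilde\cC$, a connected graph with vertex set $\cP \times (G_{\cD}/K)$, and let $\tilde A$ be its adjacency matrix. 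By unique path lifting, tilings $\bt \in \cT(\cR_N)$ with $\psi(\bt) = gK$ correspond to paths of length $N$ in $\tilde\Gamma$ from $(\emptyplug,K)$ to $(\emptyplug,gK)$, so
\[ \Prob[\bX_N = gK] \;=\; \frac{(\tilde A^N)_{(\emptyplug,K),(\emptyplug,gK)}}{(A^N)_{\emptyplug,\emptyplug}}. \]

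The matrix $A$ is non-negative, irreducible and primitive (the excerpt notes all entries of $A^N$ are eventually positive), so Perron--Frobenius yields a simple dominant eigenvalue $\lambda > 0$ with strictly positive right and left Perron eigenvectors $v$ and $u$. Because $\tilde\Gamma \to \cC^{(1)}_{\cD}$ is a covering of graphs --- each edge incident to a vertex lifts uniquely to every vertex of the fiber --- the diagonal lifts $\tilde v_{(p,hK)} := v_p$ and $\tilde u_{(p,hK)} := u_p$ satisfy $\tilde A \tilde v = \lambda \tilde v$ and $\tilde u^{T} \tilde A = \lambda \tilde u^{T}$. Being strictly positive, they are Perron eigenvectors of $\tilde A$, so $\tilde A$ also has spectral radius $\lambda$; moreover $\tilde u^{T} \tilde v = n\, u^{T} v$ while $\tilde v_{(\emptyplug, K)} = v_{\emptyplug}$ and $\tilde u_{(\emptyplug, gK)} = u_{\emptyplug}$, independent of the coset $gK$.

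The main obstacle is verifying that $\tilde A$ is \emph{primitive}, and this is exactly where the hypothesis $K \not\subseteq G^{+}_{\cD}$ is used. The period of the irreducible matrix $\tilde A$ is the gcd of lengths of closed walks at $(\emptyplug, K)$, equivalently the gcd of those $N$ for which some $\bt \in \cT(\cR_N)$ has $[\bt] \in K$. The vertical tilings $\bt_{\nvert,2k}$ represent the identity of $G_{\cD}$ and hence belong to $K$, contributing every even $N$; the hypothesis furnishes a tiling of odd length whose class lies in $K$. Hence the gcd equals $1$ and $\tilde A$ is primitive. Without this hypothesis the period would be $2$ and $\Prob[\bX_N = gK]$ would genuinely oscillate with the parity of $N$, so the conclusion really requires $K \not\subseteq G^{+}_{\cD}$.

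Combining the previous steps, primitivity of $\tilde A$ gives the standard Perron--Frobenius asymptotics
\[ (\tilde A^N)_{(\emptyplug,K),(\emptyplug,gK)} = \lambda^N \frac{v_{\emptyplug}\, u_{\emptyplug}}{n\, u^{T} v}\,(1 + o(1)), \qquad (A^N)_{\emptyplug,\emptyplug} = \lambda^N \frac{v_{\emptyplug}\, u_{\emptyplug}}{u^{T} v}\,(1 + o(1)), \]
as $N \to \infty$, with the first estimate independent of $gK$. Dividing yields $\Prob[\bX_N = gK] \to 1/n$ for every $gK \in G_{\cD}/K$, which is the uniform distribution on the finite set $G_{\cD}/K$, as required.
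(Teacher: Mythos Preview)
Your proof is correct and follows the same strategy as the paper: pass to the finite cover $\cC_{\cD}^K$ associated to $K$, identify $\bX_N$ with the endpoint of the lifted path, observe that the Perron data lift from $A$ to the covering adjacency matrix, and read off $\Prob[\bX_N = gK] \to 1/n$ from Perron--Frobenius asymptotics. You are in fact more explicit than the paper on one point: you spell out that primitivity of $\tilde A$ is exactly where the hypothesis $K \not\subseteq G^{+}_{\cD}$ is used (vertical tilings give all even closed-walk lengths, and an element of $K \smallsetminus G^{+}_{\cD}$ supplies an odd one), whereas the paper simply asserts that Lemma~\ref{lemma:pf} applies to $\cC_{\cD}^K$. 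One small notational caveat: writing the vertex set of the cover as $\cP \times (G_{\cD}/K)$ tacitly assumes a global trivialization that need not exist when $K$ is not normal; the argument only needs $\tilde v_q = v_{\Pi(q)}$, which is what you actually use.
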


The proof of Theorem \ref{theo:cofinite} is given
in Section \ref{section:randomtilings}.

For a sequence of random variables such as $(\bX_N)$,
assuming values in a fixed finite set,
the notion of convergence is easy.
Indeed, we are merely stating that, for any $x \in G_{\cD}/K$,
we have
\[ \lim_{N \to \infty} \Prob[\bX_N = x] = \frac{1}{n}. \]
We shall soon discuss the more complicated case
of random variables assuming values in infinite sets.

Notice that we do not assume that $K$ is a normal subgroup;
the quotient space $G_{\cD}/K$ in the statement above
can be taken to be either
the left coset space $\{ gK; g \in G_{\cD} \}$ or
the right coset space $\{ Kg; g \in G_{\cD} \}$
(both have $n$ elements).
The following corollary illustrates some uses of Theorem \ref{theo:cofinite}.

\begin{coro}
\label{coro:cofinite}
Consider a nontrivial quadriculated disk $\cD \subset \RR^2$
and a random tiling $\bT_N$ of $\cR_N$.
Consider a fixed positive integer $n \in \NN^\ast$.
Then, for any $a \in \ZZ$,
\begin{equation*}
\lim_{N \to \infty} \Prob[\;\Tw(\bT_N) \equiv a \pmod n\;] = \frac{1}{n},
\qquad
\lim_{N \to \infty} \Prob[\Tw(\bT_N) = a] = 0.
\end{equation*}
\end{coro}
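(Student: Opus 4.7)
The plan is to apply Theorem~\ref{theo:cofinite} to the finite-index subgroup $K = \Tw^{-1}(n\ZZ) < G_{\cD}$. Since $\Tw \colon G_{\cD} \to \ZZ$ is surjective for nontrivial $\cD$, $K$ has index $n$ in $G_{\cD}$, and $\Tw$ induces an isomorphism $G_{\cD}/K \cong \ZZ/(n)$ under which the quotient map $\psi$ appearing in Theorem~\ref{theo:cofinite} corresponds to reduction of the twist modulo $n$.

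The crucial verification is that $K \not\subseteq G^{+}_{\cD}$; for this it suffices to exhibit an odd-parity element of $G_{\cD}$ of twist $0$. I would take any planar tiling $\bt_0 \in \cT(\cD)$---which exists in the cases of interest---and view it as an element of $\cT(\cR_1)$ by extending each planar domino to a horizontal $3$D domino. The stacked tiling $\bt_0 \ast \bt_0 \in \cT(\cR_2)$ has two identical horizontal floors, and each pair of aligned horizontal dominoes (one per floor) fills a $2\times 1\times 2$ block that can be retiled by two $z$-dominoes; this is a valid vertical flip. Performing it at every such block converts $\bt_0 \ast \bt_0$ into $\bt_{\nvert,2}$, so $\bt_0 \ast \bt_0 \approx \bt_{\nvert,2}$. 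By flip-invariance and additivity of the twist, $2\Tw(\bt_0) = \Tw(\bt_{\nvert,2}) = 0$, whence $\Tw(\bt_0) = 0 \in n\ZZ$; since $\bt_0 \in \cT(\cR_1)$ has odd parity, it lies in $K \setminus G^{+}_{\cD}$.

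Theorem~\ref{theo:cofinite} then yields that $\Tw(\bT_N) \bmod n$ converges in distribution to the uniform law on $\ZZ/(n)$, which is the first identity. For the second, fix $a \in \ZZ$; for every $n \in \NN^\ast$ the inclusion $\{\Tw(\bT_N) = a\} \subseteq \{\Tw(\bT_N) \equiv a \pmod n\}$ and the first identity give $\limsup_{N \to \infty} \Prob[\Tw(\bT_N) = a] \le 1/n$, so letting $n \to \infty$ forces the limit to be $0$. The only step with any real content is the non-containment $K \not\subseteq G^{+}_{\cD}$; everything else is bookkeeping around Theorem~\ref{theo:cofinite}.
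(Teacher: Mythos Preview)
Your proof is correct and follows essentially the same route as the paper: set $K = \ker(\Tw \bmod n)$, apply Theorem~\ref{theo:cofinite} for the first limit, then let $n \to \infty$ for the second (the paper happens to take $n = 2^k$, but this is immaterial). You are in fact more careful than the paper in that you explicitly verify the hypothesis $K \not\subseteq G^{+}_{\cD}$ via a planar tiling of $\cD$; the paper leaves this check implicit.
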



\begin{proof}
For $n \in \NN^\ast$, let $\psi: G_{\cD} \to \ZZ/(n)$
be defined by $\psi(\bt) = \Tw(\bt) \bmod n$.  Take $K = \ker(\psi)$.
The first limit in the display
follows directly from Theorem \ref{theo:cofinite}.
For the second limit, apply the first one with $n = 2^k$
and let $k$ go to infinity.
\end{proof}


For the next theorems, random variables assume real values.
The first result uses the concept of convergence in distribution;
we shall further comment on this concept both in and after the statement.

\begin{theo}
\label{theo:normal}
Let $\cD$ be a nontrivial balanced quadriculated disk.
Let $\bT_N$ be a random tiling of $\cR_N$.
Let $G_{\cD}$ be the domino group.
Let $\psi: G_{\cD} \to \ZZ$ be a surjective homomorphism.
As $N \to \infty$, the real random variable
\[ \frac{1}{\sqrt{N}} \; \psi(\bT_N) \]
converges in distribution to a normal distribution centered at $0$.
In other words,
there exist $C_0, C_1 \in (0,+\infty)$ with the property below.
If $(t_N^{-})$ and $(t_N^{+})$ are sequences of integers with
$\lim_{N \to \infty} t_N^{-}/\sqrt{N} = \tau^{-}$,
$\lim_{N \to \infty} t_N^{+}/\sqrt{N} = \tau^{+}$
with $\tau^{-} < \tau^{+}$ 
then 
\[ \lim_{N \to \infty}
\Prob[t_N^{-} < \psi(\bT_N) < t_N^{+}]
= \int_{\tau^{-}}^{\tau^{+}} C_0 \exp(-C_1 \tau^2) d\tau. \]
\end{theo}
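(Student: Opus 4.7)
The plan is to translate the problem into a central limit theorem for a sum of a $1$-cocycle along a random closed walk in the finite graph underlying $\cC_{\cD}$, and then apply the twisted transfer operator (Nagaev--Guivarc'h) method via Perron--Frobenius perturbation theory. First I would identify each tiling $\bt$ of $\cR_N$ with the closed walk of length $N$ in the $1$-skeleton of $\cC_{\cD}$ based at $\emptyplug$, so that a uniformly random $\bT_N$ corresponds to a walk chosen uniformly among the $(A^N)_{\emptyplug,\emptyplug}$ such walks. Since $\ZZ$ is abelian and $G_{\cD}=\pi_1(\cC_{\cD},\emptyplug)$, the homomorphism $\psi$ factors through $H_1(\cC_{\cD};\ZZ)$ and is represented by an integer $1$-cocycle $\phi$: a function on oriented floors, antisymmetric under reversing orientation, whose signed sum around the boundary of every $2$-cell (flip or loop) vanishes. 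For any closed walk $\bt=(p_0,f_1,p_1,\ldots,f_N,p_N)$ one then has $\psi(\bt)=\sum_{k=1}^{N}\phi(f_k)$, independent of the representative cocycle.

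Next I would introduce the twisted transfer matrix $A_s\in\CC^{\cP\times\cP}$ with entries $(A_s)_{p,q}=\sum_{f\colon p\to q}e^{is\phi(f)}$, so that $A_0=A$ and
\[
\EE[\,e^{is\psi(\bT_N)}\,]\;=\;\frac{(A_s^N)_{\emptyplug,\emptyplug}}{(A^N)_{\emptyplug,\emptyplug}}.
\]
Since $A$ is primitive (by the positivity of $A^N$ recalled in the text), Perron--Frobenius gives a simple dominant eigenvalue $\lambda>0$ with strictly positive left and right eigenvectors, isolated from the remaining spectrum. Standard analytic perturbation theory then furnishes an eigenvalue $\lambda(s)$ of $A_s$ and associated spectral projector $P_s$, analytic in $s$ near $0$, with $\lambda(0)=\lambda$. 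Analysis of the ratio above therefore reduces to that of $(\lambda(s)/\lambda)^N$ times a bounded analytic factor.

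To extract a Gaussian limit I would expand $\log(\lambda(s)/\lambda)=i\mu s-\tfrac12\sigma^2 s^2+O(s^3)$ and treat the two coefficients separately. To see $\mu=0$, I would use the involution of $\cT(\cR_N)$ obtained by reflecting in the $z$-direction: it reverses the associated closed walk, hence inverts the group element it represents, hence negates $\psi$; the law of $\psi(\bT_N)$ is therefore symmetric about $0$, which forces the first derivative of $\lambda(s)$ at $s=0$ to vanish. The main obstacle is the non-degeneracy $\sigma^2>0$: this amounts to showing that $\phi$ is not cohomologous to $0$, for if $\phi=dh$ were a coboundary the sum $\sum_k\phi(f_k)$ along any closed walk would telescope to $0$, contradicting the surjectivity of $\psi$ onto $\ZZ$. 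Granting $\sigma^2>0$, substituting $s=\tau/\sqrt{N}$ gives $(\lambda(\tau/\sqrt{N})/\lambda)^N\to\exp(-\sigma^2\tau^2/2)$ uniformly on compact sets in $\tau$, and Lévy's continuity theorem yields convergence in distribution of $\psi(\bT_N)/\sqrt{N}$ to the centered Gaussian with density $C_0\exp(-C_1\tau^2)$, where $C_0=1/\sqrt{2\pi\sigma^2}$ and $C_1=1/(2\sigma^2)$; the integral form in the statement then follows from the continuity of the limiting density.
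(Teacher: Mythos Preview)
Your proposal is correct and is essentially the paper's proof: both pass to the twisted transfer matrix $A_s$ (the paper's $\alpha$), apply Perron--Frobenius plus analytic perturbation to the top eigenvalue $\lambda(s)$, observe that $\lambda(s)$ is even in $s$ (the paper via $\alpha(\bar z)=\alpha(z)^\top$, which is the same observation as your $z$-reflection), and conclude by convergence of characteristic functions. The only difference in flavor is the non-degeneracy step $\sigma^2>0$: you invoke the standard Nagaev--Guivarc'h dichotomy ($\sigma^2=0$ iff the cocycle is a coboundary, ruled out by surjectivity of $\psi$), whereas the paper gives an elementary self-contained argument (Lemmas~\ref{lemma:supercork} and~\ref{lemma:contraction}) exhibiting two consecutive positive coefficients in $(\alpha^N)_{p,\tilde p}$ and deducing $|\eta_1(t)|^N\le 1-\tilde c\,t^2$, hence $\eta_1''(0)<0$.
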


This result can be considered a variant of the Central Limit Theorem.
We prove it in Section \ref{section:normal}
using {\em characteristic functions}:
if $\bX$ is a real valued random variable,
set $\varphi_{\bX}(t) = \EE(\exp(it\bX))$.

Our final result is related but significantly different.

\begin{theo}
\label{theo:truenormal}
Let $\cD$, $\bT_N$, $G_{\cD}$, $\psi: G_{\cD} \to \ZZ$
and $C_0, C_1 \in (0,+\infty)$ be as in Theorem \ref{theo:normal}.
If $(t_N)$ is a sequence of integers with
$\lim_{N \to \infty} t_N/\sqrt{N} = \tau \in \RR$
then 
\[ \lim_{N \to \infty} \sqrt{N} \Prob[\psi(\bT_N) = t_N] =
C_0 \exp(-C_1 \tau^2). \]
\end{theo}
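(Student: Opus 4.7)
The plan is to upgrade Theorem \ref{theo:normal} from convergence in distribution to a local limit statement by combining Fourier inversion on $\ZZ$ with spectral perturbation of a transfer matrix. Consider the $\theta$-weighted adjacency matrix $A_\theta \in \CC^{\cP \times \cP}$ defined by
\[ (A_\theta)_{p_0,p_1} = \sum_{f^* \in \cT(\cD_{p_0,p_1})} \exp\bigl(i\theta\,\psi(p_0, f^*, p_1)\bigr), \]
so that $A_0 = A$, and set $\varphi_N(\theta) = \EE[\exp(i\theta\,\psi(\bT_N))] = (A_\theta^N)_{\emptyplug,\emptyplug} / (A_0^N)_{\emptyplug,\emptyplug}$. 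Since $\psi(\bT_N)$ is integer valued, Fourier inversion on the circle gives
\[ \Prob[\psi(\bT_N) = t_N] = \frac{1}{2\pi} \int_{-\pi}^{\pi} e^{-it_N\theta}\,\varphi_N(\theta)\,d\theta, \]
and after the substitution $\theta = s/\sqrt{N}$,
\[ \sqrt{N}\,\Prob[\psi(\bT_N) = t_N] = \frac{1}{2\pi} \int_{-\pi\sqrt{N}}^{\pi\sqrt{N}} e^{-i(t_N/\sqrt{N})s}\,\varphi_N\!\bigl(s/\sqrt{N}\bigr)\,ds. \]
The claimed limit $C_0\exp(-C_1\tau^2)$ is exactly the Fourier transform of a centred Gaussian evaluated at $\tau$, so the proof reduces to passage to the limit in this last integral.

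The spectral picture needed near $\theta = 0$ is already the engine of Theorem \ref{theo:normal}. Since all entries of $A^N$ are strictly positive for large $N$, the matrix $A_0$ is primitive, and Perron--Frobenius gives a simple real dominant eigenvalue $\lambda_0 > 0$ with positive left and right eigenvectors. Analytic perturbation theory then produces a simple dominant branch $\lambda(\theta)$ on a complex neighbourhood of $0$, and the Taylor computation behind the Gaussian characteristic function in Theorem \ref{theo:normal} gives
\[ \log\bigl(\lambda(\theta)/\lambda_0\bigr) = -\gamma\,\theta^2 + O(\theta^3), \]
with $\gamma > 0$ the variance of the limit Gaussian; the linear coefficient vanishes because the limit is centred. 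On the inner region $|s| \le R$ this expansion together with the spectral decomposition of $A_\theta$ gives the pointwise limit $\varphi_N(s/\sqrt{N}) \to \exp(-\gamma s^2)$ with a uniform majorant, and on the middle region $R \le |s| \le \delta\sqrt{N}$ (with $\delta$ small) a quadratic bound $|\varphi_N(s/\sqrt{N})| \le \exp(-c s^2)$ follows from the same expansion; dominated convergence then identifies the contribution of these two pieces with the tail of the Gaussian.

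The main obstacle, and the only step that genuinely goes beyond Theorem \ref{theo:normal}, is the outer region $\delta\sqrt{N} \le |s| \le \pi\sqrt{N}$, where convergence in distribution provides no information. What is needed is a uniform spectral gap
\[ \sup_{\theta \in [-\pi,\pi],\,|\theta|\ge\delta}\,\rho(A_\theta) \;\le\; \lambda_0\,(1-\eta) \]
for some $\eta=\eta(\delta)>0$. The inequality $\rho(A_\theta)\le\lambda_0$ is automatic from $|A_\theta|\le A_0$ entrywise, and Wielandt's refinement of Perron--Frobenius says that equality forces $A_\theta$ to be diagonally conjugate to $e^{i\alpha}A_0$ for some $\alpha$, and hence forces every closed loop $\gamma$ at $\emptyplug$ in $\cC_{\cD}$ to satisfy $\theta\,\psi(\gamma) \equiv \alpha\,|\gamma|\pmod{2\pi}$. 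Here the surjectivity of $\psi:G_{\cD}\to\ZZ$ is decisive: loops at $\emptyplug$ realise every integer value of $\psi$, and by concatenation one can exhibit two loops of the same length whose $\psi$-values differ by $1$, forcing $\theta \in 2\pi\ZZ$. Continuity of $\rho(A_\theta)$ on the compact set $\{\delta\le|\theta|\le\pi\}$ then upgrades pointwise strict inequality to a uniform gap, so $|\varphi_N(\theta)|\le C\,(1-\eta)^N$ on the outer region and its contribution to the rescaled integral is $O(\sqrt{N}\,(1-\eta)^N)=o(1)$. A mild parity bookkeeping, implicit already in Corollary \ref{coro:cofinite}, may require first conditioning on $N\bmod 2$ and splitting the Fourier integral into modes near $\theta=0$ and $\theta=\pi$; this only renormalises $C_0$ within a residue class without affecting the conclusion.
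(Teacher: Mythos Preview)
Your argument is correct and is in fact the classical Fourier--spectral route to a local limit theorem for additive functionals of a Markov chain. It is, however, \emph{not} the route the paper takes. The paper's proof of Theorem \ref{theo:truenormal} (Section \ref{section:truenormal}) is purely combinatorial: it fixes tilings $\bt_{\bullet,0},\bt_{\bullet,1}$ of some $\cR_{N_\bullet}$ with $\psi$-values differing by $1$, declares two tilings of $\cR_N$ equivalent if they agree outside the positions where such a block occurs, observes that within each equivalence class $\psi$ is binomially distributed, and then sandwiches $\Prob[\psi(\bT_N)=t_N]$ between averages over short $\tau$-intervals already controlled by Theorem \ref{theo:normal} (Lemmas \ref{lemma:normallimsup} and \ref{lemma:normalliminf}, with the auxiliary Lemma \ref{lemma:binomiallimsup}). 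No Fourier inversion, no outer-region estimate.

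Your approach has the virtue of being the textbook one and of reusing almost verbatim the spectral machinery already built for Theorem \ref{theo:normal}: the analytic branch $\eta_1$ and its quadratic expansion are exactly your inner and middle regions, and your outer spectral gap is precisely item 1 of Lemma \ref{lemma:contraction} (whose proof is essentially the Wielandt argument you sketch, carried out in Lemma \ref{lemma:supercork}). Since $\hat\alpha(t)$ is Hermitian, $\|\hat\alpha(t)^N\|=\rho(\hat\alpha(t))^N$ and the uniform gap on $\{|\theta|\ge\delta\}$ follows by continuity on a compact set, so the outer contribution is genuinely $O(\sqrt{N}(1-\eta)^N)$ as you claim. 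The paper's approach, by contrast, avoids any estimate of $\varphi_N$ away from $0$ and is arguably more robust in settings where the transfer operator is not normal; the price is the extra equivalence-class construction and the binomial lemma. Your final remark about parity bookkeeping at $\theta=\pi$ is unnecessary here: Lemma \ref{lemma:contraction} already gives $|\eta_k(\pi)|<1$ because $A$ is primitive, so there is no second peak.
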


Figure \ref{fig:tw60} may count as experimental evidence
in favor of Theorems \ref{theo:normal} and \ref{theo:truenormal}.
The proof of Theorem \ref{theo:truenormal} is given
in Section \ref{section:truenormal};
it uses Theorem \ref{theo:normal}.
It might at first seem that Theorem \ref{theo:truenormal} would follow
rather directly from Theorem \ref{theo:normal},
but that does not seem to be correct.
The probability
in the statement of Theorem \ref{theo:normal}
can be written as a summation
containing roughly $\sqrt{N}$ terms of the form $\Prob[\psi(\bT_N) = t]$,
$t_N^{-} < t < t_N^{+}$.
But it could happen, at least in principle,
that some of the terms are much larger or much smaller
than their neighbors.

An important example of surjective homomorphism is
$\psi = \Tw: \cC_{\cD} \to \ZZ$ provided $\cD$ is not trivial:
Theorem \ref{theo:normaltwist}
is a corollary of Theorem \ref{theo:truenormal}, as claimed.
Theorem \ref{theo:truenormal} also implies a stronger version
of the second claim in Corollary \ref{coro:cofinite}.

Theorems \ref{theo:cofinite}, \ref{theo:normal} and \ref{theo:truenormal}
depend upon the structure of the domino group $G_{\cD}$:
we need either a subgroup or a homomorphism with certain properties.
In this, they are more general than the results in the introduction,
which are stated for regular disks only.
There are applications of the stronger results
to cases where $\cD$ is not regular,
and we saw a sample in Corollary \ref{coro:cofinite}.
Theorem \ref{theo:components}, however,
assumes full knowledge of the domino group
and is therefore not clear what could be said in greater generality.

\section{Random tilings and proof of Theorem \ref{theo:cofinite}}
\label{section:randomtilings}

It follows from
the identification between tilings of corks and
paths in the complex $\cC_{\cD}$,
discussed in the previous section,
that any question concerning random tilings
admits a translation as a question concerning random paths in $\cC_{\cD}$.
We try to give elementary proofs.

\begin{lemma}
\label{lemma:pf}
Given a quadriculated region $\cD \subset \RR^2$ 
there exist $\lambda_1 > 0$, $c \in (0,1)$ and
a unit vector $v_1 \in \RR^\cP$
with positive coordinates such that (when $N \to \infty$)
\[ |\cT(\cR_{0,N;p,\tilde p})| =
(v_1)_{p} (v_1)_{\tilde p}\;\lambda_1^N(1+o(c^N)). \]
Furthermore,
for all $\epsilon > 0$ there exists $N_\epsilon \in \NN$
such that if $p_0, p_N \in \cP$,
$j > N_\epsilon$, $N > j + N_\epsilon$ and
$\bT$ is a random tiling of $\cR_{0,N;p_0,p_N}$
then 
\[ (v_1)_{p_j}^2 - \epsilon < \Prob[ \plug_j(\bT) = p_j ]
< (v_1)_{p_j}^2 + \epsilon. \]
\end{lemma}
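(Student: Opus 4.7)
The plan is to recognize Lemma \ref{lemma:pf} as a direct consequence of Perron--Frobenius theory applied to the adjacency matrix $A$ of the complex $\cC_{\cD}$, exploiting the identity $|\cT(\cR_{0,N;p_0,p_N})| = (A^N)_{p_0,p_N}$ already stated in Section \ref{section:review}. The matrix $A$ is nonnegative (entries are cardinalities of tiling sets), symmetric (because $\cD_{p_0,p_1} = \cD_{p_1,p_0}$, so $A_{p_0,p_1} = A_{p_1,p_0}$), and primitive: the excerpt asserts that for $N$ large enough $A^N$ has strictly positive entries, which is precisely the primitivity hypothesis.

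First I would apply Perron--Frobenius together with the spectral theorem for symmetric matrices to obtain an orthonormal eigenbasis $v_1,\ldots,v_r$ with real eigenvalues $\lambda_1 > |\lambda_2| \ge \cdots \ge |\lambda_r|$, where the top eigenvalue $\lambda_1$ is simple and the eigenvector $v_1$ can be chosen to have strictly positive coordinates. Writing the spectral decomposition
\[ A^N = \lambda_1^N v_1 v_1^\top + \sum_{k \ge 2} \lambda_k^N v_k v_k^\top, \]
and picking any $c$ with $|\lambda_2|/\lambda_1 < c < 1$, I obtain the entrywise asymptotic
\[ (A^N)_{p,\tilde p} = (v_1)_p (v_1)_{\tilde p}\,\lambda_1^N (1 + o(c^N)), \]
which is precisely the first claim. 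Positivity of $(v_1)_p (v_1)_{\tilde p}$ ensures that the leading constant does not vanish, so the relative error estimate is legitimate.

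For the second claim, I would use the Markov-chain factorization for random tilings viewed as random paths of length $N$ from $p_0$ to $p_N$ in $\cC_{\cD}$:
\[ \Prob[\plug_j(\bT) = p_j] = \frac{(A^j)_{p_0,p_j}\,(A^{N-j})_{p_j,p_N}}{(A^N)_{p_0,p_N}}. \]
Plugging in the asymptotic formula separately for the numerator factors (using that $j$ and $N-j$ are both at least $N_\epsilon$) and the denominator, the factors $(v_1)_{p_0}$, $(v_1)_{p_N}$ and $\lambda_1^N$ cancel, leaving $(v_1)_{p_j}^2$ up to a multiplicative error of the form $1 + O(c^{N_\epsilon})$. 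Choosing $N_\epsilon$ large enough (uniformly in $p_0, p_j, p_N$, which is allowed since $\cP$ is finite and $(v_1)_{p_j}^2$ is bounded) converts this multiplicative error into the additive bound $\pm\epsilon$ required by the statement.

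The main obstacle is simply to verify the hypotheses of Perron--Frobenius carefully: one must check that the primitivity statement from the excerpt gives a spectral gap $|\lambda_2| < \lambda_1$, and that the symmetry of $A$ is strong enough to force the same vector $v_1$ to appear in both entries of the asymptotic (so that the second claim really features $(v_1)_{p_j}^2$ and not a product of a left and a right Perron eigenvector). Both are immediate from the observations above, so I anticipate the remainder of the argument to be routine bookkeeping with finitely many uniform estimates over the finite set $\cP$.
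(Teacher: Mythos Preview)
Your proposal is correct and follows essentially the same route as the paper: apply Perron--Frobenius to the symmetric primitive matrix $A$, write $A^N$ as $\lambda_1^N v_1 v_1^\top$ plus an exponentially smaller remainder, and then use the path-factorization $\Prob[\plug_j(\bT)=p_j] = (A^j)_{p_0,p_j}(A^{N-j})_{p_j,p_N}/(A^N)_{p_0,p_N}$ to obtain the second claim. The paper phrases the decomposition as $A = \lambda_1(\Pi + C)$ with $\Pi = v_1 v_1^\ast$ and $C$ a strict contraction, but this is just your spectral sum rewritten.
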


\begin{proof}
We can apply the Perron-Frobenius Theorem
to the irreducible matrix $A$.
Let $\lambda_1 > 0$ be the eigenvalue of largest absolute value,
with associated unit eigenvector $v_1$.
We know that $\lambda_1$ is simple,
that the coordinates of $v_1$ can be taken to be positive
and that there exists $c \in (0,1)$ such that
$|\lambda_j| < c\lambda_1$ for any other eigenvalue $\lambda_j$.
Set $\Pi = v_1 v_1^{\ast}$, the orthogonal projection
onto the line spanned by $v_1$.
Write $A = \lambda_1 (\Pi + C)$
where $\Pi C = C\Pi = 0$ and $C$ is a strong contraction:
$|Cv| < c|v|$ for all $v \in \RR^{\cP} \smallsetminus \{0\}$.
We thus have
\[ \frac{ A^N }{\lambda_1^N} = \Pi + C^N = \Pi + o(c^N),
\quad
\frac{|\cT(\cR_{0,N;p,\tilde p})|}{\lambda_1^N} =
\frac{(A^N)_{p,\tilde p}}{\lambda_1^N} =
(v_1)_{p} (v_1)_{\tilde p} + o(c^N).
 \]
The first claim is thus proved.

For the second claim, we have
\[ \Prob[ \plug_j(\bT) = p_j ] =
\frac{|\cT(\cR_{0,j;p_0,p_j})| |\cT(\cR_{0,N-j;p_j,p_N})|}%
{|\cT(\cR_{0,N;p_0,p_N})|} 
= \frac{(A^j)_{p_0,p_j}(A^{(N-j)})_{p_j,p_N} }%
{ (A^N)_{p_0,p_N}}. \]
From the first claim, if $j$ and $N-j$ are large we have
\[  \Prob[ \plug_j(\bT) = p_j ] \approx
\frac{(v_1)_{p_0} (v_1)_{p_j} \lambda_1^j \cdot
(v_1)_{p_j} (v_1)_{p_N} \lambda_1^{(N-j)}}%
{(v_1)_{p_0} (v_1)_{p_N} \lambda_1^N} 
= (v_1)_{p_j}^2, \]
as desired.
\end{proof}




We are almost ready to present the proof of Theorem \ref{theo:cofinite}.
We present the proof for the right coset space $\{ Kg; g \in G_{\cD} \}$;
the other case requires only minor adjustments.

Recall that for well behaved path-connected spaces with base point $(Z,z_0)$
there exists a natural correspondence between
subgroups $K < \pi_1(Z;z_0)$ and connected covering spaces $\Pi: Z^K \to Z$.
Given $K$, the covering is characterized by the fact that
the image of the induced map
$\pi_1(\Pi): \pi_1(Z^K) \to \pi_1(Z)$ equals $K$.
Also, the degree of the covering equals the index of $K$ in $\pi_1(Z)$.

We review the crucial point of the construction:
consider two paths $\gamma_0, \gamma_1: [0,1] \to Z$
with $\gamma_0(0) = \gamma_1(0) = z_0$
and $\gamma_0(1) = \gamma_1(1)$.
For any covering space $\tilde Z$,
the paths can be uniquely lifted to
$\tilde\gamma_0, \tilde\gamma_1: [0,1] \to \tilde Z$
with $\tilde\gamma_0(0) = \tilde\gamma_1(0) = \tilde z_0$,
the base point of $\tilde Z$.
The covering $Z^K$ is characterized by the fact that 
$\tilde\gamma_0(1) = \tilde\gamma_1(1)$ if and only if
$[\gamma_0 \ast \gamma_1^{-1}] \in K$
(where $\ast$ stands for concatenation).
There is a natural correspondence between right cosets $Kg$,
$g \in \pi_1(Z;z_0)$,
and preimages under $\Pi$ of the base point $z_0$.

\begin{proof}[Proof of Theorem \ref{theo:cofinite}]
Apply this construction in our case to define
a covering space $\Pi: \cC_{\cD}^K \to \cC_{\cD}$ of degree $n$.
By the usual construction, $\cC_{\cD}^K$ is a finite $2$-complex.
Let $\cP^K$ be the set of vertices of $\cC_{\cD}^K$:
we call these {\em lifted plugs}.
The base point $\bfp_{\circ}^{K} \in \cP^K$ of $\cC_{\cD}^K$
is a fixed preimage under $\Pi$ of
the empty plug $\emptyplug \in \cP$.

Tilings of $\cR_N$ can be lifted to paths of length $N$ in $\cC_{\cD}^K$,
starting at the base point $\bfp_{\circ}^{K} \in \cP^K$
and ending at any preimage of $\emptyplug$.
There exists a natural identification
$\Pi^{-1}[\{\emptyplug\}] \approx G_{\cD}/K$:
for a tiling $\bt \in \cT(\cR_N)$,
$\psi(\bt) \in G_{\cD}/K$ is the final point of the corresponding
lifted path  in $\cC_{\cD}^K$.

The construction in the proof of Lemma \ref{lemma:pf}
(using the Perron-Frobenius theorem)
can be applied to paths in the complex $\cC_{\cD}^K$.
Moreover, the eigenvalue of largest absolute value is the same number
$\lambda_1 > 0$ (for both $\cC_{\cD}$ and $\cC^K_{\cD}$)
and the corresponding eigenvector $v_1^K$ is the lift of $v_1$:
for $p \in \cP^K$ we have $(v_1^K)_{p} = (v_1)_{\Pi(p)}/\sqrt{n}$.

From Lemma \ref{lemma:pf},
if $p_0, p_1 \in \Pi^{-1}[\{\emptyplug\}] \approx G_{\cD}/K$
we have
\[ |\{\bt \in \cT(\cR_{0,N;p,\tilde p}), \psi(\bt) = p_i\}| =
\frac{1}{n}\;(v_1)_{\emptyplug}\;\lambda_1^N(1+o(c^N)) \]
for both $i = 0$ and $i = 1$.
Thus
\[ \Prob[\psi(\bT) = p_0] = \frac{1}{n}\;(1+o(c^N)) \]
for all $p_0 \in G_{\cD}/K$,
completing the proof.
\end{proof}

The reader will recall from \cite{regulardisk}
that vertical floors are very useful in our constructions.
Let $\nvert(\bt)$ be the number of vertical floors of $\bt$.
The following result shows that for almost all tilings
vertical floors are relatively abundant.

\begin{lemma}
\label{lemma:manyverticalfloors}
Consider $\cD$ fixed; let $\bT$ be a random tiling of $\cR_N$.
There exist positive constants $C, c \in (0,1)$ such that
\[ \Prob[\nvert(\bT) < CN] = o(c^N). \]
\end{lemma}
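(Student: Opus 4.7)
The plan is a Chernoff-style large-deviations estimate based on a tilted transfer matrix, piggy-backing on the Perron-Frobenius setup already developed for Lemma~\ref{lemma:pf}. For $\beta \in \RR$, define $A_\beta \in \RR^{\cP \times \cP}$ by $(A_\beta)_{p,\cD \setminus p} = e^\beta$ and $(A_\beta)_{p,p'} = A_{p,p'}$ for $p' \neq \cD \setminus p$; this uses that $A_{p,\cD \setminus p} = 1$, since the empty region $\cD_{p,\cD \setminus p} = \emptyset$ has a unique (empty) tiling. A floor $(p_{k-1},f^\ast_k,p_k)$ is vertical precisely when $p_k = \cD \setminus p_{k-1}$, so
\[ (A_\beta^N)_{\emptyplug,\emptyplug} \;=\; \sum_{\bt \in \cT(\cR_N)} e^{\beta\,\nvert(\bt)}. \]
The matrix $A_\beta$ has the same nonzero pattern as $A$, so it is primitive, and Perron-Frobenius yields a simple largest eigenvalue $\lambda(\beta) > 0$ depending real-analytically on $\beta$, with $\lambda(0) = \lambda_1$.

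Next, I compute $\mu := \lambda'(0)/\lambda_1$ and verify $\mu > 0$. The matrix $A$ is symmetric, since reversing a floor $(p_0,f^\ast,p_1)$ vertically swaps its two plugs without altering the horizontal tiling $f^\ast$. Hence $v_1$ is both left and right Perron eigenvector, and standard first-order perturbation theory gives
\[ \mu \;=\; \frac{v_1^\top (\partial_\beta A_\beta|_{\beta=0})\, v_1}{\lambda_1\,v_1^\top v_1} \;=\; \frac{1}{\lambda_1} \sum_{p \in \cP} (v_1)_p\,(v_1)_{\cD \setminus p}. \]
Every entry of $v_1$ is strictly positive by Lemma~\ref{lemma:pf}, and the sum is nonempty, so $\mu > 0$. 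Fix any $C$ with $0 < C < \min(\mu,1)$.

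For $\theta > 0$, combining the Chernoff inequality with Lemma~\ref{lemma:pf} applied to both $A$ and $A_{-\theta}$ produces
\[ \Prob[\nvert(\bT) < CN] \;\le\; e^{\theta C N}\,\EE\bigl[e^{-\theta\,\nvert(\bT)}\bigr] \;=\; e^{\theta C N}\,\frac{(A_{-\theta}^N)_{\emptyplug,\emptyplug}}{(A^N)_{\emptyplug,\emptyplug}} \;=\; O\!\left(\bigl(e^{\theta C}\,\lambda(-\theta)/\lambda_1\bigr)^N\right). \]
By analyticity, $\log \lambda(-\theta) - \log \lambda_1 = -\mu\theta + O(\theta^2)$ as $\theta \to 0^+$, hence
\[ e^{\theta C}\,\lambda(-\theta)/\lambda_1 \;=\; \exp\!\bigl(\theta(C-\mu) + O(\theta^2)\bigr) \;<\; 1 \]
for all sufficiently small $\theta > 0$. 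Fixing such a $\theta$ and choosing $c \in (0,1)$ slightly above $e^{\theta C}\,\lambda(-\theta)/\lambda_1$ turns the $O(\cdot)$ into the $o(c^N)$ required by the lemma.

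The one genuinely non-formal point is the inequality $\mu > 0$, which reduces to the algebraic observation that every plug $p$ has a unique complement $\cD \setminus p$ and that the corresponding entry $A_{p,\cD \setminus p}$ is nonzero; combined with entrywise positivity of $v_1$ from Perron-Frobenius, this forces the vertical-floor contribution to be a positive fraction of the total. Everything else is routine large-deviations and Perron-Frobenius machinery built on top of Lemma~\ref{lemma:pf}.
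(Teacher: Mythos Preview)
Your proof is correct and takes a genuinely different route from the paper's. The paper argues directly: using Lemma~\ref{lemma:pf} it shows that, conditionally on the tiling constructed so far, the plug at the next checkpoint $N_j = jN_\epsilon$ equals the full plug $\fullplug$ with probability at least $\epsilon$, so the number of such occurrences stochastically dominates a Binomial with $\lfloor N/N_\epsilon\rfloor$ trials; a standard tail bound then finishes. Your argument instead tilts the transfer matrix, identifies $(A_\beta^N)_{\emptyplug,\emptyplug}$ as the moment generating function of $\nvert(\bT)$, and runs a Cram\'er--Chernoff bound using analyticity of the Perron eigenvalue and the perturbation formula $\mu = \lambda_1^{-1}\sum_p (v_1)_p (v_1)_{\cD\setminus p} > 0$.

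The paper's approach is more elementary in that it avoids eigenvalue perturbation theory, but as written it is somewhat sketchy (in particular the conditional application of Lemma~\ref{lemma:pf} at relative distance exactly $N_\epsilon$ needs a small adjustment). Your approach is cleaner and more systematic: it yields the explicit threshold $\mu$, which is in fact the limiting expected density of vertical floors, and the same tilting machinery would apply verbatim to any floor statistic encoded by reweighting entries of $A$. The cost is the appeal to simple-eigenvalue perturbation, which is entirely standard here since $\lambda_1$ is simple by Perron--Frobenius.
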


In the proof below the reader should keep in mind
that we are not trying to give good estimates of the constants
$C$ and $c$.

\begin{proof}
Let $p_\bullet = \cD \in \cP$ be the plug with all squares marked.
Let $v_1 \in \RR^{\cP}$ be the unit vector with positive coordinates
introduced in Lemma \ref{lemma:pf}.
Consider $\epsilon = \frac12 \min_{p \in \cP} (v_1)_p^2 > 0$.
Again from Lemma \ref{lemma:pf}, 
let $N_\epsilon$ be such that
for all $j > N_\epsilon$, $\tilde N > j+N_\epsilon$
and for all $p_0, p_{\tilde N} \in \cP$ and for $\tilde\bT$
a random tiling of $\cT_{\tilde N;p_0, p_{\tilde N}}$ we have
$\Prob[p_j(\tilde\bT) = p_\bullet] > \epsilon$.
Take $C = \epsilon/(2N_\epsilon)$.

We imagine that $\bT$ is created floor by floor
(with the correct conditional probabilities).
Let $N_j = jN_\epsilon$.
Assuming $\bT$ constructed up to floor $N_{j-1}$
we have $\Prob[\plug_{N_j}(\bT)=p_\bullet|\cdots] > \epsilon$
(where the dots stand for the description of
the already constructed part of $\bT$).
We thus have at worst $\lfloor N/N_\epsilon \rfloor$ floors,
each with conditional probability at least $\epsilon$ of being vertical.
The claim now follows.
\end{proof}


\section{Cocycles}
\label{section:cocycle}

Let $\cD$ be a nontrivial quadriculated disk.
Let $\psi: G_{\cD} \to \ZZ$ be a surjective homomorphism
so that $\psi \in \Hom(\pi_1(\cC_{\cD});\ZZ)$.
It is a well known fact that there exists a natural isomorphism
between $\Hom(\pi_1(\cC_{\cD});\ZZ)$ and the cohomology space $H^1(\cC_{\cD})$
so that we may interpret $\psi$ as an element of $H^1(\cC_{\cD})$.
We review this construction, which will be important for us in any case.

Let $K < G_{\cD}$ be the kernel of $\psi$.
As reviewed in the previous section,
let $\Pi: \cC_{\cD}^K \to \cC_{\cD}$ be the covering space
corresponding to $K$.
The space $\cC_{\cD}^K$ is an infinite (but locally finite) $2$-complex;
let $\cP^K$ be its set of vertices
(plugs with an extra discrete information).
The map $\psi$ induces a natural correspondence between $\ZZ$
and the set of preimages $(\bfp_{\circ}^{(k)})_{k \in \ZZ}$ of 
the base point $\emptyplug \in \cC_{\cD}$.
Indeed, a tiling $\bt \in \cT(\cR_N)$ defines a closed path in $\cC_{\cD}$
which lifts to a path in $\cC_{\cD}^K$
from its base point $\bfp_{\circ}^{(0)} = \bfp_{\circ}^{K}$
to some $p \in \cP^K$ with $\Pi(p) = \emptyplug$:
if $\psi(\bt) = k$, set $p = \bfp_{\circ}^{(k)}$.
Notice that this definition is consistent:
$\psi(\bt_0) = \psi(\bt_1)$ if and only if $\psi(\bt_0 \ast \bt_1^{-1}) = 0$
if and only if $[\bt_0 \ast \bt_1^{-1}] \in K$
if and only if the endpoints of
the lifted paths to $\cC_{\cD}^K$ coincide.

More generally, there is an action of $\ZZ$ on $\cC_{\cD}^K$
generated by the deck transformation $\sigma: \cC_{\cD}^K \to \cC_{\cD}^K$
satisfying $\sigma(\bfp_{\circ}^{(0)}) = \bfp_{\circ}^{(1)}$.
Recall that a deck transformation of the covering map 
$\Pi: \cC_{\cD}^K \to \cC_{\cD}$ 
is a map $\Psi: \cC_{\cD}^K \to \cC_{\cD}^K$ satisfying
$\Pi \circ \Psi = \Pi$.
By construction, we have
$\sigma(\bfp_{\circ}^{(k)}) = \bfp_{\circ}^{(k+1)}$ (for all $k \in \ZZ$).
Let $\tilde f_0 = (p_{00}, f_0^{\ast}, p_{01})$ and
$\tilde f_1 = (p_{10}, f_1^{\ast}, p_{11})$
be oriented edges of $\cC_{\cD}^K$
so that $p_{ij} \in \cP^K$.
If $\Pi(\tilde f_0) = \Pi(\tilde f_1)$ then
there exists $k \in \ZZ$ such that
$\tilde f_1 = \sigma^k(\tilde f_0)$
(so that, in particular, $p_{1j} = \sigma^k(p_{0j})$).

We construct a function $\xi: \cP^K \to \RR$ such that,
for every $p \in \cP^K$ and every $k \in \ZZ$ we have 
$\xi(\sigma^k(p)) - \xi(p) = k$.
Indeed,
for each $p \in \cP$ choose a $\Pi$-preimage $\tilde p \in \cP^K$
and arbitrarily define $\xi(\tilde p) \in \RR$.
Next, for all $k \in \ZZ$ define
$\xi(\sigma^{k}(\tilde p)) = \xi(\tilde p) + k$,
completing the construction of $\xi$.
Notice that if $\tilde f_0 = (p_{00}, f_0^{\ast}, p_{01})$ and
$\tilde f_1 = (p_{10}, f_1^{\ast}, p_{11})$ are edges of $\cC_{\cD}^K$
and satisfy
$\Pi(\tilde f_0) = \Pi(\tilde f_1)$
then $\xi(p_{01}) - \xi(p_{00}) = \xi(p_{11}) - \xi(p_{10})$.
For any oriented edge $f$ of $\cC_{\cD}$, define
$d\xi(f) = \xi(p_1) - \xi(p_0)$.
The function $d\xi$ is well defined.
In the language of cohomology,
$d\xi \in Z^1(\cC_{\cD}) \subset C^1(\cC_{\cD})$
is a cocycle and
defines the same element of the cohomology space $H^1(\cC_{\cD})$
as the original $\psi$ under the natural isomorphism mentioned above.
Also, if $\xi_0$ and $\xi_1$ are different functions
obtained by the construction above
then $\xi_1 - \xi_0$ is a well defined function from $\cP$ to $\RR$
so that $d\xi_1 - d\xi_0 = d(\xi_1-\xi_0) \in B^1(\cC_{\cD})$,
consistently with the fact that they define the same element
of the cohomology $H^1(\cC_{\cD})$.
The function $\xi$ can be constructed so as to assume values
in $\frac1m \ZZ$ (for some positive integer $m$) or even in $\ZZ$.

\begin{example}
\label{example:twist}
For $\psi = \Tw$ we can take
$\psi_{\xi}((p,f,\tilde p)) = \tau^u(f;p,\tilde p) \in \frac14 \ZZ$
as defined in \cite{regulardisk}, Section 6, just above Lemma 6.1.
Here the vector $u \in \{\pm e_1, \pm e_2\} \subset \RR^3$
is arbitrary but fixed;
$u = e_1$ and $u = e_2$ correspond to different functions $\xi$.

Alternatively, we know from Lemma 3.1 
in \cite{regulardisk} that for every plug $p$
there exists a tiling $\bt_p \in \cT(\cR_{0,2|\cD|;\emptyplug,p})$:
fix one such tiling for every plug $p$
(recall that $|\cD| \in 2\NN$ is the number of squares of $\cD$).
Given a floor $f = (p_0,f^\ast,p_1)$,
let $\bt_f \in \cT(\cR_{4|\cD|+1})$ be obtained
by concatenating $\bt_{p_0}$, $f$ and $\bt_{p_1}^{-1}$.
We can then take
$\psi_{\xi}(f) = \Tw(\bt_f) \in \ZZ$,
thus defining a cocycle $\psi_{\xi} \in Z^1(\cC_{\cD})$
and the function $\xi$.

For either construction, if $\bt \in \cT(\cR_N)$,
$\bt = (p_0,f_1,p_1,\ldots,f_N,p_N)$
(with $p_0 = p_N = \emptyplug$)
then 
\begin{equation}
\label{equation:twistcocycle}
\Tw(\bt) = \sum_{0<k\le N} \psi_{\xi}(f_k). 
\end{equation}
This is consistent with the formulas for the twist seen,
for instance, in \cite{regulardisk}.
\end{example}

Assume below for simplicity that $\xi$ and $d\xi$
assume values in $\frac1m \ZZ$.
Let $\tilde q$ be a formal variable; let $q = \tilde q^m$.
Let $\ZZ[\tilde q, \tilde q^{-1}]$
be the ring of Laurent polynomials in the variable $\tilde q$.
We define the {\em $\psi$-adjacency matrix} or
{\em $\xi$-adjacency matrix}
$\alpha \in (\ZZ[\tilde q, \tilde q^{-1}])^{\cP\times\cP}$.
For disjoint $p, \tilde p \in \cP$, let
\[ \alpha_{(p,\tilde p)} =
\sum_{f \in \cD_{p,\tilde p}} q^{d\xi((p,f,\tilde p))} =
\sum_{f \in \cD_{p,\tilde p}} \tilde q^{(m\,d\xi((p,f,\tilde p)))}
\in \ZZ[\tilde q, \tilde q^{-1}]; \]
if $p$ and $\tilde p$ are not disjoint,
define $\alpha_{(p,\tilde p)} = 0$.
Alternatively, we may regard $\alpha$ as a function
$\alpha: \CC \smallsetminus \{0\} \to \CC^{\cP\times\cP}$
where $\alpha(z)$ is obtained from $\alpha$
by substituting $z$ for $\tilde q$;
notice that $\alpha(1) = A \in \ZZ^{\cP\times\cP}$
is the adjacency matrix of $\cC_{\cD}$,
as defined in Section \ref{section:review}.
Also,
\begin{equation}
\label{equation:P}
(\alpha^N)_{(\emptyplug,\emptyplug)} = P_N(q) =
\sum_{\bft \in \cT(\cR_N)} q^{\psi(\bft)} \in
\ZZ[q,q^{-1}] \subset \ZZ[\tilde q, \tilde q^{-1}]. 
\end{equation}
The Laurent polynomial $P_N$ will be used again.
An element $P \in \ZZ[\tilde q, \tilde q^{-1}]$, $P \ne 0$,
is a {\em monomial} if it has precisely one nonzero coefficient.

\begin{lemma}
\label{lemma:supercork}
Let $\cD$ be a nontrivial quadriculated disk.
Let $A \in \ZZ^{\cP\times\cP}$ be the adjacency matrix of $\cC_{\cD}$.
For a surjective homomorphism $\psi: G_{\cD} \to \ZZ$,
let $\alpha \in (\ZZ[\tilde q,\tilde q^{-1}])^{\cP\times\cP}$
be the $\psi$-adjacency matrix constructed above.

If $z \in \Ss^1 \subset \CC$, $N \in \NN$ and $p, \tilde p \in \cP$
then
$|((\alpha(z))^N)_{p,\tilde p}| \le |(A^N)_{p,\tilde p}|$.
Furthermore, there exists $N_0$
(depending on $\cD$ and $\psi$ only)
such that if $N \ge N_0$ then
$(\alpha^N)_{p,\tilde p} \in \ZZ[\tilde q, \tilde q^{-1}]$
is neither zero nor a monomial;
moreover, there exists $k \in \ZZ$ such that the coefficients
of $\tilde q^k$ and $\tilde q^{(k+m)}$ are both positive.
In this case, if $z = \exp(\frac{it}{m})$ with
$t \in [-\pi,0) \cup (0,\pi]$ then the inequality is strict:
$|((\alpha(z))^N)_{p,\tilde p}| < |(A^N)_{p,\tilde p}|$.
\end{lemma}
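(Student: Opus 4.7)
The plan is to first prove the non-strict inequality directly by expanding the matrix power and applying the triangle inequality, then establish the combinatorial coefficient claim for $N$ large by exploiting surjectivity of $\psi$, and finally deduce the strict inequality from that coefficient claim.

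For the first inequality, I expand $(\alpha(z)^N)_{p,\tilde p}$ combinatorially: the $(p,\tilde p)$-entry of $\alpha^N$ is a sum over length-$N$ walks in $\cC_{\cD}$ from $p$ to $\tilde p$, which correspond bijectively to the tilings in $\cT(\cR_{0,N;p,\tilde p})$. Each tiling $\bt = (p_0, f_1, p_1, \ldots, f_N, p_N)$ contributes $z^{m s(\bt)}$ with $s(\bt) := \sum_{k=1}^N d\xi((p_{k-1}, f_k, p_k)) \in \frac{1}{m}\ZZ$; for $p = \tilde p = \emptyplug$ this specialises to $\psi(\bt)$ as in equation (\ref{equation:twistcocycle}). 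Since $|z|=1$, each contribution has modulus one, so the triangle inequality yields $|((\alpha(z))^N)_{p,\tilde p}| \le |\cT(\cR_{0,N;p,\tilde p})| = (A^N)_{p,\tilde p}$, which is the first claim.

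For the coefficient claim, for $N$ sufficiently large I would exhibit two tilings $\bt_1, \bt_2 \in \cT(\cR_{0,N;p,\tilde p})$ with $s(\bt_1) - s(\bt_2) = 1$. Surjectivity of $\psi: G_{\cD} \to \ZZ$ produces a tiling $\bt_\gamma \in \cT(\cR_{N_\gamma})$ with $\psi(\bt_\gamma) = 1$; after concatenating a vertical block (which represents the identity in $G_{\cD}$ and so has $\psi$-value zero), I may assume $N_\gamma$ is even. By Lemma~3.1 of \cite{regulardisk}, recalled in Example \ref{example:twist}, there exist tilings $\bt_p \in \cT(\cR_{0,2|\cD|;\emptyplug,p})$ and $\bt_{\tilde p} \in \cT(\cR_{0,2|\cD|;\emptyplug,\tilde p})$; let $\bt_p^{-1}$ denote the $z$-reversal of $\bt_p$, a tiling from $p$ to $\emptyplug$. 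Setting $N_0 := 4|\cD| + N_\gamma$, for all $N \ge N_0$ of the appropriate parity I form
\[
\bt_1 = \bt_p^{-1} \ast \bt_{\nvert, N-N_0} \ast \bt_\gamma \ast \bt_{\tilde p},
\qquad
\bt_2 = \bt_p^{-1} \ast \bt_{\nvert, N-N_0+N_\gamma} \ast \bt_{\tilde p},
\]
both tiling $\cR_{0,N;p,\tilde p}$. Each vertical block has $\psi$-value zero, so $s(\bt_1) - s(\bt_2) = \psi(\bt_\gamma) = 1$ and therefore $m s(\bt_1) - m s(\bt_2) = m$. Consequently both $\tilde q^{m s(\bt_2)}$ and $\tilde q^{m s(\bt_2)+m}$ appear with strictly positive coefficient in $(\alpha^N)_{p,\tilde p}$, which is therefore neither zero nor a monomial.

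For the strict inequality, with $\bt_1, \bt_2$ as above we have $z^{m s(\bt_1)}/z^{m s(\bt_2)} = z^m = e^{it}$, which differs from $1$ for every $t \in [-\pi, 0) \cup (0, \pi]$. The unit complex numbers $\{z^{m s(\bt)} : \bt \in \cT(\cR_{0,N;p,\tilde p})\}$ thus do not all share a common argument, so the triangle inequality above is strict: $|((\alpha(z))^N)_{p,\tilde p}| < (A^N)_{p,\tilde p}$. The main obstacle is the bookkeeping in the coefficient step: verifying that the concatenations have matching endpoints, that each vertical block has even length (so that $\bt_{\nvert,*}$ exists), and that the parity of $N$ is the one for which $\cT(\cR_{0,N;p,\tilde p})$ is nonempty in the first place (guaranteed for large $N$ by Lemma \ref{lemma:pf}). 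Once these ingredients are in hand, the remaining argument is purely algebraic.
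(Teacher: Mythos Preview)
Your overall approach coincides with the paper's: expand $(\alpha(z)^N)_{p,\tilde p}$ as a sum of unimodular terms indexed by tilings of the cork, apply the triangle inequality for the non-strict bound, then exhibit two tilings of $\cR_{0,N;p,\tilde p}$ whose $\psi_\xi$-values differ by exactly $1$ (using surjectivity of $\psi$ and Lemma~3.1 of \cite{regulardisk}) to get two positive coefficients $m$ apart, and conclude strictness from $z^m=e^{it}\ne 1$.

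There is one genuine gap in your coefficient step. Your spacers $\bt_{\nvert,N-N_0}$ and $\bt_{\nvert,N-N_0+N_\gamma}$ exist only when $N-N_0$ is even, so your pair $(\bt_1,\bt_2)$ is constructed for only one parity of $N$. You try to dispose of the other parity by claiming that $\cT(\cR_{0,N;p,\tilde p})$ is then empty, citing Lemma~\ref{lemma:pf}; but this is backwards. The paper records (just above Lemma~\ref{lemma:pf}) that for all sufficiently large $N$ every entry of $A^N$ is strictly positive, so the cork is tileable for \emph{both} parities, and the lemma genuinely requires the conclusion for every $N\ge N_0$. The repair is easy: since $A$ is primitive there is a closed walk at $\emptyplug$ of some odd length $\ell$; insert a fixed such block into both $\bt_1$ and $\bt_2$ when $N-N_0$ is odd (this adds the same quantity to $s(\bt_1)$ and $s(\bt_2)$, so their difference is still $1$), and enlarge $N_0$ accordingly. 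The paper organizes the construction slightly differently, keeping the two middle blocks of a common fixed length and absorbing all variation of $N$ into the right-hand cap $\bt_b$, but the underlying idea is the same.
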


\begin{proof}
The first claim (i.e., the non strict inequality) follows from the fact that
$(\alpha^N)_{p,\tilde p}$ is a Laurent polynomial with natural coefficients.
For the second claim,
since $\psi$ is surjective, there exists even $N_0 > 4|\cD|$
such that there exist
tilings $\bt_0$ and $\bt_1$ of $\cR_{2|\cD|,N_0-2|\cD|;\emptyplug,\emptyplug}$ 
with $\psi(\bt_1) - \Tw(\psi_0) = 1$.
Use Lemma 3.1 in \cite{regulardisk}
to construct arbitrary tilings
$\bt_a$ and $\bt_b$ of the corks
$\cR_{0,2|\cD|;p,\emptyplug}$ and $\cR_{N_0-2|\cD|,N;\emptyplug,\tilde p}$,
respectively.
Concatenate $\bt_a$, $\bt_i$ and $\bt_b$ to define
$\tilde\bt_i \in \cT(\cR_{0,N;p,\tilde p})$.
The tilings $\tilde t_0$ and $\tilde t_1$ contribute with
$\tilde q^k$ and $\tilde q^{(k+m)}$ to 
$(\alpha^N)_{(p,\tilde p)}$, respectively,
proving that $(\alpha^N)_{(p,\tilde p)}$
is neither zero nor a monomial.
Moreover, for $z$ as in the statement,
$|z^k + z^{(k+m)}| < 2$, proving the strict inequality.
\end{proof}



\section{Proof of Theorem \ref{theo:normal}}
\label{section:normal}

We now prepare to prove Theorem \ref{theo:normal}.
Consider a fixed quadriculated disk $\cD$ and
a surjective homomorphism $\psi: G_{\cD} \to \ZZ$ with kernel
$K < G_{\cD}$ a normal subgroup of infinite index.
Recall from Equation \eqref{equation:P}
the definition of the (Laurent) polynomial $P_N \in  \ZZ[q,q^{-1}]$:
for $\cR_N = \cD \times [0,N]$,
\[ P_N(q) = \sum_{\bft \in \cT(\cR_N)} q^{\psi(\bft)}
= \sum_{k \in \ZZ} |\{ \bt \in \cT(\cR_N), \psi(\bt) = k\}|\,q^k,
\qquad
P_N(1) = |\cT(\cR_N)|. \]
Thus, for instance, if $\cD = [0,4]^2$, $\psi = \Tw$ and $N = 4$ then
\begin{align*}
P_4(q) &= 4413212553 + 310188792 (q + q^{-1}) + \\
&\qquad 8955822 (q^2 + q^{-2}) + 15144 (q^3 + q^{-3}) + 18 (q^4 + q^{-4}).
\end{align*}
For the same $\cD$, $\psi$ and $N = 60$,
$P_{60}$ has terms from $673511306237603716\,q^{-88}$
to $673511306237603716\,q^{88}$;
the largest coefficients are of the order of $10^{156}$
for $q^k$, $|k| \le 10$ (i.e., $k = \Tw(\bft)$),
and are shown in Figure \ref{fig:tw60}.

Our result can be considered a variant of the Central Limit Theorem
and its proof will be similar.
Let $\bT_N$ be a random tiling of $\cR_N$ (with uniform distribution);
we also consider the integer valued random variable
$\Psi_N = \psi(\bT_N)$.
We then have
\[ \EE\left(q^{\Psi_N}\right) = \frac{P_N(q)}{P_N(1)} 
= \sum_{k \in \ZZ} \Prob[\psi(\bT_N) = k]\,q^k, \qquad
\varphi_{\Psi_N}(t) = \frac{P_N(e^{it})}{P_N(1)}; \]
here $\varphi_{\Psi_N}$ is the characteristic function
of the random variable $\Psi_N$.
We prove that these characteristic functions,
with a minor adjustment,
converge uniformly over compacts to
a gaussian when $N$ goes to infinity:
this implies Theorem \ref{theo:normal}.

We introduce some more notation.
For $z \in \Ss^1 \subset \CC$,
the evaluation of the $\psi$-adjacency matrix $\alpha(z)$
(obtained by substituting $z$ for $\tilde q$)
is Hermitian and therefore diagonalizable with real spectrum
and unit complex orthogonal eigenvectors.
Recall that $\lambda_1 > 0$ is the eigenvalue of largest absolute value
of $A = \alpha(1)$.
Notice that $\alpha(\bar z) = \overline{\alpha(z)} = (\alpha(z))^{\top}$.
Let 
\[ 
\hat\alpha: \RR \to \CC^{\cP\times\cP}, \qquad
\hat\alpha(t) =
\frac{1}{\lambda_1}\;\alpha\left(\exp\left(\frac{it}{m}\right)\right); \]
let $\eta_1(t) \ge \eta_2(t) \ge \cdots \ge \eta_{|\cP|}(t)$
be the (real) eigenvalues of $\hat\alpha(t)$.

The functions $\eta_k: \RR \to \RR$ are clearly continuous.
Also, $\eta_1(0) = 1$ and,
from Lemma \ref{lemma:pf}, $\eta_k(0) \in (-1,1)$ for $k > 1$.
It follows from Rellich theorem
(see \cite{ReedSimon}, ch. 12 sec. 1 and \cite{Wimmer})
that the (real) eigenvalues of $\hat\alpha(t)$
can be locally defined
as (real) analytic functions of $t \in \RR$
(but not necessarily in order).
In particular, the restriction
$\eta_1: (-\epsilon,\epsilon) \to \RR$ is (real) analytic.
We need a little more information about the functions $\eta_k$,
particularly near $t = 0$.

\begin{lemma}
\label{lemma:contraction}
Let $\cD$ be a quadriculated disk
and $\psi: G_{\cD} \to \ZZ$ be a surjective homomorphism;
let $\eta_k: \RR \to \RR$, $1 \le k \le |\cP|$,
be the continuous maps above.
\begin{enumerate}
\item{ If $t \in [-\pi,\pi] \smallsetminus \{0\}$
then $\eta_k(t) \in (-1,1)$ (for all $k$).}
\item{ The function $\eta_1$
satisfies $\eta_1(0) = 1$, $\eta_1'(0) = 0$ and
$\eta_1''(0) < 0$.}
\item{ There exist $\epsilon > 0$, $c \in (0,1)$ and
constants $a_2 = -\eta_1''(0)/2 > 0$ and $a_4^{-} < a_4^{+}$ such that
$\eta_1$ is real analytic in $(-\epsilon,\epsilon)$ and
for all $t \in (-\epsilon,\epsilon)$ we have
\begin{equation}
\label{eq:tayloreta}
c < \exp(- a_2 t^2 + a_4^{-} t^4) \le \eta_1(t) \le 
\exp(- a_2 t^2 + a_4^{+} t^4) \le 1
\end{equation}
and, for $k \ne 1$, $\eta_k(t) \in (-c,c)$.}
\end{enumerate}
\end{lemma}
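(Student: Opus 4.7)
For Part 1, I would argue by contradiction. Suppose $|\eta_k(t)| = 1$ for some $t \in [-\pi,\pi] \smallsetminus \{0\}$, and let $v \in \CC^\cP$ be a unit eigenvector with $\hat\alpha(t) v = \eta_k(t) v$; then $|v^\ast \hat\alpha(t)^N v| = 1$ for every $N$. Fix $N \ge N_0$ large enough that also $A^N$ is entrywise positive (by irreducibility and Lemma \ref{lemma:pf}). Lemma \ref{lemma:supercork} then yields the strict bound $|(\hat\alpha(t)^N)_{p,\tilde p}| < (A^N/\lambda_1^N)_{p,\tilde p}$ for every $(p, \tilde p)$. Since $A$ is symmetric one has $\|A/\lambda_1\|_{\mathrm{op}} = 1$, so the triangle inequality gives
\[ 1 = |v^\ast \hat\alpha(t)^N v| \le \sum_{p,\tilde p} |v_p| |v_{\tilde p}| |(\hat\alpha(t)^N)_{p,\tilde p}| < \langle |v|, (A^N/\lambda_1^N) |v|\rangle \le 1, \]
a contradiction.

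For Part 2, the equality $\eta_1(0) = 1$ is immediate from $\hat\alpha(0) = A/\lambda_1$ and Perron--Frobenius. The identity $\hat\alpha(-t) = \overline{\hat\alpha(t)} = \hat\alpha(t)^\top$ shows that $\hat\alpha(-t)$ and $\hat\alpha(t)$ are cospectral, so each $\eta_k$ is even; in particular $\eta_1'(0) = 0$. Combined with Part 1 this makes $0$ a strict maximum of $\eta_1$ on $[-\pi,\pi]$, and therefore $\eta_1''(0) \le 0$. The hard step is strict negativity; the plan is to connect $\eta_1''(0)$ to the variance of $\Psi_N = \psi(\bT_N)$. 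By Rellich's theorem, the simple top eigenpair $(\eta_1(t), u_1(t))$ of $\hat\alpha(t)$ is real-analytic on a neighborhood of $0$, and the Hermitian spectral decomposition together with Part 1 (which gives a uniform spectral gap for $|t|$ away from $0$) produces, for some $\tilde c \in (0, 1)$ and some real-analytic $h$ with $h(0) = 1$,
\[ \varphi_{\Psi_N}(t) = \frac{P_N(e^{it})}{P_N(1)} = \eta_1(t)^N h(t) + O(\tilde c^N). \]
Differentiating twice at $t = 0$ and using $\eta_1(0) = 1$, $\eta_1'(0) = 0$ yields $\EE[\Psi_N^2] = -N \eta_1''(0) + O(1)$. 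Strict negativity of $\eta_1''(0)$ then reduces to a linear lower bound $\EE[\Psi_N^2] \ge cN$. To get this, I would exploit surjectivity of $\psi$: there exist $n_0$ and $\bfs_0, \bfs_1 \in \cT(\cR_{n_0;\emptyplug,\emptyplug})$ with $\psi(\bfs_1) - \psi(\bfs_0) = 1$ (use any loop representing $1$ and pad with vertical tilings). Combined with Lemma \ref{lemma:manyverticalfloors}, which provides $\Theta(N)$ disjoint positions along a typical tiling where a vertical block of length $n_0$ with $\emptyplug$-plugs at both ends can be replaced by either $\bfs_0$ or $\bfs_1$, one obtains a fixed-point-free involution on $\cT(\cR_N)$ at each such position. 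Conditioning on the plug pattern at those positions then exhibits $\Theta(N)$ independent two-valued contributions to $\Psi_N$ and produces the required variance lower bound.

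For Part 3, Rellich's theorem gives real analyticity of $\eta_1$ on a neighborhood of $0$; combined with Part 2 this yields $\log \eta_1(t) = -a_2 t^2 + O(t^4)$ with $a_2 = -\eta_1''(0)/2 > 0$. Sandwiching the $O(t^4)$ remainder between $a_4^{-} t^4$ and $a_4^{+} t^4$ on a small enough interval $(-\epsilon,\epsilon)$ and exponentiating gives the double inequality for $\eta_1$. For $k \ne 1$, $\eta_k(0) \in (-1,1)$ by simplicity of the Perron eigenvalue $1$ of $\hat\alpha(0)$; continuity of the (possibly relabelled) eigenvalue branches then furnishes a constant $c \in (0,1)$ bounding $|\eta_k(t)|$ uniformly for $k \ne 1$ and $t$ close to $0$, and, after shrinking $\epsilon$ if necessary, the same $c$ gives the claimed lower bound on $\eta_1(t)$. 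The main obstacle is the strict inequality $\eta_1''(0) < 0$: the splicing argument must be set up as a genuine involutive bijection on $\cT(\cR_N)$ so that the swap bits become conditionally uniform, and the $O(1)$ error term in the second-moment expansion must be controlled tightly enough not to absorb the $\Theta(N)$ main term.
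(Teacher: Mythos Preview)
Your argument is correct. Parts 1 and 3 are essentially the paper's proof with cosmetic differences: where the paper shows $(\hat\alpha(t))^N$ maps the box $Y=\{|y_p|\le (v_1)_p\}$ strictly into its interior, you instead test directly against an eigenvector and use $\|A/\lambda_1\|_{\mathrm{op}}=1$; both rest on the same strict entrywise bound from Lemma~\ref{lemma:supercork}.

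Part 2 is where you genuinely diverge. The paper stays inside linear algebra: from the ``two consecutive positive coefficients'' clause of Lemma~\ref{lemma:supercork} it extracts, for small $t$, the quantitative entrywise bound $|(\hat\alpha(t)^N)_{p,\tilde p}| \le (1-\tilde c t^2)(A^N/\lambda_1^N)_{p,\tilde p}$, which via the same invariant-set argument yields $|\eta_1(t)|^N \le 1-\tilde c t^2$ and hence $\eta_1''(0)<0$ in one stroke. Your route is probabilistic: you identify $-\eta_1''(0)$ with the asymptotic slope of $\EE[\Psi_N^2]$ and then lower-bound the variance by a block-swapping construction. That construction is exactly the equivalence relation $\equiv$ the paper introduces later in Section~\ref{section:truenormal}, and the extension of Lemma~\ref{lemma:manyverticalfloors} you need (many disjoint blocks with $\emptyplug$-plugs at both ends) is precisely what the paper obtains there by ``imitating'' that lemma. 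So your proof is sound, but it is longer and imports machinery that the paper only develops afterwards; the paper's argument is shorter and self-contained at this point. On the other hand, your approach makes the probabilistic meaning of $\eta_1''(0)$ transparent (it \emph{is} the limiting variance per floor), which is conceptually pleasant. Two small points worth tightening in a write-up: the $O(\tilde c^N)$ remainder must be differentiated twice, which you should justify by bounding $\|(d^j/dt^j) C(t)^N\|$ at $t=0$ (it is $O(N^j c^N)$ via the product rule, since $C(0)$ is Hermitian with norm $<1$); and the ``uniform spectral gap'' you invoke near $t=0$ comes from continuity of the $\eta_k$ and simplicity of $\eta_1(0)$, not from Part~1.
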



\begin{proof}
Let $v_1 \in (0,1)^{\cP} \subset \CC^{\cP}$
be the eigenvector of $\alpha(1) = A$ corresponding to $\lambda_1$,
as in Lemma \ref{lemma:pf}.
Consider the convex compact set $Y \subset \CC^{\cP}$
of all vectors $y \in \CC^{\cP}$
such that $|y_p| \le (v_1)_p$ (for all $p \in \cP$).
We claim that
$(\hat\alpha(t))^N[Y] \subset \interior(Y)$
if $t \in [-\pi,\pi] \smallsetminus \{0\}$ and
$N$ is sufficiently large;
notice that this claim implies the first item.

From the first claim of Lemma \ref{lemma:supercork},
if $y \in Y$ and $z = \exp(\frac{it}{m})$ then
\begin{align*}
|((\hat\alpha(t))^N y)_{p}|
&\le \frac{1}{\lambda_1^N} \sum_{\tilde p \in \cP}
|((\alpha(z))^N)_{(p,\tilde p)}| |y_{\tilde p}| \\
&\le \frac{1}{\lambda_1^N} \sum_{\tilde p \in \cP}
(A^N)_{(p,\tilde p)} (v_1)_{\tilde p}
= \frac{1}{\lambda_1^N} (A^N v_1)_p
= (v_1)_{p}
\end{align*}
and therefore $Y$ is invariant under $(\hat\alpha(t))^N$.
From the second claim of Lemma \ref{lemma:supercork},
if $N \ge N_0$ then some inequality in the display above is strict,
completing the proof of the first item.

We already saw that $\eta_1$ is even, real analytic
in a neighborhood of $t = 0$,
and that $\eta_1(0) = 1$ is a local maximum point.
In order to complete the proof of the second item
we are left with proving that $\eta_1''(0) < 0$.
We again consider the invariant set $Y$.
Take $N \ge N_0$ so that,
from Lemma \ref{lemma:supercork},
for all $p, \tilde p \in \cP$
there exists $k \in \ZZ$ such that
the coefficients of $\tilde q^k$ and $\tilde q^{(k+m)}$
in $(\alpha^N)_{p,\tilde p}$ are both positive.
There exists therefore $\tilde c > 0$ and $\tilde\epsilon > 0$ such that,
for all $t \in (-\tilde\epsilon,\tilde\epsilon)$ and
for all $p, \tilde p \in \cP$, we have
$|((\hat\alpha(t))^N)_{p,\tilde p}| \le
(1-\tilde c t^2) ((\hat\alpha(1))^N)_{p,\tilde p} $.
We thus have $(\hat\alpha(t))^N[Y] \subseteq (1-\tilde c t^2) Y$
and therefore, for all $k$,
$|\eta_k(t)|^N \le (1-\tilde c t^2)$;
taking $k = 1$ we obtain the second item.
The third item is then easy.
\end{proof}

\begin{proof}[Proof of Theorem \ref{theo:normal}]
Let $\eta_1(t)$ be the (simple and isolated)
largest eigenvalue of $\hat\alpha(t)$;
we already saw that the function
$\eta_1: (-\epsilon,\epsilon) \to (c,1] \subset \RR$ is real analytic.
For $t \in  (-\epsilon,\epsilon)$,
let $v_1(t)$ be the corresponding unit eigenvector;
the function $v_1: (-\epsilon,\epsilon) \to \CC^{\cP\times\cP}$
is also real analytic.
Notice that $v_1(0)$ is the eigenvector of $A = \alpha(1)$
previously denoted by $v_1$.
Let $\Pi_1(t) = v_1(t)(v_1(t))^\ast$ be the orthogonal projection
onto the subspace spanned by $v_1(t)$.
Write $\hat\alpha(t) = \eta_1(t) \Pi_1(t) + C_1(t)$
so that $\Pi_1(t) C_1(t) = C_1(t) \Pi_1(t) = 0$
and $C_1(t)$ is a $c$-contraction.
Thus
\[ (\hat\alpha(t))^N = (\eta_1(t))^N \Pi_1(t) + (C_1(t))^N
=  (\eta_1(t))^N \Pi_1(t) + o(1), \]
where the error term $o(1)$ goes to $0$ exponentially
(when $N$ goes to $+\infty$).
Let 
\[ p_1(t) =
\frac{(\Pi_1(t))_{\emptyplug,\emptyplug}}{(\Pi_1(0))_{\emptyplug,\emptyplug}}
= \left| \frac{(v_1(t))_{\emptyplug}}{(v_1(0))_{\emptyplug}} \right|^2; \]
the real function $p_1$ is even and real analytic with $p_1(0) = 1$
(notice that the denominator
is known to be positive from Lemma \ref{lemma:pf}).

Recall (again from Lemma \ref{lemma:pf}) that
\[ P_N(1) = |\cT(\cR_N)| =
\lambda_1^N ((v_1)_{\emptyplug}) (1+o(1)). \]
Thus, when $N$ goes to infinity,
\[ \varphi_{\Psi_N}(t) =
\frac{P_N(e^{it})}{P_N(1)} = 
\frac{((\hat\alpha(t))^N)_{\emptyplug,\emptyplug}}%
{((\hat\alpha(0))^N)_{\emptyplug,\emptyplug}} 
= (1 + o(1)) (\eta_1(t))^N p_1(t); \] 
here again the error term $o(1)$ goes to $0$ exponentially in $N$
and uniformly in $t \in (-\epsilon,\epsilon)$.
We now estimate the characteristic function
\( \varphi_{{\Psi_N}/{\sqrt{N}}}(t) =
\varphi_{\Psi_N}({t}/{\sqrt{N}}) \).
There are constants $b_2, b_4^{+}, b_4^{-}$ such that
\[ \exp(b_2 t^2 + b_4^{-} t^4) \le p_1(t) \le \exp(b_2 t^2 + b_4^{+} t^4). \]
Together with Equation \eqref{eq:tayloreta} (in Lemma \ref{lemma:contraction})
we thus have
\[ (\eta_1(t/\sqrt{N}))^N p_1(t/\sqrt{N}) \approx \exp(-a_2t^2), \]
or, more precisely:
\begin{align*}
(\eta_1(t/\sqrt{N}))^N p_1(t/\sqrt{N}) &\le
\exp(-a_2 t^2) \;
\exp\left(\frac{b_2}{N} t^2 +
\frac{a_4^{+}}{N} t^4 + \frac{b_4^{+}}{N^2} t^4\right); \\
(\eta_1(t/\sqrt{N}))^N p_1(t/\sqrt{N}) &\ge
\exp(-a_2 t^2) \;
\exp\left(\frac{b_2}{N} t^2 +
\frac{a_4^{-}}{N} t^4 + \frac{b_4^{-}}{N^2} t^4\right).
\end{align*}
It follows that the sequence of characteristic functions
$(\varphi_{{\Psi_N}/{\sqrt{N}}})_{N \in \NN}$
converges uniformly in compacts to the gaussian $\exp(-a_2 t^2)$,
completing the proof.
\end{proof}

Given a nontrivial quadriculated disk $\cD$,
a surjective homomorphism $\psi: G_{\cD} \to \ZZ$ and $K = \ker(\psi)$,
construct $\xi: \cP^K \to \RR$ as in Section \ref{section:cocycle}.
For $p_0, p_N \in \cP$ and a tiling $\bt \in \cT(\cR_{0,N;p_0,p_N})$,
interpret $\bt$ as a path in $\cC_{\cD}$ and lift it to $\cC_{\cD}^K$,
thus defining a finite sequence $(\tilde p_k)_{0 \le k \le N}$
of vertices of $\cP^K$. Define
\[ \psi_{\xi}(\bt) = \xi(\tilde p_N) - \xi(\tilde p_0). \]
Notice that if $p_0 = p_N = \emptyplug$ then
$\psi_{\xi}(\bt) = \psi(\bt)$;
in other words, we are extending the definition of $\psi$
to tilings of corks.

\begin{coro}
\label{coro:xnormal}
Consider a nontrivial quadriculated disk $\cD$ and
a surjective homomorphism $\psi: G_{\cD} \to \ZZ$;
construct $\xi$ as above.
Given two fixed plugs $p,\tilde p \in \cP$,
let $\bT$ be a random tiling of $\cR_{0,N;p,\tilde p}$.
As $N \to \infty$, the real random variable
$(1/\sqrt{N})\psi_{\xi}(\bT)$
converges in distribution to a normal distribution centered at $0$.
Furthermore, the limit distribution does not depend on the choices
of $\xi$, $p$ or $\tilde p$.
\end{coro}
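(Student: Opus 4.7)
The plan is to mimic, essentially verbatim, the characteristic function argument used in the proof of Theorem~\ref{theo:normal}, with the empty plug replaced by the general endpoint plugs $p,\tilde p$. The first step is to identify the correct generating function. Since the cocycle $d\xi$ is well defined on edges of $\cC_{\cD}$ (not only on $\cC_{\cD}^K$), for any tiling $\bt=(p_0,f_1,\ldots,f_N,p_N)\in\cT(\cR_{0,N;p,\tilde p})$ interpreted as a path in $\cC_\cD$ we have $\sum_{k=1}^{N}d\xi(f_k)=\xi(\tilde p_N)-\xi(\tilde p_0)=\psi_\xi(\bt)$, where $\tilde p_0,\tilde p_N$ are the endpoints of the lift in $\cC_{\cD}^K$ that starts at the chosen lift of $p$. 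Consequently the $\psi$-adjacency matrix satisfies
\[ ((\alpha(e^{it/m}))^N)_{p,\tilde p}
= \sum_{\bt\in\cT(\cR_{0,N;p,\tilde p})} e^{it\psi_\xi(\bt)}
= |\cT(\cR_{0,N;p,\tilde p})|\,\EE\bigl[e^{it\psi_\xi(\bT)}\bigr], \]
so that the characteristic function of $\psi_\xi(\bT)$ is the ratio
\[ \varphi_{\psi_\xi(\bT)}(t)=\frac{((\hat\alpha(t))^N)_{p,\tilde p}}{((\hat\alpha(0))^N)_{p,\tilde p}}. \]

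Next I would feed this ratio into the spectral machinery already built for Theorem~\ref{theo:normal}. The decomposition $\hat\alpha(t)=\eta_1(t)\Pi_1(t)+C_1(t)$, valid in a neighborhood of $t=0$ with $t\mapsto\Pi_1(t)$ real analytic and $C_1(t)$ a uniform $c$-contraction on the complementary subspace, gives
\[ ((\hat\alpha(t/\sqrt N))^N)_{p,\tilde p}
= (\eta_1(t/\sqrt N))^N\,(\Pi_1(t/\sqrt N))_{p,\tilde p}+o(1), \]
uniformly on compacts in $t$. Since Lemma~\ref{lemma:pf} yields $(\Pi_1(0))_{p,\tilde p}=(v_1)_p(v_1)_{\tilde p}>0$, the analytic prefactor $(\Pi_1(t/\sqrt N))_{p,\tilde p}/(\Pi_1(0))_{p,\tilde p}$ tends to $1$, while Lemma~\ref{lemma:contraction} sandwiches $(\eta_1(t/\sqrt N))^N$ between $\exp(-a_2 t^2+O(t^4/N))$ from above and below. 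The ratio therefore converges uniformly on compacts to $\exp(-a_2 t^2)$, and L\'evy's continuity theorem produces the desired centered normal limit, with the same variance $1/(2a_2)$ appearing in Theorem~\ref{theo:normal}.

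Finally I would check that the limiting gaussian is independent of the three choices. Independence from $p$ and $\tilde p$ is automatic: these plugs appear only in the prefactor $(\Pi_1(t/\sqrt N))_{p,\tilde p}/(\Pi_1(0))_{p,\tilde p}\to 1$, while the exponential rate $a_2=-\eta_1''(0)/2$ depends only on the spectrum of $\hat\alpha$. Independence from $\xi$ follows from the observation, recorded in Section~\ref{section:cocycle}, that any two admissible functions $\xi,\xi'$ differ by a well-defined function on $\cP$, so $\psi_{\xi'}(\bt)-\psi_\xi(\bt)=(\xi'-\xi)(\tilde p)-(\xi'-\xi)(p)$ is a constant bounded independently of $N$ and $\bt$, which vanishes after dividing by $\sqrt N$. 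I do not anticipate a genuine obstacle here; the only point requiring any real care is the bookkeeping that promotes the generating function identity \eqref{equation:P}, stated there for loops based at $\emptyplug$, to arbitrary endpoints $p,\tilde p$ while keeping track of the exponent $\psi_\xi$. Once this identification is in hand, the spectral argument of Theorem~\ref{theo:normal} transfers without modification.
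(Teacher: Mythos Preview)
Your proposal is correct and is exactly the second of the two approaches the paper indicates: the paper's proof is the two-line remark that the corollary ``follows from Theorem~\ref{theo:normal} together with Lemma~\ref{lemma:pf}'' or, ``alternatively, from the proof of Theorem~\ref{theo:normal},'' and you have carefully spelled out the latter by replacing the $(\emptyplug,\emptyplug)$ entry with the $(p,\tilde p)$ entry throughout the characteristic-function computation. Your bookkeeping for the generating-function identity at general endpoints and your justification of independence from $\xi$, $p$, $\tilde p$ are accurate and match what the paper leaves implicit.
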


\begin{proof}
The result follows from Theorem \ref{theo:normal}
together with Lemma \ref{lemma:pf}.
Alternatively, it follows from the proof of Theorem \ref{theo:normal}.
\end{proof}


\section{Proof of Theorem \ref{theo:truenormal}}
\label{section:truenormal}

Our aim is to prove Theorem \ref{theo:truenormal}
(using Theorem \ref{theo:normal}):
it follows directly from Lemmas \ref{lemma:normallimsup}
and \ref{lemma:normalliminf} below.
The quadriculated disk $\cD$ and
the surjective homomorphism $\psi: G_{\cD} \to \ZZ$ will be fixed.
The constants $C_0, C_1 \in (0,+\infty)$ are as in Theorem \ref{theo:normal}.
We shall first introduce some (local) notation.

It follows from the surjectivity of $\psi$ that
there exist $N_{\bullet} \in \NN^\ast$
and tilings $\bt_{\bullet,0}, \bt_{\bullet,1}$ of $\cR_{N_{\bullet}}$
with $\psi(\bt_{\bullet,1}) = 1 + \psi(\bt_{\bullet,0})$.
For the rest of the section,
$N_{\bullet}$ and $\bt_{\bullet,i}$ will be fixed.

For a tiling $\bt$ of $\cR_N$,
let $B(\bt)$ be the set of $k \in \ZZ$, $0 \le k \le (N/N_{\bullet}) - 1$,
with the following properties:
\begin{itemize}
\item{the plugs $p_{kN_{\bullet}}$ and $p_{(k+1)N_{\bullet}}$ of $\bt$
both equal $\emptyplug$;}
\item{the restriction of $\bt$ to $\cR_{kN_{\bullet},(k+1)N_{\bullet}}$
is a translated copy of either $\bt_{\bullet,0}$ or $\bt_{\bullet,1}$.}
\end{itemize}
Let $\beta(\bt) = |B(\bt)|$.
Imitate Lemma \ref{lemma:manyverticalfloors} to obtain
constants $C_{\bullet}, c_{\bullet} \in (0,1)$ such that, for large $N$,
we have $\Prob[\beta(\bT_N) \le C_{\bullet}N] = o(c_{\bullet}^N)$
(where $\bT_N$ is a random tiling of $\cR_N$).
Define also an equivalence relation $\equiv$ in $\cT(\cR_N)$
(not to be confused with $\approx$ or $\sim$).
We have $\bt_0 \equiv \bt_1$ if and only if:
\begin{itemize}
\item{$B(\bt_0) = B(\bt_1)$;}
\item{if $k \notin B(\bt_0)$ then the restrictions of $\bt_0$ and $\bt_1$
to $\cR_{kN_1,(k+1)N_1}$ are equal.}
\end{itemize}
In other words, $\bt_0 \equiv \bt_1$ if and only if
$\bt_1$ is obtained from $\bt_0$
by substituting $\bt_{\bullet,i}$ for $\bt_{\bullet,1-i}$
in some of the blocks $\cR_{kN_\bullet,(k+1)N_\bullet}$,
$k \in B(\bt_0)$.
With this equivalence relation,
$\cT(\cR_N)$ is partitioned into equivalence classes.
The $\equiv$-equivalence class $[\bt]$ of $\bt$ has size $2^{\beta(\bt)}$.
Let $e(\bt) = \EE(\psi(\bT)\,|\,\bT \equiv \bt)$,
the average value of $\psi$ in the equivalence class $[\bt]$
of the tiling $\bt$.

Let $\bt$ be a tiling, $\psi(\bt) = \tau$.
Let $k_{i}$ be the number of $\bt_{\bullet,i}$ blocks in $\bt$,
so that $k_{0} + k_{1} = \beta(\bt)$.
In the equivalence class $[\bt]$,
the random variable $\psi(\bT)$ follows a binomial distribution:
\begin{equation}
\label{equation:binomial}
\Prob[\psi(\bT) = \tau+j \,|\, \bT \equiv \bt] =
\begin{cases}
2^{-\beta(\bt)} \binom{\beta(\bt)}{j + k_{1}}, & -k_{1} \le j \le k_{0}, \\
0, & \textrm{otherwise.}
\end{cases}
\end{equation}

\begin{lemma}
\label{lemma:normallimsup}
Let $\cD \subset \RR^2$ be a quadriculated disk;
let $\phi: G_{\cD} \to \ZZ$ be a surjective homomorphism;
let $C_0, C_1 \in (0,+\infty)$ be as in Theorem \ref{theo:normal}.
Let $(t_N)$ be a sequence of integers with
$\lim_{N \to \infty} t_N/\sqrt{N} = \tau_0 \in \RR$.
We then have
\[ \limsup_{N \to \infty} \sqrt{N}\,\Prob[\phi(\bT_N) = t_N] \le
C_0 \exp(-C_1 \tau_0^2). \]
\end{lemma}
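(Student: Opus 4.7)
The plan is to use the binomial conditional distribution \eqref{equation:binomial} to smooth the point probability $\Prob[\phi(\bT_N)=t_N]$ into an interval probability of length $\sim\sqrt N$, on which Theorem \ref{theo:normal} applies directly. Fix $\delta>0$ small and $M>0$ large (to be sent to $0$ and $\infty$ respectively, after $N\to\infty$). Let $L_N=\lfloor\delta\sqrt N\rfloor$ and $J_N=\{t_N-L_N,\ldots,t_N+L_N\}$. Applying Theorem \ref{theo:normal} to $t_N^{\pm}=t_N\pm(L_N+1)$ yields
\[
\lim_{N\to\infty}\Prob[\phi(\bT_N)\in J_N]
=\int_{\tau_0-\delta}^{\tau_0+\delta}C_0\exp(-C_1\tau^2)\,d\tau,
\]
a quantity equal to $2\delta\,C_0\exp(-C_1\tau_0^2)\,(1+O(\delta))$ as $\delta\to 0$.

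Next, partition the $\equiv$-equivalence classes into \emph{bulk} classes, characterized by $\beta(\bt)\ge C_\bullet N$ and $|e(\bt)-t_N|\le M\sqrt N$, and \emph{tail} classes (everything else). Classes with $\beta(\bt)<C_\bullet N$ are exponentially rare by the bound $\Prob[\beta(\bT_N)\le C_\bullet N]=o(c_\bullet^N)$ already established before the statement, and contribute $o(1)$ to $\sqrt N\,\Prob[\phi(\bT_N)=t_N]$. For classes with $\beta(\bt)\ge C_\bullet N$ but $|e(\bt)-t_N|>M\sqrt N$, a Stirling estimate on the binomial in \eqref{equation:binomial}, combined with $\beta(\bt)\le N/N_\bullet$, gives
\[
\Prob[\phi(\bT_N)=t_N\mid \bT_N\equiv\bt]\le \frac{C}{\sqrt N}\exp(-c\,M^2)
\]
for constants $C,c>0$ independent of $N$ and $M$, so the total contribution of such classes to $\sqrt N\,\Prob[\phi(\bT_N)=t_N]$ is at most $C\exp(-cM^2)$.

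For bulk classes the key step is a uniform Stirling comparison. Since $|t_N-e(\bt)|\le M\sqrt N\le (M/\sqrt{C_\bullet})\sqrt{\beta(\bt)}$ and $|j|\le L_N\le(\delta/\sqrt{C_\bullet})\sqrt{\beta(\bt)}$, the Gaussian approximation of the binomial coefficient at $t_N+j$ versus $t_N$ differs multiplicatively by $\exp(O_M(\delta))=1+O_M(\delta)$, with a negligible Stirling error $o_N(1)$ as $N\to\infty$. This gives, uniformly over bulk $[\bt]$ and $|j|\le L_N$,
\[
\Prob[\phi(\bT_N)=t_N\mid\bT_N\equiv\bt]
\le \bigl(1+O_M(\delta)+o_N(1)\bigr)\,\Prob[\phi(\bT_N)=t_N+j\mid\bT_N\equiv\bt].
\]
Averaging over $j$ and summing over bulk classes,
\[
\sqrt N\,\Prob[\phi(\bT_N)=t_N,\,\bT_N\text{ bulk}]
\le \frac{1+O_M(\delta)+o_N(1)}{2\delta}\,\Prob[\phi(\bT_N)\in J_N].
\]
Combining the bulk and tail contributions, taking $\limsup_N$, invoking the first display, then sending $\delta\to 0$ for each fixed $M$, and finally $M\to\infty$, yields the desired bound.

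The main obstacle is the uniform Stirling comparison for bulk classes: one must simultaneously control the drift $|t_N-e(\bt)|$ (scale $M\sqrt N$) and the smoothing width $L_N$ (scale $\delta\sqrt N$) so that their product $|t_N-e(\bt)|\cdot L_N/\beta(\bt)$ stays of order $M\delta/C_\bullet$. This trade-off is precisely what forces the ordered double limit $\delta\to 0$ first, then $M\to\infty$; reversing it would lose control of the multiplicative constant.
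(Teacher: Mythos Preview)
Your argument is correct and complete. It is, however, organized differently from the paper's proof, and the comparison is worth recording.

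The paper isolates a single combinatorial fact about binomials (Lemma~\ref{lemma:binomiallimsup}): for every $\epsilon>0$ there is $\delta_\epsilon$ such that, for all $a\in\ZZ$ and large $n$,
\[
\binom{n}{a}\le(1+\epsilon)\,\frac{1}{2\delta\sqrt{n}}\sum_{|b-a|<\delta\sqrt n}\binom{n}{b}.
\]
The point is uniformity in $a$: the case $|a-n/2|\ge 3\sqrt n$ is handled by a discrete convexity argument (the second difference of $\binom{n}{\cdot}$ is positive there, so the central term is below the interval average), and only the central window $|a-n/2|\le 3\sqrt n$ needs Stirling. Once this lemma is in hand, it is applied directly to every $\equiv$-class with $\beta(\bt)>C_\bullet N$, and a single limit $\delta\to 0$ finishes.

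Your route replaces this uniform lemma by the bulk/tail split with the auxiliary scale $M$. The tail is dispatched by the sub-Gaussian pointwise bound $2^{-\beta}\binom{\beta}{\beta/2+d}\le C\beta^{-1/2}\exp(-2d^2/\beta)$ together with $\beta\le N/N_\bullet$, and the bulk by a Stirling ratio estimate valid because $|t_N-e(\bt)|$ and $L_N$ are both $O_M(\sqrt\beta)$. This forces the ordered double limit $\delta\to 0$ then $M\to\infty$, exactly as you note. Both arguments are sound; the paper's convexity trick buys uniformity in one stroke and keeps a single small parameter, while your decomposition is more explicit about where each regime contributes and relies only on standard Gaussian tail bounds rather than a bespoke lemma.
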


We first need a lemma about binomial numbers.
We follow the convention $\binom{n}{b} = 0$
if $b < 0$ or $b > n$.

\begin{lemma}
\label{lemma:binomiallimsup}
For every $\epsilon > 0$ there exist $\delta_\epsilon > 0$
with the following property.
Given $\delta \in (0,\delta_\epsilon)$ 
there exists $n_\delta \in \NN$ such that
if $n > n_\delta$, and $a \in \ZZ$, then
\[ \binom{n}{a} \le (1+\epsilon)\,\frac{1}{2\delta\sqrt{n}}\,
\sum_{a-\delta\sqrt{n} < b < a+\delta\sqrt{n}} \binom{n}{b}. \]
Furthermore, given a compact interval $K \subset (0,\delta_\epsilon)$,
the value of $n_\delta$ can be taken to be the same for all $\delta \in K$.
\end{lemma}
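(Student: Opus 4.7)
The approach is to reduce the claim to its continuous Gaussian analogue, which is elementary, and then transfer back via Stirling's formula. By symmetry $\binom{n}{b} = \binom{n}{n-b}$, I may assume $0 \le a \le n/2$; the statement is trivial if $\binom{n}{a} = 0$.

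First I would prove the Gaussian counterpart: for every $\sigma > 0$, every $x \in \RR$, and every $\delta \in (0, \sigma\sqrt{\log(1+\epsilon)}]$,
\[ e^{-x^2/\sigma^2} \le (1+\epsilon)\,\frac{1}{2\delta}\int_{x-\delta}^{x+\delta} e^{-y^2/\sigma^2}\,dy. \]
Indeed, after substituting $y = x + u$ and factoring out $e^{-x^2/\sigma^2}$, one applies the pointwise bound $e^{-u^2/\sigma^2} \ge e^{-\delta^2/\sigma^2}$ together with the identity $\frac{1}{2\delta}\int_{-\delta}^{\delta} e^{-2xu/\sigma^2}\,du = \sinh(2\delta x/\sigma^2)/(2\delta x/\sigma^2) \ge 1$ (valid for all real $x$) to conclude that the average in question is at least $e^{-\delta^2/\sigma^2} \ge 1/(1+\epsilon)$. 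I would take $\delta_\epsilon := \sqrt{(\log(1+\epsilon))/2}$, the factor $1/\sqrt{2}$ matching the variance $\sigma^2 = 1/2$ of the limiting Gaussian for the centered binomial below.

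Next, I split the binomial problem at a threshold $R_n := n^{1/5}$ (any sequence with $R_n \to \infty$ and $R_n = o(n^{1/4})$ would do). In the bulk $|a - n/2| \le R_n\sqrt{n}$, the sharp Stirling expansion yields, uniformly for $|b - n/2| \le (R_n + \delta_\epsilon)\sqrt{n}$, the asymptotic
\[ \binom{n}{b} = \binom{n}{\lfloor n/2 \rfloor}\,\exp\!\left(-\tfrac{2(b - n/2)^2}{n}\right)(1 + o(1)), \]
with the $o(1)$ uniform in $b$. Substituting into both sides of the lemma and approximating the discrete sum by a Riemann sum for the corresponding Gaussian integral (the discretization error is also $o(1)$, uniformly for $\delta$ in any compact subinterval $K$ of $(0, \delta_\epsilon)$, since there are on the order of $\delta\sqrt{n}$ summands), the bulk case reduces to the Gaussian analogue at $\sigma^2 = 1/2$, with both $o(1)$ errors absorbed into the $(1 + \epsilon)$ margin for $n$ large enough. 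In the tail $a < n/2 - R_n\sqrt{n}$, for $0 \le j \le \delta\sqrt{n}$ with $a + j$ still in the tail (which holds as long as $R_n \ge \delta$, automatic for $n$ large), the ratio $\binom{n}{a+1}/\binom{n}{a} = (n - a)/(a + 1) \ge 1 + 4R_n/\sqrt{n}$ iterates to $\binom{n}{a+j}/\binom{n}{a} \ge \exp(R_n j/\sqrt{n})$ via $\log(1+t) \ge t/2$ for small $t$. The single summand at $j_0 := \lfloor \delta\sqrt{n}/2 \rfloor$ therefore exceeds $\binom{n}{a}\,e^{\delta n^{1/5}/4}$, which dwarfs the required threshold $2\delta\sqrt{n}\binom{n}{a}/(1 + \epsilon)$ for $n$ large, so the inequality holds with vast room to spare.

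The main obstacle I anticipate is verifying the local CLT expansion uniformly on the mildly growing range $|b - n/2| \le (R_n + \delta_\epsilon)\sqrt{n}$: this is handled by combining the next-order term in Stirling's expansion with the Taylor expansion of the binary entropy function around $1/2$, the relevant error being controlled since $R_n^4/n = o(1)$. Uniformity of $n_\delta$ over $\delta \in K$ is automatic because every constant and error rate above depends on $\delta$ only through $\inf K$ and $\sup K$.
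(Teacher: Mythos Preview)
Your plan---a Gaussian comparison lemma, a bulk/tail split, Stirling in the bulk, ratio iteration in the tail---is sound, and the continuous lemma is a clean preliminary step. But the bulk argument as written has a genuine gap. With threshold $R_n = n^{1/5}$, you claim the discrete sum approximates $\frac{1}{2\delta}\int_{x-\delta}^{x+\delta}e^{-2y^2}\,dy$ with error $o(1)$ uniformly over $|x|\le R_n$. It does not, in the relative sense you need: after dividing out $e^{-2x^2}$, the summand is $e^{-4xu-2u^2}$, whose oscillation over $|u|\le\delta$ is of order $e^{4\delta|x|}$, so the relative discretization error carries a factor $e^{4\delta R_n}=e^{4\delta n^{1/5}}\to\infty$. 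Meanwhile your tail argument (a single term at $j_0$ beating $2\delta\sqrt n$) requires $e^{cR_n\delta}\gg\sqrt n$, i.e.\ $R_n$ at least of order $(\log n)/\delta$; the Riemann-sum step only survives for $R_n$ at most a smaller multiple of $(\log n)/\delta$. One can check the constants are incompatible, so no choice of $R_n$ makes both halves work as you wrote them.

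The repair is easy and already implicit in your own Gaussian lemma: skip the integral. From Stirling, $\binom{n}{a+k}/\binom{n}{a}=e^{-4xu_k-2u_k^2}(1+o(1))$ uniformly in $k$; bound $e^{-2u_k^2}\ge e^{-2\delta^2}$ and use convexity of $t\mapsto e^t$ over the symmetric range to get $\frac{1}{2M+1}\sum_{|k|\le M}e^{-4xu_k}\ge 1$. This gives the discrete average $\ge (1+o(1))e^{-2\delta^2} > 1/(1+\epsilon)$ directly, and the bulk then goes through with your $R_n=n^{1/5}$.

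For comparison, the paper sidesteps the tension more simply: it computes the discrete second difference $\binom{n}{a-1}-2\binom{n}{a}+\binom{n}{a+1}$ and shows it is positive whenever $|a-n/2|>\sqrt{n+2}$. Convexity then gives $\binom{n}{a}\le\frac{1}{|J|}\sum_{b\in J}\binom{n}{b}$ (with constant $1$, no $1+\epsilon$ needed) whenever the symmetric window $J$ around $a$ misses this short central interval, which handles all $|a-n/2|\ge 3\sqrt n$ at once. The Stirling/Gaussian step is then confined to the fixed range $|a-n/2|\le 3\sqrt n$, where all approximations are routine. Your ratio-iteration tail becomes a one-line convexity observation.
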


\begin{proof}
The inequality is trivial if $a < 0$ or $a > n$.
Assuming $0 \le a \le n$, notice first that
\[ \Delta = \binom{n}{a-1} - 2 \binom{n}{a} + \binom{n}{a+1} =
\binom{n}{a}\,\frac{(n-2a)^2 - n - 2}{(a+1)(n-a+1)} \]
and therefore $\Delta > 0$ for $(n-2a)^2 > n+2$.
The letter $\Delta$ is used to remind us of the Laplacian:
$\Delta$ is a discrete second derivative.
The concavity of the graph of $\binom{n}{x}$
(as a function of $x$) thus points down
in the central interval $|a - \frac{n}{2}| \le \sqrt{n+2}$
and points up elsewhere.
Thus, if $J \subset \ZZ$ is an interval
disjoint from the central interval
and $a \in J$ is the central point then
\[ \binom{n}{a} \le \frac{1}{|J|}\,\sum_{b \in J} \binom{n}{b}. \]
Assuming $\delta_\epsilon < 1$ and $n$ large, this takes care of the case
$|a - \frac{n}{2}| \ge 3\sqrt{n}$.

For the case $|a - \frac{n}{2}| \le 3\sqrt{n}$
we also assume $\delta_\epsilon < 1$
so that $|b - \frac{n}{2}| \le 4\sqrt{n}$ for all $b$ in the summation.
Stirling approximation formula yields
\[ \frac{1}{2^n} \binom{n}{b} =
(1+o(n))\,\sqrt{\frac{2}{\pi n}}\,
\exp\left(-\frac{2(b-\frac{n}{2})^2}{n}\right); \]
assuming the conditions above on $a$ and $b$,
the approximation holds uniformly.
Both this equation or the Central Limit Theorem yield
\[ \frac{1}{2^n}\,
\sum_{a-\delta\sqrt{n} < b < a+\delta\sqrt{n}} \binom{n}{b} =
(1 + o(n)) \, \sqrt{\frac{2}{\pi}} \,
\int_{t_0-\delta}^{t_0+\delta} \exp(-2t^2) dt,
\qquad t_0 = \frac{a-\frac{n}{2}}{\sqrt{n}}; \]
assuming $\delta \in K$,
$K \subset (0,1)$ a fixed compact interval,
this also holds uniformly.
Set 
\[ A = \binom{n}{a}, \qquad B = \frac{1}{2\delta\sqrt{n}}\,
\sum_{a-\delta\sqrt{n} < b < a+\delta\sqrt{n}} \binom{n}{b}. \]
We have
\[ \frac{A}{B} = (1+o(n))
\frac{\exp(-2t_0^2)}{\frac{1}{2\delta}
\int_{t_0-\delta}^{t_0+\delta} \exp(-2t^2) dt}. \]
Given $\epsilon > 0$, there exists $\delta_\epsilon > 0$ such that
the fraction on the right hand side lies in the interval
$(1-\frac{\epsilon}{2}, 1+\frac{\epsilon}{2})$
for all $t_0 \in [-4,4]$ and all $\delta \in (0,\delta_\epsilon)$,
completing the proof of the lemma.
\end{proof}

\begin{proof}[Proof of Lemma \ref{lemma:normallimsup}]
Let $(t_N)$ be a sequence of integers, as in the statement,
with $\lim_{N\to\infty} t_N/\sqrt{N} = \tau_0 \in \RR$.
We prove that for any $\epsilon \in (0,\frac{1}{10})$ we have
\[ \limsup_{N \to \infty} \sqrt{N}\,\Prob[\phi(\bT_N) = t_N] \le
(1+10\epsilon)\,C_0 \exp(-C_1 \tau_0^2). \]
This will prove our lemma.

Given $\delta > 0$,
let $\tau^{\pm} = \tau_0 \pm \delta$.
With $\tau_0$ fixed, for sufficiently small $\delta > 0$:
\begin{equation}
\label{equation:limsup1}
\frac{1}{2\delta} 
\int_{\tau^{-}}^{\tau^{+}} C_0 \exp(-C_1\tau^2) d\tau <
(1+{\epsilon})\,C_0 \exp(-C_1 \tau_0^2). 
\end{equation}
Apply Theorem \ref{theo:normal} to deduce that,
for sufficiently large $N$,
\begin{equation}
\label{equation:limsup2}
\Prob[ \tau^{-} \sqrt{N} <  \psi(\bT_N) < \tau^{+} \sqrt{N} ] <
(1+ \epsilon)
\int_{\tau^{-}}^{\tau^{+}} C_0 \exp(-C_1\tau^2) d\tau. 
\end{equation}
Let $\bt_0$ be a tiling of $\cR_N$
with $\beta(\bt_0) > C_{\bullet} N$.
Recall from Equation \eqref{equation:binomial}
that $\Prob[\psi(\bT_N) = t\,|\,\bT_N\equiv \bt_0]$
follows a binomial distribution.
We may therefore apply Lemma \ref{lemma:binomiallimsup}
to deduce that
\begin{gather*}
\Prob[\psi(\bT_N) = t_N\,|\, \bT_N \equiv \bt_0] \le \qquad\qquad\qquad\\
\qquad\qquad\qquad \le \frac{1+2\epsilon}{2\delta\sqrt{N}}\,
\Prob[ \tau^{-} \sqrt{N} <  \psi(\bT_N) < \tau^{+} \sqrt{N} \,|\,
\bT_N \equiv \bt_0].
\end{gather*}
Since this holds for any such $\bt_0$,
and since the total measure of the equivalence classes
with $\beta \le C_{\bullet}N$ is very small, we have
\begin{equation}
\label{equation:limsup3}
\Prob[\psi(\bT_N) = t_N] \le 
\frac{1+3\epsilon}{2\delta\sqrt{N}}\,
\Prob[ \tau^{-} \sqrt{N} <  \psi(\bT_N) < \tau^{+} \sqrt{N} ].
\end{equation}
Put together Equations \eqref{equation:limsup1},
\eqref{equation:limsup2} and \eqref{equation:limsup3}
to obtain the desired conclusion.
\end{proof}

\begin{lemma}
\label{lemma:normalliminf}
Let $\cD \subset \RR^2$ be a quadriculated disk;
let $\phi: G_{\cD} \to \ZZ$ be a surjective homomorphism;
let $C_0, C_1 \in (0,+\infty)$ be as in Theorem \ref{theo:normal}.
Let $(t_N)$ be a sequence of integers with
$\lim_{N \to \infty} t_N/\sqrt{N} = \tau_0 \in \RR$.
We then have
\[ \liminf_{N \to \infty} \sqrt{N}\,\Prob[\phi(\bT_N) = t_N] \ge
C_0 \exp(-C_1 \tau_0^2). \]
\end{lemma}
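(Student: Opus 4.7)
The plan is to mirror the proof of Lemma \ref{lemma:normallimsup}, reversing the inequalities throughout. The new ingredient I will need is a \emph{lower-bound} companion to Lemma \ref{lemma:binomiallimsup}: for every $\epsilon > 0$ and every $R > 0$ there should exist $\delta^\ast > 0$ such that, for $\delta \in (0,\delta^\ast)$, all sufficiently large $n$, and all integers $a$ with $|a - n/2| \le R\sqrt{n}$,
\[
\binom{n}{a} \ge \frac{1-\epsilon}{2\delta\sqrt{n}} \sum_{a - \delta\sqrt{n} < b < a + \delta\sqrt{n}} \binom{n}{b}.
\]
This would follow from the same Stirling estimate that underlies Lemma \ref{lemma:binomiallimsup}: within the central region, the ratio $\binom{n}{b}/\binom{n}{a}$ lies between $\exp(-4R\delta - 2\delta^2)$ and $\exp(4R\delta + 2\delta^2)$ up to a $1+o(1)$ factor, so the window sum is at most $(1+\epsilon)\cdot 2\delta\sqrt{n}\,\binom{n}{a}$ once $\delta$ is chosen small in terms of $\epsilon$ and $R$. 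The central-region restriction is essential: for $a$ deep in the tail one only has the averaging inequality of Lemma \ref{lemma:binomiallimsup}, which points the wrong way.

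Given $\epsilon > 0$, I would first pick $R$ large and then $\delta$ small, and set $\tau^\pm = \tau_0 \pm \delta$. Theorem \ref{theo:normal} guarantees that, for all large $N$,
\[
\Prob\bigl[t_N - \delta\sqrt{N} < \phi(\bT_N) < t_N + \delta\sqrt{N}\bigr] \ge (1 - 2\epsilon)\cdot 2\delta\,C_0\,e^{-C_1 \tau_0^2}.
\]
Next I would partition the $\equiv$-classes of $\cT(\cR_N)$ into three families: \emph{thin} classes with $\beta(\bt_0) < C_\bullet N$, \emph{central} classes with $\beta(\bt_0) \ge C_\bullet N$ and $|t_N - e(\bt_0)| \le R\sqrt{\beta(\bt_0)}$, and \emph{tail} classes (all others). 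The thin family has total probability $o(c_\bullet^N)$, by the same argument as in the proof of Lemma \ref{lemma:normallimsup}. For any tail class, Hoeffding's inequality applied to the shifted binomial distribution \eqref{equation:binomial} forces
\[
\Prob[\,|\phi(\bT_N) - t_N| < \delta\sqrt{N}\,|\,\bT_N \equiv \bt_0\,] \le 2 \exp(-c R^2)
\]
for some constant $c > 0$, provided $\delta$ is small with respect to $R\sqrt{C_\bullet}$. Taking $R$ large then makes the tail contribution to the window-probability smaller than $\epsilon \cdot 2\delta\,C_0\,e^{-C_1 \tau_0^2}$, so the central classes must account for at least $(1-3\epsilon)\cdot 2\delta\,C_0\,e^{-C_1 \tau_0^2}$ of it.

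Finally, I would apply the companion binomial lemma on each central class with $a = t_N - c(\bt_0)$, $n = \beta(\bt_0) \ge C_\bullet N$, and rescaled parameter $\tilde\delta = \delta\sqrt{N/n} \in (0,\delta/\sqrt{C_\bullet}\,]$, so that $\delta\sqrt{N} = \tilde\delta\sqrt{n}$. The lemma yields
\[
\Prob[\phi(\bT_N) = t_N \mid \bT_N \equiv \bt_0] \ge \frac{1-\epsilon}{2\delta\sqrt{N}}\,\Prob\bigl[\,|\phi(\bT_N) - t_N| < \delta\sqrt{N} \bigm| \bT_N \equiv \bt_0\,\bigr].
\]
Summing over central classes and using the central window-probability bound above gives $\sqrt{N}\,\Prob[\phi(\bT_N) = t_N] \ge (1-\epsilon)(1-3\epsilon)\,C_0\,e^{-C_1 \tau_0^2}$; sending $\epsilon \to 0$ completes the proof. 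The main obstacle I foresee is the parameter bookkeeping: $\epsilon$, $R$, $\delta$, and the threshold on $N$ must be chosen in a careful order so that the companion lemma applies uniformly across all central classes at once, while the tail contribution is uniformly negligible. Once that tuning is done, the argument is essentially a line-by-line mirror of the proof of Lemma \ref{lemma:normallimsup}.
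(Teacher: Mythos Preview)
Your argument is correct and the bookkeeping concern you flag is real but resolvable: set $\delta$ explicitly as a function of $R$ (say $\delta = c\,\epsilon\sqrt{C_\bullet}/R$, which is what your companion lemma needs), and then the Hoeffding tail condition becomes $\exp(-R^2/2) \lesssim \epsilon^2/R$, a constraint on $R$ alone. The paper's proof takes a genuinely different route. It splits the $\equiv$-classes by a \emph{one-sided} criterion---whether the approximating Gaussian $g_{\bt}$ rises steeply on a short interval $J_1 = [\tau_0,\tau_0+\delta]$---rather than by the two-sided distance $|t_N - e(\bt_0)|$. For the bad (``type II'') classes it does \emph{not} bound the $J_1$-mass absolutely via Hoeffding; instead it observes that $g_{\bt}$ keeps rising across a chain of $k$ adjacent intervals $J_1,\dots,J_k$, so each such class puts at most an $\epsilon$-fraction of its $J_k$-mass into $J_1$, and the total $J_k$-mass is controlled by Theorem~\ref{theo:normal}. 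This relative comparison sidesteps the absolute-versus-relative mismatch that forces your careful ordering of $R$ and $\delta$. For the good classes the paper argues directly that $g_{\bt}$ is nearly flat on $J_1$, which plays the role of your companion binomial lemma. Your approach has the virtue of being symmetric and built entirely from off-the-shelf tools (Stirling and Hoeffding); the paper's comparison-window trick is slicker on the parameter side and avoids stating an auxiliary lemma.
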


\begin{proof}
For tilings of $\cR_N$, we consider the real variable
$\tau = \psi(\bt)/\sqrt{N} \in \RR$.
We know from Theorem \ref{theo:normal} that
the random variable $\psi(\bT)/\sqrt{N}$
converges in distribution to the gaussian
$g(\tau) = C_0 \exp(-C_1 \tau^{2})$ in the $\tau$-axis.
If we restrict ourselves to an equivalence class $[\bt]$
we have of course a binomial distribution.
Let $\tilde\beta = \beta(\bt)/N$ and
$\tilde e(\bt) = e(\bt)/\sqrt{N}$;
if we exclude a subset of $\cT(\cR_N)$ of very small measure
we may assume that $C_{\bullet} \le \tilde\beta \le 1/N_{\bullet}$.
The distribution of the random variable $\psi(\bT)/\sqrt{N}$
in the equivalence class $[\bt]$
thus also approaches (at least in distribution)
a gaussian
$g_{\bt}(\tau) = C_{0,\bt} \exp(-C_{1,\bt} (\tau-\tilde e(\bt))^{2})$
in the $\tau$-axis.
Notice that the logarithmic derivative 
\[ \frac{g'_{\bt}(\tau)}{g_{\bt}(\tau)}
= -2C_{1,\bt} (\tau - \tilde e(\bt)) \]
is an affine strictly decreasing function.
The condition $C_{\bullet} \le \tilde\beta \le 1/N_{\bullet}$
implies
$C_{1,\min} \le C_{1,\bt} \le C_{1,\max}$.
Here $C_{1,\min}, C_{1,\max}$ are constants:
more precisely, they are functions of $\cD$ only,
not of $N$ or of $\bt$.
Thus, the slopes of the logarithmic derivatives $g'_{\bt}/g_{\bt}$
are uniformly bounded
(excepting the set of very small measure
which we are consistently neglecting).

Assume without loss of generality that $\tau_0 \ge 0$.
Given $\epsilon \in (0,\frac{1}{1000})$,
choose $\lambda > 1$ such that
$\lambda^2 \in (1,1+\epsilon)$.
Choose $k \in \NN^\ast$, $k > 4$, with $\lambda^{(-k+2)/2} < \epsilon$.

If $\delta > 0$ is small then $g(\tau_0+\delta) > (1-\epsilon)\,g(\tau_0)$.
Also, if $\delta > 0$ is taken sufficiently small then
$C_{1,\min} \le C_{1,\bt} \le C_{1,\max}$ and
$g'_{\bt}(\tau)/g_{\bt}(\tau) > \log(\lambda)/\delta$
imply
$g'_{\bt}(\tau+(k-2)\delta)/g_{\bt}(\tau+(k-2)\delta) >
\log(\lambda)/(2\delta)$.
Choose such a small $\delta > 0$.

For $-2 \le i \le k$, set $\tau_i = \tau_0 + i\delta$
and $J_i = [\tau_{i-1},\tau_i] \subset \RR$.
From Theorem \ref{theo:normal}, for sufficiently large $N$ we have
\[ 1-\epsilon <
\frac{\Prob[\psi(\bT_N)/\sqrt{N} \in J_i]}{\int_{J_i} g(\tau) d\tau}
< 1+\epsilon. \]
For such $N$, consider the equivalence classes $[\bt]$
which satisfy $C_\bullet N \le \beta(\bt) \le N/N_\bullet$.
Such an equivalence class $[\bt]$ 
(and by extension, the tiling $\bt$) is of {\em type I} if 
$g_{\bt}(\tau_1)/g_{\bt}(\tau_0) \le \lambda$
and of {\em type II} otherwise.

If $[\bt]$ is of type II then
$g'_{\bt}(\tau_0)/g_{\bt}(\tau_0) > \log(\lambda)/\delta$
so that
$g'_{\bt}(\tau_{i-1})/g_{\bt}(\tau_{i-1}) > \log(\lambda)/(2\delta)$
for all $i \le k-1$.
We thus have
$g_{\bt}(\tau_i)/g_{\bt}(\tau_{i-1}) > \sqrt{\lambda}$
for all such $i$
and therefore
$g_{\bt}(\tau_{k-1})/g_{\bt}(\tau_1) > \lambda^{(k-2)/2} > 1/\epsilon$.
We thus have
\[ \int_{J_1} g_{\bt}(\tau) d\tau <
\epsilon\,\int_{J_k} g_{\bt}(\tau) d\tau. \]
It follows that, for each $[\bt_0]$ of type II,
\[ \left|\left\{ \bt \equiv \bt_0,
\tau_0 < \frac{\psi(\bt)}{\sqrt{N}} < \tau_1 \right\}\right|
< \frac{3\epsilon}{2}\,
\left|\left\{ \bt \equiv \bt_0,
\tau_{k-1} < \frac{\psi(\bt)}{\sqrt{N}} < \tau_k \right\}\right|. \]
Thus, for sufficiently large $N$,
\begin{gather*}
\left|\left\{ \bt \in \cT(\cR_N) \,|\,
\psi(\bt)/\sqrt{N} \in J_1, 
\bt \textrm{ of type II} \right\}\right|
< \qquad \qquad \\
< 2\epsilon
\left|\left\{ \bt \in \cT(\cR_N) \,|\,
\psi(\bt)/\sqrt{N} \in J_k, 
\bt \textrm{ of type II} \right\}\right| < \\
\qquad\qquad < 3\epsilon\,
|\cT(\cR_N)|\,\int_{J_k} g(\tau) d\tau
< 3\epsilon\,
|\cT(\cR_N)|\,\int_{J_1} g(\tau) d\tau. 
\end{gather*}
We therefore have
\[
\left|\left\{ \bt \in \cT(\cR_N) \,|\,
\psi(\bt)/\sqrt{N} \in J_1, 
\bt \textrm{ of type I} \right\}\right|
> (1-4\epsilon)\, 
|\cT(\cR_N)|\,\int_{J_1} g(\tau) d\tau. \]
Consider now a class $[\bt_0]$ of type I,
so that $g_{\bt_0}(\tau_0) \ge (1/\lambda)\,g_{\bt_0}(\tau_1)$
and  therefore $g_{\bt_0}(\tau_0) > (1-\epsilon) g_{\bt_0}(\tau)$
for all $\tau \in J_1 = [\tau_0,\tau_1]$.
We therefore have that, for sufficiently large $N$,
if $t \in \ZZ$, $\tau_0 \le t/\sqrt{N} \le \tau_1$ then
\[
\left|\left\{ \bt \in \cT(\cR_N) \,|\,
\bt \equiv \bt_0,
\psi(\bt) = t_N 
\right\}\right|
>
(1-2\epsilon)\,
\left|\left\{ \bt \in \cT(\cR_N) \,|\,
\bt \equiv \bt_0,
\psi(\bt) = t 
\right\}\right|
\]
and therefore
\[
\left|\left\{ \bt \in \cT(\cR_N) \,|\,
\bt \equiv \bt_0,
\psi(\bt) = t_N 
\right\}\right|
>
\frac{1-3\epsilon}{\delta\,\sqrt{N}}\,
\left|\left\{ \bt \in \cT(\cR_N) \,|\,
\bt \equiv \bt_0,
\frac{\psi(\bt)}{\sqrt{N}} \in J_1
\right\}\right|.
\]
Adding over all classes of type I we have
\begin{align*}
\left|\left\{ \bt \in \cT(\cR_N) \,|\,
\psi(\bt) = t_N 
\right\}\right|
&>
\frac{1-3\epsilon}{\delta\,\sqrt{N}}\,
\left|\left\{ \bt \in \cT(\cR_N) \,|\,
\frac{\psi(\bt)}{\sqrt{N}} \in J_1,
\bt \textrm{ of type I}
\right\}\right| \\
&>
\frac{1-8\epsilon}{\delta\,\sqrt{N}}\,
|\cT(\cR_N)|\,\int_{J_1} g(\tau) d\tau
> 
\frac{1-8\epsilon}{\sqrt{N}}\,
|\cT(\cR_N)|\,g(\tau_0).
\end{align*}
Since this estimate holds for any $\epsilon > 0$
we are done.
\end{proof}


\section{Proof of Theorems \ref{theo:components},
\ref{theo:limitprob} and \ref{theo:trit}}
\label{section:components}

The proof of Theorems \ref{theo:components},
\ref{theo:limitprob} and \ref{theo:trit} now mostly amount
to putting together previous results.
We begin by (essentially) restating Lemma 11.1 from \cite{regulardisk}.

\begin{lemma}
\label{lemma:cD}
\cite{regulardisk}
Let $\cD \subset \RR^2$ be a nontrivial quadriculated disk.
There exist $c_{\cD} \in \QQ$ and $d \in \RR$ with the properties below.
\begin{enumerate}
\item{If $\bt$ is a tiling of $\cR_N$ then $|\Tw(\bt)| \le c_{\cD} N$.}
\item{If $t \in \ZZ$, $|t| \le c_{\cD} N - d$ then
there exists a tiling $\bt$ of $\cR_N$ with $\Tw(\bt) = t$.}
\end{enumerate}
\end{lemma}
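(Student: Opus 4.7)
The plan is to translate the twist into a graph-theoretic extremal problem via the cocycle formalism of Section \ref{section:cocycle}. Fix the cocycle representative $\psi_\xi \in Z^1(\cC_\cD)$ from Example \ref{example:twist}, so that $\psi_\xi$ takes rational values and for every $\bt = (\emptyplug, f_1, \ldots, f_N, \emptyplug) \in \cT(\cR_N)$ one has
\[ \Tw(\bt) = \sum_{k=1}^N \psi_\xi(f_k). \]
Thus $\Tw(\bt)$ is the total weight of a closed walk of length $N$ based at $\emptyplug$ in the finite directed weighted multigraph $\cC_\cD$, where each floor is interpreted as an oriented edge.

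For item (1), I would set $M_N := \max_{\bt \in \cT(\cR_N)} \Tw(\bt)$. Concatenation at $\emptyplug$ yields superadditivity $M_{N_1+N_2} \ge M_{N_1} + M_{N_2}$, and the trivial pointwise bound $\Tw(\bt) \le N \max_f |\psi_\xi(f)|$ forces $M_N/N$ to stay bounded above. By Fekete's lemma, $c_\cD := \lim_{N\to\infty} M_N/N = \sup_N M_N/N$ is finite, which immediately gives $\Tw(\bt) \le c_\cD N$ for every tiling; the lower bound $\Tw(\bt) \ge -c_\cD N$ follows symmetrically. Rationality of $c_\cD$ is Karp's maximum-mean-cycle theorem applied to the finite rationally-weighted graph $\cC_\cD$: the supremum is attained on some directed cycle $C_+$, and equals the rational number $(\sum_{e \in C_+} \psi_\xi(e))/|C_+|$.

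For item (2), I would interpolate between a maximum-mean cycle $C_+$ and a minimum-mean cycle $C_-$. After prepending and appending short corks built from Lemma 3.1 of \cite{regulardisk}, we may assume both $C_+$ and $C_-$ are closed walks at $\emptyplug$, with lengths $q_\pm$ and twists $t_\pm$ satisfying $t_+/q_+ = -t_-/q_- = c_\cD$. Given a target $t \in \ZZ$ with $|t| \le c_\cD N - d$, the two-variable system $a q_+ + b q_- = N - N_0$ and $a t_+ + b t_- = t - r$ admits a nonnegative integer solution $(a,b)$ with bounded residuals $N_0, r$. Use $a$ copies of $C_+$ and $b$ copies of $C_-$ concatenated at $\emptyplug$; fill the residual length $N_0$ with vertical floors (which contribute $0$ to the twist); and absorb the residual twist $r$ using a bounded number of the unit-twist adjustment blocks $\bt_{\bullet,0}, \bt_{\bullet,1}$ introduced before Lemma \ref{lemma:normallimsup}. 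The accumulated bounded overhead defines $d$.

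The hard part is the bookkeeping of plug-matching: $C_\pm$, the vertical filler, and the $\bt_{\bullet,i}$ blocks each live naturally between specific plugs, so short bridges through $\emptyplug$ (again supplied by corks from Lemma 3.1 of \cite{regulardisk}) must be inserted between them without breaking the length or twist accounting. Keeping all of these bridges uniformly bounded in length, while still reaching every integer in $[-c_\cD N + d, c_\cD N - d]$, is what pins down a workable $d$; this is where most of the combinatorial care in the original proof of Lemma 11.1 goes.
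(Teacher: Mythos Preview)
The present paper does not prove this lemma: it is quoted as Lemma 11.1 of \cite{regulardisk}, with no argument given here, so there is nothing in this paper to compare your sketch against. Your Fekete--Karp route for item (1) is clean and correct: superadditivity of $M_N$ under concatenation at $\emptyplug$, boundedness by $N\max_f|\psi_\xi(f)|$, and rationality via the maximum-mean-cycle characterisation all go through.

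For item (2) there is one slip. Once you prepend and append corks to route $C_\pm$ through $\emptyplug$, the resulting closed walks at $\emptyplug$ no longer have mean weight exactly $\pm c_{\cD}$: a bridge $\gamma$ of length $\ell$ turns $C_+$ into the walk $\gamma\ast C_+\ast\gamma^{-1}$ of length $q_+ + 2\ell$ and twist still $t_+$ (bridge and reverse cancel), so the ratio drops strictly below $c_{\cD}$ unless $\ell=0$. Your linear system as written, with $t_+/q_+=c_{\cD}$ for the \emph{corked} walk, is therefore not available. What does work is to keep $q_\pm,t_\pm$ as the data of the bare optimal cycles, build the tiling as $\gamma_+\ast C_+^a\ast\gamma_+^{-1}\ast\gamma_-\ast C_-^b\ast\gamma_-^{-1}$ together with the $\bt_{\bullet,i}$ blocks and vertical filler, and push the fixed bridge lengths $2\ell_\pm$ and the block allotment into the residual $N_0$. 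The system then has determinant $q_+t_- - q_-t_+ = -2c_{\cD}q_+q_-\ne 0$, integer $(a,b)$ with bounded residuals exist, and nonnegativity of $a,b$ is exactly the constraint $|t|\le c_{\cD}N-d$. Your final paragraph already gestures at this bookkeeping; just drop the claim that the corked walks themselves realise the extremal mean.
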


\begin{proof}[Proof of Theorem \ref{theo:components}]
The first item of Theorem \ref{theo:components}
is a repetition of the first item of Lemma \ref{lemma:cD}.

For the second item, take $b = d+c_{\cD}M$
(where $M$ is as in Theorem \ref{theo:M} and
in the definition of fat components).
From Lemma \ref{lemma:cD}, there exists a tiling $\bt_0$ of $cR_{N-M}$
with $\Tw(\bt_0) = t$.
The tiling $\bt_0 \ast \bt_{\nvert,M}$ belongs
to a fat component $\cF_{N,t}$.

In order to prove uniqueness, consider a fat component $\tilde\cF$
with $\Tw(\tilde\cF) = t$.
By definition, there exists a tiling of the form
$\bt_1 \ast \bt_{\nvert,M} \in \tilde\cF$.
We have that $\bt_1$ is a tiling of $\cR_{N-M}$
with $\Tw(\bt_1) = \Tw(\bt_0)$. 
Since $\cD$ is regular, $\bt_0 \sim \bt_1$.
It follows from Theorem \ref{theo:M} that
$\bt_0  \ast \bt_{\nvert,M} \approx \bt_1  \ast \bt_{\nvert,M}$
and therefore $\tilde\cF = \cF_{N,t}$, as desired.
The third item is similar.

If $\bt$ belongs to a thin component,
we have $\nvert(\bt) < M$.
The estimate follows from Lemma \ref{lemma:manyverticalfloors}.
\end{proof}

\begin{proof}[Proof of Theorem \ref{theo:limitprob}]
From Theorem \ref{theo:truenormal},
the probability that $\Tw(\bT_0) = \Tw(\bT_1)$
is more than $(1-\epsilon) C_0^2/N$
(the probability that both twists equal $0$).
This is important because we will neglect much smaller probabilities.

Assume first that $K = \ker(\Tw) < G^{+}_{\cD}$ is finite
with $k = |K|$.
Choose (and keep) $N_0 \in 2\NN^\ast$ and tilings
$\bt_0, \ldots, \bt_{k-1}$ of $\cR_{N_0}$
representing the distinct elements of $K$.
In particular, $\Tw(\bt_i) = 0$ (for all $i$).
A {\em block} in a tiling $\bt$ is a translated copy
of one of the $\bt_i$
so that $\bt = \bt_{-} \ast \bt_{i} \ast \bt_{+}$.
By imitating Lemma \ref{lemma:manyverticalfloors},
the probability that a random tiling $\bT$ does not include
at least one block is $o(c^N)$ (for some $c \in (0,1)$)
and therefore negligible.
We define an equivalence relation:
$\bt \equiv \tilde\bt$ if and only if
$\bt = \bt_{-} \ast \bt_{i} \ast \bt_{+}$,
$\tilde\bt = \bt_{-} \ast \bt_{j} \ast \bt_{+}$,
and $\bt_{-}$ admits no block.
In other words, $\bt \equiv \tilde\bt$
if and only if $\tilde\bt$ is otained from $\bt$
by replacing the first block $\bt_i$
by some other block $\bt_j$.
With the exception of tilings which admit no block,
all equivalence classes have size $k$.
Notice that $\bt \equiv \tilde\bt$ implies
$\Tw(\bt) = \Tw(\tilde\bt)$.

Assume that $\bT_0$ has been selected first.
In each $\equiv$-equivalent class with $\Tw(\cdot) = \Tw(\bT_0)$
there exists precisely one value of $\bT_1$
with $\bT_0 \sim \bT_1$.
We already know that the probability
that $\bT_0 \sim \bT_1$ and $\bT_0 \not\approx \bT_1$
is negligible.
The desired probability tends to $1/k$, as claimed.

Assume now that $K$ is infinite.
Consider $k \in \NN^\ast$ and select $N_0$
and tilings $\bt_0, \ldots, \bt_{k-1}$ of $\cR_{N_0}$
representing $k$ distinct elements of $K$.
Define the equivalence relation and equivalence classes as above.
In each equivalence class there exists at most
one value of $\bT_1$ with $\bT_0 \sim \bT_1$.
This implies that the $\limsup$ of the desired probability
is at most $1/k$.
Since this holds for any $k$,
the limit exists and is equal to $0$, as claimed.
\end{proof}

We do not know if there exists a quadriculated disk $\cD$
which is not regular but for which $K$ is finite.

\begin{proof}[Proof of Theorem \ref{theo:trit}]
Take $\tilde b = b - 4c$.
The tiling $\bt_0$ is constructed as follows.
The first $4$ (or $2$) floors contain a copy of
one of the two tilings in the left half of Figure \ref{fig:trit2}
(the other dominoes being vertical).
The remaining $N-4$ floors 
contain a tiling in $\cF_{N-4,t_0}$.
Notice that Theorem \ref{theo:components}
(and the choice of $\tilde b$)
guarantee that such a tiling exists.
The tiling $\bt_1$ is obtained by performing the trit
indicated in the left half of Figure \ref{fig:trit2}.
The two resulting tilings are in $\cF_{N,t_0}$ and $\cF_{N,t_0+1}$,
as desired;
indeed, we may assume they have $M$ vertical floors
(where $M$ is as in Theorem \ref{theo:M}
and as in the definition of a fat component).

The giant component $\cG$
includes all fat components $\cF_{N,t}$, $|t| \le cN - \tilde b$,
and all tiling with $M+4$ vertical floors.
The estimate on the size of the complement follows
from Lemma \ref{lemma:manyverticalfloors}.
\end{proof}

\begin{figure}[ht]
\begin{center}
\includegraphics[scale=0.27]{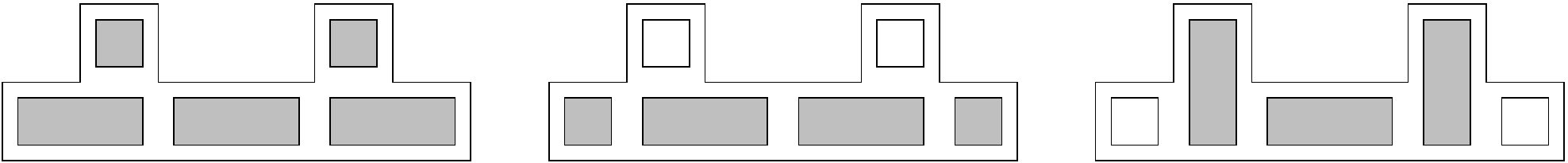}
\end{center}
\caption{A tiling which admits no flips or trits.}
\label{fig:notrit}
\end{figure}

There appear to be examples
where the giant connected component $\cG$
under flips and trits is actually equal to $\cT(\cR_N)$.
This is true for the boxes $[0,3]^2 \times [0,2]$ and $[0,4]^3$.
This is not true for the rather special
region shown in Figure \ref{fig:notrit}
(from \cite{segundoartigo}).
It would be interesting to clarify when we have $\cG = \cT(\cR_N)$.


\section{Final remarks}
\label{section:final}

The most obvious open question is Conjecture \ref{conj:main}.
In the introduction, it is formulated for boxes,
but, if true, there are probably similar statements
for regions of other shapes.
Also, the set $\cT(\cR)$ of tilings should have
fat connected components $\cF_{t} \subset \cT(\cR)$
for different values of the twist: $\Tw(\cF_t) = t$.

It would also be interesting to have a better understanding
of the thin connected components,
both in the case of cylinders and for more general regions.
There is some experimental evidence which hints
that they should be very small.

For cylinders, a  better understanding of which quadriculated disks
are regular would be very helpful.
A study of the irregular cases would also be desirable.

The main results of this paper give us significant information
concerning the distribution of $\Tw(\bT)$
for $\bT$ a random tiling of a cylinder.
Many natural questions remain unanswered, however.
Is the sequence $\Prob[\Tw(\bT) = t]$ unimodal?

The question which is easiest to formulate
is whether flips and trits make $\cT(\cR)$ connected
for $\cR$ a box.



\medskip

\noindent
\footnotesize
Departamento de Matem\'atica, PUC-Rio \\
Rua Marqu\^es de S\~ao Vicente, 225, Rio de Janeiro, RJ 22451-900, Brazil \\
\url{saldanha@puc-rio.br}

\end{document}